\numberwithin{equation}{section}
\theoremstyle{plain}
\newtheorem{lemma}{Lemma}[section]
\newtheorem{corollary}[lemma]{Corollary}
\newtheorem{proposition}[lemma]{Proposition}
\newtheorem{remark}[lemma]{Remark}
\newtheorem{definition}[lemma]{Definition}
\newtheorem{theorem}[lemma]{Theorem}
\newtheorem{ansatz}[lemma]{Ansatz}
	\newcommand{\be}{\begin{equation}}
	\newcommand{\ee}{\end{equation}}
	\newcommand{\bes}{\begin{equation*}}
	\newcommand{\ees}{\end{equation*}}
	\newcommand{\bpm}{\begin{pmatrix}}
	\newcommand{\epm}{\end{pmatrix}}
	\newcommand{\qtext}[1]{\quad \mbox{ #1 } \quad}
	\newcommand{\dron}[2]  {\frac{\partial#1   }{\partial#2}}
	\newcommand{\modu}[1]{\left\vert#1 \right\vert}
	\newcommand\eps   {{\varepsilon}}
	\newcommand*\bigcdot{\mathpalette\bigcdot@{.6}}
	\newcommand*\bigcdot@[2]{\mathbin{\vcenter{\hbox{\scalebox{#2}{$\m@th#1\bullet$}}}}}
       \newcommand{\degre}{^{\degree}}
\DeclareMathOperator{\Tr}{Tr}	
\DeclareMathOperator{\Det}{Det}	
\def\gns@setupmathcomp{%
\expandafter\ifx\csname tcmu\endcsname\relax
  \DeclareSymbolFont{gns@font}{TS1}{\familydefault}{m}{n}
  \ifx\mv@bold\@undefined\else
    \SetSymbolFont{gns@font}{bold}{TS1}{\familydefault}{\bfdefault}{n}
  \fi
\DeclareMathSymbol{\tcdegree}{\mathord}{gns@font}{176}       
}
\def\gns@usetcsymbols{%
\DeclareRobustCommand{\degre}{%
  \ifmmode\tcdegree\else\textdegree\fi}
 
}
\definecolor{myGreen}{RGB}{0, 200, 0}
		\newcommand\cA{{\mathcal A}}
		\newcommand\cB{{\mathcal B}}
		\newcommand\cBt{{\tilde{\cB}}}
		\newcommand\fC{{\mathfrak C}}
		\newcommand\Dt  {{\tilde D }}
		\newcommand\rd {{\mathrm d }}
		\newcommand\rH  {{\mathrm H }}
		\newcommand\rHt  {\tilde{\rH }}
		\newcommand\cH  {{\mathcal H }}
		\newcommand\rK  {{\mathrm K }}
		\newcommand\rM  {{\mathrm M }}
		\newcommand{\NN}{{\mathbb N}}
		\newcommand\cO{{\mathcal O} }
		\newcommand\Qt  {\tilde{ Q }}
		\newcommand{\RR}{{\mathbb{R}}}
		\newcommand\rR  {{\mathrm R }}
		\newcommand{\TT}{{\mathbb T}}
		\newcommand\wt {{\tilde{w} }}
		\newcommand\ry{{\mathrm y}}
		\newcommand{\ZZ}{{\mathbb Z}}
		\newcommand\gam   {\gamma  }
		\newcommand\Gam   {\Gamma  }
		\newcommand\Gamh{{\hat{\Gam}}}
		\newcommand\lam   {\lambda }
		\newcommand\psit{\tilde{\psi}}
\def\UDelaunay{\Upsilon^{\mathrm{Del}}}
\def\UPoincare{\Upsilon^{\mathrm{Poi}}}
\def\OmegaM{\Omega_{\mathrm{M}}}
\def\iM{i_{\mathrm{M}}}
\def\fC{\varphi}
\def\fCP{\varphi_{\mathrm{CP}}}
\def\fCPE{\widehat{\varphi}_{\mathrm{CP}}}
\def\fiM{\psi}
\def\fiMC{\psi^{i_{\mathrm{M}}}}
\newcommand{\wJ}{\widetilde{J}}
\newcommand{\wOmegaM}{\widetilde{\Omega}_{\mathrm{M}}}
\newcommand{\whOmegaM}{\widehat{\Omega}_{\mathrm{M}}}
\newcommand{\prim}{\mathrm{pri}}
\newcommand{\secn}{\mathrm{sec}}
\newcommand{\hor}{\mathrm{h}}
\newcommand{\ver}{\mathrm{v}}
\newcommand{\Jres}{J_{\mathrm{res}}}
\def\al{\alpha}
\def\eps{\varepsilon}
\def\RR{\mathbb R}
\def\ZZ{\mathbb Z}
\def\NN{\mathbb N}
\def\PP{\mathcal P}
\def\CCC{\mathcal C}
\def\DDD{\mathcal D}
\def\NNN{\mathcal N}
\def\MM{\mathcal M}
\def\FF{\mathcal F}
\def\JJ{\mathcal J}
\def\GG{\mathcal G}
\def\II{\mathcal I}
\def\RRR{\mathcal R}
\def\TT{\mathbb T}
\def\TTT{\mathcal T}
\def\OO{\mathcal O}
\def\KK{\mathcal K}
\def\cH{\mathcal H}
\def\HH{\mathcal H}
\def\tR{\mathtt R}
\def\tD{\mathtt D}
\def\~{\tilde}
\def\de{\delta}
\def\wt{\widetilde}
\def\wh{\widehat}
\def\eps{\varepsilon}
\def\pa{\partial}
\def\unst{\mathrm{u}}
\def\sta{\mathrm{s}}
\def\inner{\mathrm{in}}
\def\outer{\mathrm{out}}
\def\sl{m}
\def\gg{s}
\def\CP{\mathrm{CP}}
\def\AV{\mathrm{AV}}
\def\Saros{\mathrm{Saros}}
\def\PM{\Pi}
\title{On the Arnold diffusion mechanism in Medium Earth Orbit} 
\author[1]{Elisa Maria Alessi}
\author[2,5]{Inmaculada Baldom\'a}
\author[3]{Mar Giralt}
\author[4,5]{Marcel Guardia}
\affil[1]{IMATI-CNR, Istituto di Matematica Applicata e Tecnologie informatiche ``E. Magenes'', Consiglio Nazionale delle Ricerche, Via Alfonso Corti 12, 20133 Milano, Italy}
\affil[2]{Departament de Matem\`atiques \& IMTECH, Universitat Polit\`ecnica de Catalunya, Diagonal 647, 08028 Barcelona, Spain}
\affil[3]{IMCCE, CNRS, Observatoire de Paris, Universit\'e PSL, Sorbonne Universit\'e, 77
Avenue Denfert-Rochereau, 75014 Paris, France}
\affil[4]{Departament de Matem\`atiques i Inform\`atica, Universitat de Barcelona, Gran Via, 585, 08007 Barcelona, Spain}
\affil[5]{Centre de Recerca Matem\`atica, Campus de Bellaterra, Edifici C, 08193 Barcelona, Spain}
\date{\today}
\begin{document}

\maketitle 

 \renewcommand{\thefootnote}{\roman{footnote}}
 \footnotetext[0]{\emph{E-mail adresses:} 
 \href{mailto:em.alessi@mi.imati.cnr.it}{em.alessi@mi.imati.cnr.it} (E.M. Alessi),
\href{mailto:immaculada.baldoma@upc.edu}{immaculada.baldoma@upc.edu} (I. Baldom\'a), 
\href{mailto:mar.giralt@obspm.fr}{mar.giralt@obspm.fr} (M. Giralt),
\href{mailto:guardia@ub.edu}{guardia@ub.edu} (M. Guardia).}
 \renewcommand{\thefootnote}{\arabic{footnote}}

\begin{abstract}
Space debris mitigation guidelines represent the most effective method to preserve the circumterrestrial environment. Among them, end-of-life disposal solutions play a key role. In this regard, effective strategies should be conceived not only on the basis of novel technologies, but also following an advanced theoretical understanding.
A growing effort is devoted to exploit natural perturbations to lead the satellites towards an atmospheric reentry, reducing the disposal cost, also if departing from high-altitude regions. In the case of the Medium Earth Orbit region, home of the navigation satellites (like GPS and Galileo), the main driver is the gravitational perturbation due to the Moon, that can increase the eccentricity in the long term. In this way, the pericenter altitude can get into the atmospheric drag domain and the satellite can eventually reenter.

In this work, we show how  an Arnold diffusion mechanism  can trigger the eccentricity growth. Focusing on the case of Galileo, we consider a hierarchy of Hamiltonian models, assuming that the main perturbations on the motion of the spacecraft are the oblateness of the Earth and the gravitational attraction of the Moon. First, the Moon is assumed to lay on the ecliptic plane and periodic orbits and associated stable and unstable invariant manifolds are computed for various energy levels, in the neighborhood of a given resonance. Along each invariant manifold, the eccentricity increases naturally, achieving its maximum at the first intersection between them. This growth is, however, not sufficient to achieve reentry. By moving to a more realistic model, where the inclination of the Moon is taken into account, the problem becomes non-autonomous and the satellite is able to move along different energy levels. Under the ansatz of transversality of the stable and unstable manifolds in the autonomous case, checked numerically, Poincar\'e-Melnikov techniques are applied to show how the Arnold diffusion can be attained, by constructing a sequence of homoclinic orbits that connect invariant tori at different energy levels on the normally hyperbolic invariant manifold. 

\end{abstract}

\tableofcontents
\section{Introduction}

One of the key actions to mitigate the space debris problem and ensure a sustainable exploitation of the circumterrestrial environment is to implement effectively end-of-life procedures. While for the Low Earth Orbit and the Geosynchronous protected regions, there exist well-defined guidelines \cite{IADC}, for the Medium Earth Orbit (MEO) region still a discussion is ongoing. This is due to the fact that the MEO region is very broad (in principle, it goes from an altitude of 2000 km up to the geostationary altitude) and not yet critical in terms of population density. 

The MEO region is mostly known because it is where the satellites of the Global Navigation Satellite Systems (GNSS) orbit, namely, GPS, Galileo, GLONASS and Beidou-M \cite{Alessi2014}. They cover a range of semi-major axis between about $25500$ km (GLONASS) and $29600$ km (Galileo) and their nominal inclination is $55\degre\pm 2\degre$ (GPS and Beidou-M), $56\degre\pm 2\degre$ (Galileo), $65\degre\pm 2\degre$ (GLONASS) (see, e.g., \cite{Rossi2008, Alessi2016} and references herein). In the recent ESA's zero debris policy \cite{ESA0debris}, the GNSS orbits are defined as ``valuable orbits'' and the possibility of extending the protection to this region is introduced.
Given the high altitude, so far at the end of life the GNSS satellites were either left in the operational orbit or re-orbited by a given amount \cite{Alessi2014}. The accumulation of non-operational satellites in a limited region eventually will lead to a critical situation in terms of potential collisions and thus fragmentation. For this reason,  Jenkin and Gick \cite{Jenkin2005} proposed to dilute the collision probability by increasing the orbital eccentricity of the satellites at the end of life. Indeed, the GNSS inclination and altitude are such that in the long term the third-body perturbation, that is, the gravitational perturbation exerted by Sun and Moon, could lead to a natural eccentricity growth up to reentry, if a suitable initial orbital orientation is chosen. This mechanism was proven numerically by several authors (see, e.g., \cite{Rossi2008, Radtke2015, Alessi2016, Armellin2018, Gondelach2019, Pellegrino2021, Pellegrino2022}), but an exhaustive theoretical explanation of the underlying mechanism is still missing. In particular, the singular resonance hypothesis, that is, an integrable model, cannot explain the maximum value of eccentricity that can be observed by considering a full dynamical model, nor the Saros periodicity detected.

More precisely, following \cite{Kaula1962} the disturbing function corresponding to the gravitational perturbation due to a third body can be written as a series expansion depending on semi-major axis, eccentricity, inclination of both the satellite and the third body, and a periodic term involving the longitude of the ascending node, the argument of perigee and the mean anomaly of both the satellite and the third body. By doubly-averaging the periodic terms (over the orbital period of the satellite and the orbital period of the Moon), and considering only the first-order effect, we define the secular Hamiltonian.\footnote{In what follows, this step will be omitted because it can be found in several past works (see references above).} 

A possible way to deal with the secular Hamiltonian is to assume that only one periodic term is dominant at a time, in particular, when its argument is resonant. Most of the past works\footnote{It is beyond the scope of this work to provide a review of all the past investigations on the subject. Here, we recall only the works that are relevant to our contribution.} focus on such ``isolated resonance hypothesis'' and on the eccentricity growth associated with an inclination-dependent resonance, involving the argument of the pericenter $\omega$ and the longitude of the ascending node of the satellite~$\Omega$. For quasi-circular orbits at MEO altitudes, and assuming the Galileo inclination ($\approx 56\degre$), the dominant resonance is $2\dot\omega+\dot\Omega\approx 0$\footnote{For GLONASS the inclination is $\approx 63\degre$ and thus the dominant resonance is $2\dot\omega$.}. As a first approximation, $\omega$ and $\Omega$ change in time because of the Earth's oblateness, but the third body perturbation has also an effect.


The authors in \cite{Rosengren2015, Daquin2016} assumed the isolated resonance hypothesis and brought forward the concept that the eccentricity growth that ensures a reentry is due to a chaotic behavior, that occurs when two or more resonances overlap. The Chirikov criterion is mainly proven by detailed Fast Lyapunov Indicators (FLI) maps that show the location of the chaotic regions.  The phase space associated with each possible resonance of the secular dynamics was investigated also in~\cite{Lei2022}, but in this case, the authors remarked the role of the Laplace plane.
More recently, the authors in~\cite{Gkolias2019,Daquin2022,Legnaro2023} proposed the idea that the high eccentricity growth is due, not to the chaos generated by overlapping resonances, rather to the normally hyperbolic invariant manifold (NHIM) associated with the given resonance and through an Arnold diffusion mechanism. They showed, with refined FLI maps and an {\it ad hoc} Hamiltonian derived for circular orbits in the neighborhood of the resonance, that what triggers the phenomenon is the variation of a given integral of motion. They also emphasized the role of the Laplace plane for the motion on the invariant tori.
Some of the invariant objects ``necessary'' to achieve Arnold diffusion were already identified in~\cite{ Daquin2022,Legnaro2023}. These papers also showed, numerically, hyperbolicity and unstable motions along the resonance. 


%
The goal of the present paper is to explain the mechanism that creates the drift and how to construct drifting orbits across the neighborhood of an isolated resonance. 
In particular, we focus our study on the resonance $2\omega + \Omega$ with the Galileo satellites values. Nonetheless, our approach is fairly general and can be adapted for other resonances and values.
In other words, we follow the idea that it is the NHIM the cause that we have to investigate, and we will show how it is possible to construct homoclinic connections that ``connect'' different energy levels on the NHIM associated with the $2\omega+\Omega$ resonance. Moreover, we will show how to construct orbits that shadow (follow closely) this sequence of homoclinic orbits. These shadowing orbits achieve a drift in eccentricty large enough so that the satellite renters the Earth's atmosphere. 

We will focus only on the third-body effect exerted by the Moon, thus neglecting the Sun, and we will apply a perturbative approach with respect to the inclination of the Moon on the ecliptic plane~$\iM$ (which has a real value of $\iM = 5.15^{\circ}$). More details on this will be given in the next subsection. 
Notice that the model we consider is a rather simplified version of the initial problem. However, we expect that the outcome of this work will pave the way for the analysis of a more realistic model, that can be used to define effective end-of-life solutions.


The paper is structured as follows. In Section \ref{sec:secularHam} we define the secular Hamiltonian and provide formulas for it. Then, we state the main result of this paper. In Section \ref{sec:coordinates} we introduce a ``good'' system of coordinates which captures the timescales of the model. In Section \ref{section:geometry}, relying on what we call hierarchy of models, we explain how the Arnold diffusion mechanism takes place in the secular Hamiltonian. 
Finally, we also provide definitions for the main tools we use: a normally hyperbolic invariant manifold and the associated scattering maps. In Section \ref{sec:coplanar} we analyze the dynamics for the coplanar model (that is, taking $i_\rM=0)$. We use this analysis to describe, in Section \ref{sec:fullham}, the dynamics for the full secular Hamiltonian with $i_\rM>0$ small enough.

\subsection{An Arnold diffusion mechanism}

V. Arnold in 1964 (see \cite{arnold1964instability}) showed, in  a cleverly chosen model,  that actions can vary drastically in nearly integrable Hamiltonian systems (of at least 2 and a half degrees of freedom, that is a phase space of at least dimension 5). Then, he conjectured that such behavior, nowadays called Arnold diffusion, should be typical. In (nearly integrable) physical models, it is  expected that Arnold diffusion is a fundamental mechanism leading to transport in phase space. Such transport is achieved by drifting along resonances.

The understanding of Arnold diffusion mechanisms has had outstanding progress in the last decades, relying on a wide variety of techniques:  the original geometric approach by Arnold, which has been deeply developed in  \cite{Bolotin:1999, DelshamsLS00, DelshamsLS06b, delshams2006biggaps, DelshamsH05, Gelfreich:2008, delshams2019instability, GelfreichTuraev2017},  variational methods \cite{ChengY04, Cheng:2017}, topological tools \cite{gidea2006topological,ClarkeFG23},  the so-called separatrix map \cite{Treschev04, Treschev:2012} or a combination of different approaches \cite{Kaloshin:2016, Kaloshin:2020}.

As stated before, the goal of this paper is to explain how an Arnold diffusion mechanism can enhance drift in eccentricity along the $2\omega+\Omega$ resonance. The mechanism we propose relies on geometric tools in the spirit of the seminal work by Arnold. They are explained in detail in Section \ref{sec:mathtools}.

Let us mention here just the main ingredients. We show that, along the resonance, there exists a normally hyperbolic invariant cylinder (see Definition \ref{def:nhim} below). 
The stable and unstable manifolds of this cylinder intersect transversally. Thus, we can construct trajectories that, following closely a sequence of homoclinic orbits to the cylinder, achieve a considerable drift in eccentricity. Such drift allows the satellite, by flattening its osculating ellipse, to enter the Earth's atmosphere.

To perform such analysis, we assume that the inclination of the Moon $i_\rM$ with respect to the ecliptic plane is small enough. This allows us to use perturbative arguments to construct the ``highways'' that lead to the drift. Note that it is fundamental that $i_\rM>0$, because in the coplanar case, namely, when $i_\rM=0$, there are not enough dimensions to achieve Arnold diffusion.

Even if our tools rely on the fact that $i_\rM>0$ is small enough, we expect that the same mechanism takes place for a realistic value of $i_\rM$, because, indeed, it is expected that transport phenomena are even stronger and more robust in far from integrable Hamiltonians.
Nevertheless, in that setting, one cannot combine analytical and numerical techniques to describe the cylinder and its invariant manifolds, as we do in the present paper. Instead, one has to describe them fully numerically. In this work, numerical simulations are done for a lower dimensional model (what we call the coplanar Hamiltonian) and can be kept ``simple''. Otherwise, it would require to compute numerically high dimensional objects in the 5-dimensional phase space.
Moreover, when $i_\rM$ increases other resonances could come to play also an important role and an resonance overlapping could take place. 

Finally, note that, for applications, it is fundamental to know the speed of such transport mechanisms. For arbitrarily small $i_\rM>0$ the mechanism is rather slow (the drifting time is $T\sim 1/i_\rM$). However, for a realistic value of $i_\rM$ it is expected to be faster. Moreover, one may expect that combining Arnold diffusion mechanism with maneuvers could speed up considerably the drifting.

\section*{Acknowledgements}

E.M. Alessi acknowledges support received by the project entitled ``{co-orbital motion and
three-body regimes in the solar system}",
funded by Fondazione Cariplo through the program ``{Promozione dell'attrattivit\`a e competitivit\`a
dei ricercatori su strumenti
dell'European Research Council -- Sottomisura rafforzamento}".

I. Baldom\'a has been supported by the grant PID-2021-
122954NB-100 funded by the Spanish State Research Agency through the programs
MCIN/AEI/10.13039/501100011033 and “ERDF A way of making Europe”.

M. Giralt has been supported by the European Union’s Horizon 2020 research and innovation programme under the Marie Sk\l odowska-Curie grant agreement No 101034255.
M. Giralt has also been supported by the research project PRIN 2020XB3EFL ``Hamiltonian and dispersive PDEs".

M. Guardia has been supported by the European Research Council (ERC) under the European Union's Horizon 2020 research and innovation programme (grant agreement No. 757802). 
M. Guardia is also supported by the Catalan Institution for Research and Advanced Studies via an ICREA Academia Prize 2019. 

This work is also supported by the Spanish State Research Agency, through the Severo Ochoa and Mar\'ia de Maeztu Program for Centers and Units of Excellence in R\&D (CEX2020-001084-M).


\section{The secular Hamiltonian system and the main result}\label{sec:secularHam}


Let us consider a spacecraft that is affected by the gravitational attraction of the Earth, the perturbation due to the Earth's oblateness and the lunar gravitational perturbation. In the geocentric equatorial reference system, the motion of the spacecraft takes place on an ellipse, that is described by the orbital elements semi-major axis $a$, eccentricity $e$, inclination $i$, longitude of the ascending node $\Omega$ and argument of pericenter $\omega$. The ellipse changes in time due to the perturbations. The orbit of the Moon is defined in the geocentric ecliptic reference system by the corresponding orbital elements $(a_\rM, e_\rM, i_\rM, \Omega_\rM, \omega_\rM)$, where $a_\rM = 384400 \,\mbox{km}$, $e_\rM =0.0549006$, $i_\rM	=5.15\degre$, while the longitude of the ascending node of the Moon with respect to the ecliptic plane varies approximately linearly with time in a period $\TTT_{\Omega_\rM}$ of 1 Saros (about 6585.321347 days~\cite{aR73, eP91}) due to the solar gravitational perturbation, namely,
\begin{equation}\label{eq:OmegaM}
    \Omega_\rM(t)=	\Omega_{\rM,0} + n_{\Omega_\rM}t, \quad \quad \quad n_{\Omega_\rM}=2\pi/\TTT_{\Omega_\rM},
\end{equation}
where $\Omega_{\rM,0}$ is the longitude of ascending node of the Moon at a given epoch.

Since we consider the secular model (the system averaged over the mean anomalies of both the satellite and the moon), the semi-major axis $a$ is a constant of motion which, in the case of Galileo, corresponds to $a=29600$ km.
Let us define
 $$\alpha	=	a /a_\rM,$$
which characterizes the Earth-satellite distance with respect to the Earth-Moon distance.

Then, in Delaunay action-angle variables\footnote{Recall that in celestial mechanics, these are the classical canonical variables, defined as 	\be \label{def:LGHlgh}
		\begin{aligned}
				L	&=	\sqrt{\mu a},		&\quad			l	&=	M,		\\		
				G	&=	L\sqrt{1-e^2},		&\quad			g	&=	\omega,	\\
				H	&=	G\cos i,			&\quad			h	&=	\Omega.	
		\end{aligned}
	\ee},
 the secular dynamics is described by the Hamiltonian \cite{Daquin2016}
\begin{equation}\label{def:fullHamiltonian}
\mathtt{H}(L,G,H, g, h, \OmegaM;\iM)=    \rH_\rK(L)	
		+	\rHt_{0}(L, G, H) +	\alpha^3 \rHt_1(L,G,H, g, h, \OmegaM;\iM),
\end{equation}
where
\begin{equation*} 
\rH_\rK(L)=	-\frac{1}{2}\frac{\mu^2}{L^2}
\end{equation*}
 is the constant term associated with the Earth's monopole, being $\mu=398600.44 \,\mbox{km}^3/\mbox{s}^2$ the mass parameter of the Earth, and
 \be \label{def:initialHamiltonians_J2}
\rHt_0(L, G, H)	=\frac{1}{4}\frac{\rho_0}{L^3} \frac{G^2	-3H^2}{G^5}
\ee
is the perturbative term associated with the Earth's oblateness,  averaged over the orbital period of the spacecraft, being $\rho_0=\mu^4J_2 R^2$, with $J_2 = 1.08 \times 10^{-3}$ the coefficient of the second zonal harmonic in the geopotential and $R = 6378.14\,\mbox{km}$ the mean equatorial radius of the Earth. 
The secular perturbative term due to the Moon is instead
\begin{equation*} 
\begin{split}
\rHt_1	&(L,G,H, g, h, \OmegaM; \iM)
\\=&-\frac{\rho_1}{L^2} 
\sum_{m=0}^2 \sum_{p=0}^2 \Dt_{m,p}(L,G,H) \sum_{s=0}^2
c_{m,s}F_{2,s,1}(i_\rM) 
\\&\times \left[U_2^{m,-s}(\epsilon)
\cos\left(\psit_{m, p, s}(g,h,\OmegaM)\right) +	U_2^{m,s}(\epsilon)\cos\left(\psit_{m, p, -s}(g,h,\OmegaM)\right)
\right].
\end{split}
\end{equation*}
The function $U_2^{m, \mp s}(\epsilon)$ corresponds to the Giacaglia function with $\epsilon =	23.44\degre$ (see Table~\ref{tab:U} in Appendix~\ref{appendix:HamiltonianDelaunay}) being the obliquity of the ecliptic with respect to the equatorial plane.
Also, one has that
\be \label{def:Dtilde}
\Dt_{m,p}(L, G,H)	=	\tilde{F}_{m,p}(G,H)\tilde{X}_{p}(L,G),
\ee
where $\tilde{F}_{m,p}(G,H)$ is a function of the Kaula's inclination functions $F_{2, m, p}(i)$\footnote{Since $H=G\cos i$, one has that $\tilde{F}_{m,p}(G,H)= F_{2,m,p} (\arccos\frac{H}{G}).$}, 	see~\cite{Kaula66}, and $\tilde{X}_{p}(L,G)$ is a function of the zero-order Hansen coefficient $X_0^{2,2-2p}(e)$\footnote{Since $G=L\sqrt{1-e^2}$, one has that $\tilde{X}_p(L,G) = X_0^{2,2-2p}\left(\sqrt{1-\frac{G^2}{L^2}} \right)$.} , see~\cite{Hughes1980}, and
\be 
\label{def:PsiTilde}
\psit_{m,p, \sigma}(g,h, \OmegaM)=	2(1 -p)g	
+m h + \sigma\left(\Omega_\rM-\frac{\pi}{2}\right)	- 	\ry_{\modu{\sigma}}\pi, 
\ee  
 where $\sigma= \pm s$
 defines the relative orientation satellite-Moon.
 In addition, the other variables are constants defined as
 \begin{equation}\label{def:auxiliaryFunctionsOriginal}
 \begin{aligned}
 \rho_1 	&= &	\frac{\mu\mu_\rM}{(1	-	e_\rM^2)^{3/2}},\quad \quad \quad  \quad \quad  \quad  \quad  \quad
 &\eps_n	&=& 	\left\{
		\begin{array}{ll}
		1		&	\mbox{if $n=0$}\\
		2		&	\mbox{if $n\neq 0$}
		\end{array}\right.,\\
c_{m,s}	 & = &	(-1)^{\lfloor m/2\rfloor} \frac{\eps_m\eps_s}{2}\frac{(2-s)!}{(2 + m)!},\quad \quad \quad ~
	&\ry_{|s|}	&=&	\left\{
		\begin{array}{ll}
		0		&	\mbox{if $s$ is even}\\
		1/2		&	\mbox{if $s$ is odd}
		\end{array}\right. ,
 \end{aligned}
\end{equation}
 where $\mu_\rM=	4902.87 \,\mbox{km}^3/\mbox{s}^2$ is the mass parameter of the Moon.

\begin{remark}\label{rem:auton}
	By Kaula's inclination functions (see \cite{Kaula66}), one obtains that
	\begin{align} \label{eq:KaulaInclinationMoon}
		F_{2,s,1}(\iM) 
		=	
		\left\{\begin{array}{ll}
			-\frac12+\frac34\sin^2 \iM		&	\mbox{if $s=0$,}\\[0.5em]
			-\frac32 \sin \iM \cos \iM 				& 	\mbox{if $s=1$,}\\[0.5em]
			\frac32 \sin^2 \iM			& 	\mbox{if $s=2$.}
		\end{array}
	\right.
	\end{align}
    We remark that when $i_\rM=0$, the inclination function $F_{2,s,1}(0)=0$ for all~$s$, except for $s=0$ which is $F_{2,0,1}(0)=-1/2$.
    In other words, assuming that the Moon lies on the ecliptic plane, the Hamiltonian $\mathtt{H}$ in \eqref{def:fullHamiltonian} is autonomous, because the angle $\psit_{m,p,s}(g,h, \OmegaM)$ does not depend on $\OmegaM$ for $s=0$. 
\end{remark}
 
\begin{remark}\label{rmk:unities}
In the numerical computations that will be presented throughout the text, all the variables will be taken in non-dimensional units defined in such a way that the semi-major axis $a$ of the orbit of the Gallileo satellites (equal to 29600 km) is the unit of distance and the corresponding orbital period is 2$\pi$.     
\end{remark}

The main result of the paper is the following. It assumes several ans\"atze that are stated below and are verified numerically.

\begin{theorem}\label{thm:main}
Consider the secular Hamiltonian $\mathtt{H}$ in \eqref{def:fullHamiltonian} with the parameters just fixed, take
\begin{equation}\label{def:alpha}
\alpha= 0.077
\end{equation}
and assume that the Ans\"atze \ref{ansatz:periodic}, \ref{ansatz:phaseshiftouter} and \ref{ansatz:straightening} are satisfied. 

Then, for $i_\rM>0$ small enough, there exist a time $T>0$ and 
a trajectory $z(t)=(L(t),G(t),H(t), g(t), h(t), \OmegaM(t))$ such that
\[
\mathtt{H}(z(0))=1.7\cdot 10^{-8} \qquad \text{and}\qquad \mathtt{H}(z(T))=1.3\cdot 10^{-6}.
\]
Moreover, along the energy increase the  osculating eccentricity and inclination satisfy
\[
e(0)= 0\qquad  \text{and}\qquad e(T)=0.78,
\]
and
\[
i(t)\in [56.06\degre,58.09\degre] \qquad \text{for}\qquad t\in [0,T].
\]
\end{theorem}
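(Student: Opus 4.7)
The plan is to exploit the hierarchy of models described in the introduction. First, I would set $\iM=0$ and work with the coplanar Hamiltonian, which by Remark \ref{rem:auton} is autonomous and effectively of two and a half degrees of freedom (one angle becomes cyclic because only the $s=0$ terms survive). After introducing the adapted coordinate system of Section \ref{sec:coordinates} that isolates the $2\omega+\Omega$ resonance and separates fast from slow variables, one reduces to a two-degree-of-freedom autonomous system in the resonant zone. Near the resonance I would construct a family of hyperbolic periodic orbits, parameterized by the energy $\mathtt{H}$, which together form a two-dimensional normally hyperbolic invariant cylinder $\Lambda_0$ with two-dimensional stable and unstable manifolds; this is the content of Ansatz \ref{ansatz:periodic}.

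The second step is to establish a transverse homoclinic channel to $\Lambda_0$ in the coplanar model (Ansatz \ref{ansatz:phaseshiftouter}), thereby defining an \emph{outer} (scattering) map. In the autonomous coplanar setting, the scattering map preserves $\mathtt{H}$, so no drift in energy is possible, but its action on the remaining slow variable is nontrivial and can be read off from a Melnikov-type phase shift. Turning on $\iM>0$ breaks autonomy and introduces the new angle $\OmegaM(t)$ from \eqref{eq:OmegaM}. The NHIM persists as a three-dimensional cylinder $\Lambda_{\iM}$ in the extended phase space, together with $\iM$-close perturbations of its stable and unstable manifolds. Using Poincar\'e--Melnikov theory, I would compute to first order in $\iM$ the variation of $\mathtt{H}$ under the perturbed scattering map: the Melnikov potential is an integral of the $\iM$-dependent piece of $\mathtt{H}$ along unperturbed homoclinic trajectories, and Ansatz \ref{ansatz:straightening} provides the non-degeneracy needed to straighten the inner dynamics on $\Lambda_{\iM}$ and extract an $O(\iM)$ energy change per application of the outer map.

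Having inner and outer maps whose combined action produces $O(\iM)$ jumps in $\mathtt{H}$, I would construct a pseudo-orbit on $\Lambda_{\iM}$ joining the initial energy level $\mathtt{H}=1.7\cdot 10^{-8}$ to the final level $\mathtt{H}=1.3\cdot 10^{-6}$ by iterating the scattering map interspersed with segments of inner motion. A standard shadowing argument in the spirit of Delshams--de la Llave--Seara then promotes this pseudo-orbit to a genuine orbit $z(t)$ of the full Hamiltonian, whose total transit time is $T=O(1/\iM)$. The osculating eccentricity and inclination bounds follow from the Delaunay identities $e=\sqrt{1-G^2/L^2}$ and $i=\arccos(H/G)$ evaluated along the orbit: on $\Lambda_{\iM}$ the slow action conjugate to $2\omega+\Omega$ remains pinned close to the resonant value, so $i(t)$ stays in a narrow band around $\approx 56\degre$, while the eccentricities of the periodic orbits foliating $\Lambda_{\iM}$ sweep from $e=0$ to $e=0.78$ as the energy traverses the prescribed window.

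The main obstacle will be controlling the interaction between the drift produced by the scattering map and the large KAM tori of the inner dynamics restricted to $\Lambda_{\iM}$: one must show that the Melnikov-driven $O(\iM)$ energy jumps are large enough (or properly oriented) to cross the Cantor-like family of invariant tori that obstruct diffusion in the inner map, which is precisely what Ansatz \ref{ansatz:straightening} is designed to rule out. A secondary difficulty is the numerical verification that the coplanar stable and unstable manifolds intersect transversally and that the osculating inclination stays within the narrow band $[56.06\degre,58.09\degre]$ along every homoclinic excursion, not merely on the NHIM itself. Once these points are settled, the coplanar analysis of Section \ref{sec:coplanar} and the perturbative construction of Section \ref{sec:fullham} should combine to yield the theorem.
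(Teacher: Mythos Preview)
Your overall architecture matches the paper's: build the NHIM for the coplanar Hamiltonian from the family of hyperbolic periodic orbits, use Fenichel persistence for $\iM>0$, compute the first-order (in $\iM$) inner and scattering maps via Melnikov integrals, build a drifting pseudo-orbit, and shadow it. This is exactly the route of Sections~\ref{sec:coplanar}--\ref{sec:fullham}.

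There is, however, a genuine geometric misunderstanding in how you extract the eccentricity statement. The periodic orbits foliating the cylinder $\Lambda_0$ (and $\Lambda_{\iM}$) all sit at $\{\eta=\xi=0\}$, i.e.\ at $e=0$; they are the \emph{circular} orbits of Theorem~\ref{thm:coplanar_periodic_orbits}. The drift in energy along the cylinder therefore never changes the eccentricity of points \emph{on} the NHIM. What changes with energy is the \emph{amplitude of the homoclinic excursion}: by Item~3 of Ansatz~\ref{ansatz:periodic}, the homoclinic orbit $\chi_E$ at the top energy level reaches $\max_t e(\chi_E(t))\geq 0.795$, while at the Galileo energy it only reaches $\approx 0.35$. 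The value $e(T)=0.78$ in the theorem is attained while the shadowing orbit is out along a homoclinic loop at high energy, not while it sits near the cylinder. Your sentence ``the eccentricities of the periodic orbits foliating $\Lambda_{\iM}$ sweep from $e=0$ to $e=0.78$'' is incorrect and would leave you unable to conclude $e(T)=0.78$.

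Two smaller points. First, you have swapped the roles of the ans\"atze: the transverse homoclinic intersection is Item~2 of Ansatz~\ref{ansatz:periodic}, not Ansatz~\ref{ansatz:phaseshiftouter}; the latter guarantees that the primary scattering map has no low-order resonance at the unique inner resonance $J=\Jres$ (needed for the averaging in Lemma~\ref{lemma:averagingouter}), and Ansatz~\ref{ansatz:straightening} is the non-vanishing of the combined Melnikov quantity $f^*_\pm$ that ensures the transition chain of Lemma~\ref{lemma:transitionchain}. Second, the paper does \emph{not} obtain $T=O(1/\iM)$: the shadowing is done via Theorem~\ref{lemma:shadowingCPAM} (Gidea--de la Llave--Seara), which gives no time control; the $O(1/\iM)$ bound is stated only as an expectation in Remark~\ref{rmk:time}.
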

Note that such evolution would not be possible if $i_\rM=0$ since in that case the Hamiltonian is autonomous and therefore the energy is a first integral (recall Remark~\ref{rem:auton}).

Concerning the evolution of the eccentricity, we note that it is already oscillating when $i_\rM=0$. However, the size of this oscillations depends on the energy value. Indeed, the orbit that we obtain in Theorem \ref{thm:main} is such that at the initial time, assuming $i=56.06\degre$ the eccentricity is oscillating (approximately) between 0 and 0.35, whereas at the final time $T$ is oscillating between 0 and 0.78. Therefore, at the initial time the satellite is far from the Earth's atmosphere whereas at the final time the satellite is reentering.

Theorem \ref{thm:main} does not provide any estimate on the time $T>0$ needed to achieve the drift in eccentricity. As mentioned before, more quantitative shadowing arguments compared to those used in the present paper (see, for instance \cite{treschev02, Piftankin06}) should lead to estimates for $T=T(\iM)$ of the form
\[
|T|\leq \frac{C}{\iM},
\]
for some constant $C>0$ independent of $\iM>0$ (see Remark \ref{rmk:time} below).

Finally, in what follows, we will omit the Keplerian term of the Hamiltonian $\rH_{\rK}$, since it does not contribute to the variation of the orbital elements.
As well, we omit the dependence of the Hamiltonian on the variable $L$ since it is a constant of motion.

\section{A good system of coordinates
}\label{sec:coordinates}

Let us start by focusing on the region of the phase space where an  orbit satisfies the ``$2g+h$ resonance'' in the unperturbed problem, defined by the Hamiltonian $ \rHt_0$ in \eqref{def:initialHamiltonians_J2} (i.e., $\alpha = 0$), which is integrable.


	%
  
The resonance is the set of points in action space where the condition $\dot{x}= 0$ holds, where $x=2g+h$ is the resonant angle. This is satisfied if the orbital inclination is equal to $i_\star\simeq 56.06\degre$ in the prograde case or to $i_\star\simeq 110.99\degre$ in the retrograde case.
Indeed, according to the equations of motion, the resonance occurs when
\[
5H^2 - G^2 - HG	=	0,
\]
that is, for all $G\neq 0$ (i.e. $e<1$) and $H=G\cos i_\star$ (see~\eqref{def:LGHlgh}) that satisfy
\begin{equation*}
5 X^2 - 1 - X = 0, \quad \quad \quad X = \cos i_\star.
\end{equation*}
Hence, we can distinguish two situations: 
\begin{itemize}
\item the prograde case for 
$i_{\star}	=	\arccos\left(\frac{1 + \sqrt{21}}{10}\right)	\simeq 56.06\degre$,
\item the retrograde case for 
$i_{\star}	=	\arccos\left(\frac{1 - \sqrt{21}}{10}\right)	\simeq 110.99\degre$.
\end{itemize}

In what follows, $i_\star$ will be mentioned as the inclination of the (exact) $2g+h$-resonance and we will focus on the prograde case.

The unperturbed Hamiltonian highlights that $x$ is constant for $i =i_\star$, while it circulates for $\modu{i-i_\star}>0$. Moreover, in a small enough neighborhood of the (exact) resonance the angular variables evolve at different rates: $g$ and $h$ are ``fast'' angles compared to $x$ which undergoes a ``slow'' rotation of $\cO(i-i_\star)$.
 
\subsection{Slow-fast coordinates \texorpdfstring{$(y,x)$}{(y,x)}}\label{sec:slowfast}

Instead of using the Delaunay action-angle variables, in order to take advantage of the timescales separation, we introduce the transformation
$
{(G, H, g, h) = \UDelaunay (y, \Gam, x, h) }
$
 given by
\begin{equation}\label{eq:xy}  
x = 2g + h, \qquad \qquad 
y = \frac{G}2, \qquad \qquad \Gam = H-\frac{G}2 
\end{equation}
and the symplectic form
$
\rd x 	\wedge	\rd y
+	\rd	h	\wedge	\rd \Gam.
$

Notice that several symplectic transformations are possible, however we prefer the one such that the resonant action $y$ does not depend on the inclination\footnote{Even if we focus our analysis on the $2g+h$ resonance, our approach is fairly general. A similar change of coordinates can be considered for a different resonance. What is important is to choose the angle $x$ as the one corresponding to the slow dynamics. The other changes of variables are chosen so that the transformation is symplectic.}. 
Hence, the action-angle variables $(y,x)$ are associated with the variation in eccentricity.

  In slow-fast variables, the Hamiltonian of the full problem can be written as
\begin{equation*}
		\rH(y,\Gam, x, h, \OmegaM;\iM) = \rH_{0}(y, \Gam)	+	\alpha^3 \rH_1 (y,\Gam, x, h, \OmegaM; \iM)	
\end{equation*}
where
\begin{equation*}
\begin{split}
\rH_0(y , \Gam)	&=	(\rHt_0 \circ \UDelaunay)(y,\Gam)
= \frac{\rho_0}{128}\frac{y^2  -6y\Gam- 3\Gam^2}{L^3y^5} 
\end{split}
\end{equation*}
and
\be 
\label{def:hamiltonianDelaunayH1}
\begin{split}
\rH_1&(y , \Gam, x, h, \OmegaM;\iM)
=(\rHt_1 \circ \UDelaunay) (y , \Gam, x, h, \OmegaM;\iM).
\end{split}
\ee
See Appendix~\ref{appendix:HamiltonianDelaunay} and, in particular, equation~\eqref{eq:expansionH1Delaunay} for a detailed expansion of the functions involved in the expression of the Hamiltonian $\rH_1$.

Notice that, in the unperturbed problem ($\alpha = 0$), $\Gamma$ and $y$  are first integrals of the problem, because the Hamiltonian does not depend on the angles $h$ and $x$, while in the full problem ($\alpha>0$), the phase space is no more integrable. Moreover, in the integrable case once fixed $a$ and $e$, the dynamics can be catalogued as a function of the inclination.


In the coordinates just defined, the $2g + h$–resonance becomes the $x$–resonance. In the unperturbed problem, the action space associated with the resonance is defined by the condition
	\bes	
\dot{x}=\dron{\rH_0(y, \Gam)}{y}	= \frac{3}{128}\frac{\rho_0}{L^3}\frac{   5\Gam^2 + 8y\Gam - y^2}{y^6} = 0.
	\ees
 In other words, the resonance can take place on the two lines 
	\begin{equation}\label{def:resonance}
		\begin{split}
		\Gam	=	y\frac{-4 + \sqrt{21}}{5}	 \quad \text{and} \quad
		\Gam	=	y\frac{-4 - \sqrt{21}}{5},	 
		\end{split}
	\end{equation}
with $(y, \Gam) \neq (0,0)$, that are associated with prograde and retrograde orbits, respectively.

\subsection{Poincar\'e coordinates \texorpdfstring{$(\eta,\xi)$}{(eta,xi)}}
\label{sec:poincare}

The next step is to study the dynamics in the neighborhood of a circular orbit, that is for small values of $e>0$.
However, the slow-fast variables derived from the Delaunay variables (as happens with the original Delaunay variables) are singular at {$e=0$}. In order to overcome this difficulty, we introduce the set of Poincar\'e coordinates
$
	{(y, \Gam, x, h) = \UPoincare (\eta, \Gam, \xi, h)  }
$
	where
	\begin{equation}\label{eq:xieta}
		\xi		=	\sqrt{2L	-	4y}\cos \left(\frac{x}2\right), \qquad \qquad \qquad
		\eta	=	\sqrt{2L	-	4y}\sin \left(\frac{x}2\right),
	\end{equation}
which are symplectic. 	Notice that $\xi$  and $\eta$ are respectively equivalent to 
		$e\cos (x/2)$ and
		$e\sin (x/2)$ for quasi-circular orbits,
that is when $e\approx 0$.

In this set of coordinates, the Hamiltonian of the full problem can be written as
\begin{equation}\label{def:Ham:H:Poinc}
\HH(\eta,\Gam, \xi, h, \OmegaM;\iM) = \HH_{0}(\eta, \Gam,\xi)	+	\alpha^3 \HH_1(\eta,\Gam, \xi, h, \OmegaM;\iM)
\end{equation}
where
\begin{equation}\label{def:hamiltonianPoincareH0}
\begin{aligned}
\HH_{0}(\eta, \Gam,\xi)
&=
(\rH_0 \circ \UPoincare)(\eta, \Gam,\xi) \\
&=
\frac{\rho_0}{2} 
\frac{(2L-\xi^2-\eta^2)^2 - 24(2L-\xi^2-\eta^2) \Gam - 48\Gam^2}{L^3(2L-\xi^2-\eta^2)^5}
\end{aligned}
\end{equation}
and
\begin{equation} \label{def:hamiltonianPoincareH1}
	\HH_1(\eta , \Gam, \xi, h, \OmegaM;\iM) 	=	(\rH_1 \circ \UPoincare) (\eta , \Gam, \xi, h, \OmegaM;\iM).
\end{equation}
See Appendix~\ref{appendix:HamiltonianPoincare} and, in particular, equation~\eqref{eq:expansionH1Poincare} for a detailed expansion of the functions involved in the expression of Hamiltonian $\HH_1$.

\section{The hierarchy of models and dynamical systems tools} 
\label{section:geometry}

We devote this section to explain the geometric framework that will lead to the drifting orbits. To this end, it is convenient to autonomize the Hamiltonian. Let us recall equation~{\eqref{eq:OmegaM}} for the time variation of $\OmegaM\in\TT$, namely, 
\begin{equation*}
    \Omega_\rM(t)=	\Omega_{\rM,0} + n_{\Omega_\rM}t.
\end{equation*}
We introduce $\Omega_\rM$ as a variable and  define its symplectic conjugate variable $J$. Then, we define the 3-degree-of-freedom Hamiltonian
\begin{equation}\label{def:HamExtPhsSp}
\KK(\eta, \Gam, J, \xi, h, \Omega_\rM;i_\rM)=\HH (\eta, \Gam, \xi, h, \Omega_\rM;\iM)+n_{\Omega_\rM}J,
\end{equation}
where $\HH$ is the Hamiltonian introduced in \eqref{def:Ham:H:Poinc}.

We analyze this Hamiltonian in two steps, relying on the smallness of $i_\rM$. In Section~\ref{section:hierarchy}, we explain the strategy of this analysis. In this section, we also introduce the $h$-averaged Hamiltonian. We \emph{will not rely} on it to construct our diffusion mechanism. However, we introduce it since it has been widely  studied numerically in literature (see, e.g., \cite{Daquin2022}).  and is a convenient simplified model to use as a first step in certain numerical studies of the full problem. In the paper \cite{AlessiBGGP23}, we perform a detailed analytic study of the $h-$averaged Hamiltonian, and we describe its equilibrium points and their stability. 

%

In Section \ref{sec:mathtools}, we introduce the two main tools that will create the ``highway'' of unstable orbits: a normally hyperbolic invariant cylinder and the associated scattering maps.

\subsection{The hierarchy of models}\label{section:hierarchy}

To construct the intermediate models, we rely on the fact that the model depends on two parameters: the inclination of the Moon with respect to the ecliptic plane $i_\rM$ and the ratio between the semi-major axis of the satellite and the one of the Moon $\al$.

In our analysis,  we consider $i_\rM$ arbitrarily small - the perturbative parameter -  and a realistic value for $\al$ (see Theorem \ref{thm:main}). Still, the smallness of $\al$ creates different timescales that may be taken into account.


\paragraph{Main reduction: the Coplanar Model}

Since we are assuming that the inclination of the Moon is small, the main reduction that one can do to have an intermediate model is to take $i_\rM=0$. We refer to this model as the Coplanar Model since it corresponds to assuming that the orbit of the Moon is coplanar to that of the Earth.
When doing this reduction, the Hamiltonian $\KK$ in \eqref{def:HamExtPhsSp} becomes $\Omega_\rM$ independent, see Remark~\ref{rem:auton}.

Starting from \eqref{def:HamExtPhsSp}, we define
\begin{equation}\label{def:KHam:CP}
\begin{split}
\KK_\CP(\eta, \Gam, J, \xi, h)=\KK(\eta, \Gam, \xi, h, \OmegaM;0)
=\HH (\eta, \Gam, \xi, h, \Omega_\rM;0)+n_{\Omega_\rM}J,
\end{split}
\end{equation}
where the subscript $\CP$ stands for coplanar. Since $\KK_\CP$ is $\Omega_\rM$-independent,  $J$ is a first integral. In fact, one can work in the reduced phase space and consider the 2-degree-of-freedom Hamiltonian
\begin{equation}\label{def:Ham:Coplanar} 
\HH_\CP(\eta, \Gam, \xi, h)=\HH(\eta, \Gam, \xi, h, \OmegaM;0).
\end{equation}
This Hamiltonian can be written as
\[
\HH_\CP(\eta, \Gam, \xi, h)= \HH_{0}(\eta, \Gam,\xi)	+	\alpha^3 \HH_{\CP,1}(\eta,\Gam, \xi, h),
\]
where $\HH_0$ is the Hamiltonian introduced in~\eqref{def:hamiltonianPoincareH0} and $\HH_{\CP,1}$ is the Hamiltonian $\HH_1$ in~\eqref{def:hamiltonianPoincareH1} with $i_\rM=0$, that is,
\begin{equation*}
\HH_{\CP,1}(\eta,\Gam, \xi, h)=
\HH_1(\eta,\Gam,\xi,h,\OmegaM;0).
\end{equation*}
See Appendix~\ref{appendix:HamiltonianPoincare} and, in particular \eqref{eq:expressionPoincareHCP1}, for the explicit expression of $\HH_{\CP,1}$.

\paragraph{A possible further reduction: the $h$-averaged problem}%

The departing point of our analysis is the coplanar Hamiltonian in~\eqref{def:Ham:Coplanar} above. However, in  literature extensive investigations have been based on the $h$-averaged coplanar model. The reduction to the $h$-averaged model is based on the fact that, if $\alpha$ is a small parameter, the autonomous Hamiltonian $\HH_\CP$ has a timescale separation between the slow and fast angles, respectively $x$ and $h$. Indeed,  
\begin{equation*}
\dot \xi,\dot \eta\sim \alpha^3 \qquad \text{whereas}\qquad \dot h\sim 1.
\end{equation*}
A classical way to exploit this feature is to simplify the Hamiltonian $\HH_\CP$  by another one in which the fast oscillations have been removed by
 averaging over the longitude of the ascending node $h$. That is, 
\[	\HH_\AV(\eta,\Gam, \xi)	=	\frac{1}{2\pi} \int_0^{2\pi}\HH_\CP(\eta, \Gam, \xi, h)\rd h.\]
	We refer to this Hamiltonian  as the  $h$-averaged Hamiltonian.
Note that since the Hamiltonian $\HH_0$ in 	\eqref{def:hamiltonianPoincareH0} is $h$-independent, $\HH_\AV$ can be written as 
\begin{equation*}
\HH_\AV(\eta,\Gam, \xi)	=	\HH_{0}(\eta,\Gam,\xi)	+	 \frac{\alpha^3}{2\pi} \int_0^{2\pi}\HH_{\CP,1}	(\eta,\Gam, \xi, h)\rd h.
\end{equation*} 
Notice that $\HH_\AV$ is a 1-degree-of-freedom Hamiltonian provided that $\Gamma$ is a first integral of the $h$-averaged system. 

\subsubsection{Theoretical results for the coplanar and \texorpdfstring{$h$}{h}-averaged system}\label{sec:theoretical_averaged_coplanar}

The mechanism we use to show the existence of drifting orbits is the Arnold diffusion mechanism (see Section~\ref{sec:mathtools} and~\ref{sec:Arnold} below). A key point in the analysis is to study the relative position between the invariant manifolds of the hyperbolic periodic orbits of the coplanar model. 

In this work, we use circular periodic orbits, that is, the periodic orbits located at $(\eta,\xi)=(0,0)$ (see~\eqref{eq:xieta}).  Indeed, let us recall that $\HH_{\CP}(\eta,\Gam,\xi,h)$ is a $2$-degrees-of-freedom autonomous Hamiltonian that can be expressed as $\HH_{\CP}=\HH_0 + \al^3\HH_{\CP, 1}$ with $\HH_0$ as given in~\eqref{def:hamiltonianPoincareH0} and  $\HH_{\CP, 1}$ in~\eqref{eq:expressionPoincareHCP1}.
From the explicit expression of the Hamiltonian obtained in Appendix~\ref{appendix:HamiltonianPoincare}, one can easily see that it does not have linear terms with  respect to $(\eta,\xi)=(0,0)$.
This implies that $(\eta,\xi)=(0,0)$ is invariant. Then, to analyze the circular periodic orbits, it is enough to look for periodic solutions of 
the $1$-degree-of-freedom Hamiltonian $\HH_{\CP}(0,\Gam,0,h)$.
In Appendix \ref{appendix:HamiltonianPoincare} we provide formulas for this Hamiltonian (see Lemma~\ref{lemma:periodicOrbit}).


In~\cite{AlessiBGGP23}, we prove the existence of periodic orbits at $\{\eta=\xi=0\}$ when the semi-major axis $a \in [a_{\min},a_{\max}]$ with 
$$
a_{\max}=30000\, \mathrm{km}, \qquad a_{\min} =6378.14\, \mathrm{km }\; \text{(to avoid collision)}.
$$
We also introduce $L_{\min,\max}=\sqrt{\mu a_{\min,\max}}$.
\begin{theorem}\label{thm:coplanar_periodic_orbits}
For any $L\in [L_{\min},L_{\max}]$, we define
$$
\mathbf{E}_1 (L) = {\HH}_\CP(0,0.49 L,0,\pi), \qquad 
\mathbf{E}_2(L) = {\HH}_\CP (0,0,0,0).
$$
Then, for any energy level $E\in [\mathbf{E}_{1}(L),\mathbf{E}_{2}(L)] $, there exists a periodic orbit of the form $\PP_E(t)=(0, {\Gam}_E(t),0,h_E(t))$ such that, $h_E(0)=0$ and, for $t\geq 0$, $\dot{h}_E(t)\neq 0$, $\Gam_E(t) \in [0, 0.49 L]$ and
$$
\HH_\CP(0, {\Gam}_E(t),0,h_E(t))= E.$$  

In addition, for the case of Galileo, namely $L=1$ in non-dimensional units (which corresponds to $a=29600$ km see Remark~\ref{rmk:unities}), one has that
$$
\mathbf{E}_1(1)=-2.515161379204321 \cdot 10^{-5}, \qquad \mathbf{E}_2(1)= 2.477266122798186\cdot 10^{-6}.
$$
\end{theorem}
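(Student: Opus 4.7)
The plan is to exploit the observation already highlighted in the statement: because $\HH_\CP$ has no linear terms in $(\eta,\xi)$ at the circular set, the two submanifold equations
\[
\partial_\eta \HH_\CP(0,\Gam,0,h)=0, \qquad \partial_\xi \HH_\CP(0,\Gam,0,h)=0
\]
hold identically, so $\{\eta=\xi=0\}$ is invariant under the Hamiltonian flow. The first step will therefore be to verify this claim carefully by inspecting the explicit expansion of $\HH_{\CP,1}$ given in Appendix~\ref{appendix:HamiltonianPoincare} (in particular~\eqref{eq:expressionPoincareHCP1}) and noting that the Keplerian/oblateness part $\HH_0$ in~\eqref{def:hamiltonianPoincareH0} depends on $\xi,\eta$ only through $\xi^2+\eta^2$. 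Once this is done, the restriction to $\{\eta=\xi=0\}$ reduces the problem to the $1$-degree-of-freedom autonomous system with Hamiltonian $\mathfrak{h}(h,\Gam):=\HH_\CP(0,\Gam,0,h)$ on the cylinder $\TT\times\RR$, whose explicit formula is provided by Lemma~\ref{lemma:periodicOrbit}.

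Next, the goal is to analyze the level sets $\{\mathfrak{h}=E\}$ for $E\in [\mathbf{E}_1(L),\mathbf{E}_2(L)]$ and prove that they contain a connected component which is a graph $\Gam=\Gam_E(h)$ over all of $\TT$ (i.e.\ a rotational periodic orbit), with values in $[0,0.49L]$. To this end, I would first note that the unperturbed part $\HH_0(0,\Gam,0)$ is a smooth function of $\Gam$ alone and the perturbation $\alpha^3 \HH_{\CP,1}(0,\Gam,0,h)$ is uniformly small on compact sets of $L$ in $[L_{\min},L_{\max}]$. Using the implicit function theorem in $\Gam$, applied to $\mathfrak{h}(h,\Gam)=E$ fixing $h$, one obtains the existence of the graph $\Gam=\Gam_E(h)$ provided that $\partial_\Gam \mathfrak{h}(h,\Gam_E(h))\neq 0$ on the whole circle. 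This last condition precisely says that $\dot h_E(t)=\partial_\Gam \mathfrak{h}\neq 0$, so the orbit is a true rotation in $h$, and it is equivalent to requiring that $E$ be outside the libration/separatrix zones of $\mathfrak{h}$.

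The crucial step is then to identify the admissible energy window. The value $\mathbf{E}_2(L)=\mathfrak{h}(0,0)$ is the energy of the circular equatorial configuration, and $\mathbf{E}_1(L)=\mathfrak{h}(\pi,0.49L)$ is chosen to stay below the first appearance of librating level sets coming from the fixed points of $\mathfrak{h}$ inside the strip $\Gam\in[0,0.49L]$. I would therefore verify, using the explicit formula for $\mathfrak{h}$ and a direct analysis of $\partial_\Gam\mathfrak{h}=0$, that in the rectangle $(h,\Gam)\in \TT\times[0,0.49L]$ the function $\mathfrak{h}$ has no critical points for $E$ in the interval $[\mathbf{E}_1(L),\mathbf{E}_2(L)]$ and that $\mathbf{E}_1(L)<\mathbf{E}_2(L)$ for all $L\in[L_{\min},L_{\max}]$; this last inequality is a smooth, one-parameter check in $L$ and reduces to a monotonicity property of $\mathfrak{h}(0,\cdot)-\mathfrak{h}(\pi,\cdot)$ that follows from the structure of $\HH_0$ and of the dominant Fourier modes of $\HH_{\CP,1}$ in $h$.

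The last step is to set the phase convention $h_E(0)=0$ (which is just the choice of initial time on the orbit) and to compute the boundary values $\mathbf{E}_1(1)$, $\mathbf{E}_2(1)$ by direct evaluation of $\mathfrak{h}$ from its closed-form expression, using the normalization $L=1$ and the physical constants listed in Remark~\ref{rmk:unities}. The main obstacle I anticipate is not the invariance or the implicit-function argument, which are essentially structural, but the quantitative verification that the rectangle $\TT\times[0,0.49L]$ indeed contains no equilibria of $\mathfrak{h}$ for every $L\in[L_{\min},L_{\max}]$ and that the separatrices emanating from the equilibria outside that rectangle do not enter it for $E\in[\mathbf{E}_1(L),\mathbf{E}_2(L)]$; this calls for the more detailed phase-portrait analysis carried out in the companion work~\cite{AlessiBGGP23}, which I would quote rather than reproduce.
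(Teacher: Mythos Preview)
Your sketch is correct and matches the approach the paper indicates: the paper does not actually prove Theorem~\ref{thm:coplanar_periodic_orbits} here but delegates it to the companion work~\cite{AlessiBGGP23}, after recording exactly the two structural facts you use, namely that $\HH_\CP$ has no linear terms at $(\eta,\xi)=(0,0)$ (so the circular set is invariant) and that the restricted $1$-degree-of-freedom Hamiltonian $\mathfrak h(h,\Gam)=\HH_\CP(0,\Gam,0,h)$ has the explicit form of Lemma~\ref{lemma:periodicOrbit}. Your implicit-function-theorem argument for the rotational level curves, driven by $\partial_\Gam\mathfrak h\neq 0$, is precisely the mechanism behind $\dot h_E\neq 0$; note that from Lemma~\ref{lemma:periodicOrbit} the unperturbed part $\partial_\Gam\HH_0(0,\Gam,0)=-\tfrac{3\rho_0}{4L^8}(L+2\Gam)$ is bounded away from zero on $[0,0.49L]$, so the quantitative check you flag as the main obstacle reduces to bounding the $\alpha^3$-perturbation uniformly in $L\in[L_{\min},L_{\max}]$, which is what the companion paper carries out.
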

This result ensures the existence of periodic orbits but does not give information about the character of them. A first theoretical approach for solving this problem is to study the character of the equilibrium point $(\eta,\xi)=(0,0)$ for the $h$-averaged system and to consider the coplanar Hamiltonian as a perturbation with respect to $\alpha$ of $\HH_\AV$. Since in this work we deal with a realistic value of $\alpha=0.077$ (see~\eqref{def:alpha}), this perturbative approach is not useful in our case. However it gives an insight about the scenario we can encounter. In~\cite{AlessiBGGP23}, the following result about $\HH_\AV$ is proven.
\begin{theorem} \label{thm:averaged_results}
There exist two functions ${\Gamma}_{1}, {\Gamma}_{2}: [L_{\min},L_{\max}] \to \left (0,\frac{L}{2}\right ) $ such that, for $L\in [L_{\min}, L_{\max}]$:
\begin{itemize}
    \item If either $\Gamma\in (0, \Gamma_1(L) )$ or $\Gamma\in \left (\Gamma_2(L), \frac{L}{2} \right )$, then $(0,0)$ is a center of $\HH_\AV$. 
    \item If $\Gamma\in (\Gamma_1(L),\Gamma_2(L))$, then $(0,0)$ is a saddle of $\HH_\AV$.
    \item If $\Gamma=\Gamma_1(L)$ or $\Gamma=\Gamma_2(L)$, the origin is a degenerated equilibrium point of $\HH_\AV$. 
\end{itemize}
In addition, $L^{-1} \Gamma_1(L) \in \left (0,\frac{\sl}{2}\right )$ and $L^{-1} \Gamma_2(L) \in \left (\frac{\sl}{2},\frac{1}{2}\right )$, with $\sl:=\frac{-4+\sqrt{21}}{5}$ (i.e. the slope of the prograde resonance line in~\eqref{def:resonance}).
 \end{theorem}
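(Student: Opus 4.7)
The plan is to classify the equilibrium $(\eta,\xi)=(0,0)$ of $\HH_\AV$ (at fixed $L,\Gamma$) by computing its Hessian and tracking the sign of its determinant as $\Gamma$ varies in $(0,L/2)$. Since $(\eta,\xi)=(0,0)$ is invariant under the flow of $\HH_\CP$ (as noted before Theorem~\ref{thm:coplanar_periodic_orbits}) and $\HH_\AV$ is obtained by averaging $\HH_\CP$ in $h$, the origin is automatically a critical point in $(\eta,\xi)$, so no linear terms arise.

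First I would expand $\HH_\AV$ to quadratic order in $(\eta,\xi)$. The oblateness piece $\HH_0$ in~\eqref{def:hamiltonianPoincareH0} depends on $(\eta,\xi)$ only through $\xi^2+\eta^2$, so its contribution to the Hessian at the origin is a scalar matrix $\tfrac{\rho_0}{L^3}F'(0)\,I$, where $F'(0)$ is an explicit polynomial of degree two in $\Gamma$ with coefficients in $L$; a short computation shows that $F'(0)$ vanishes exactly on the line $\Gamma/L=\sl/2$, in agreement with the location of the prograde resonance identified in Section~\ref{sec:slowfast}. For the $\alpha^3$-perturbation, I would use the $\psit_{m,p,s}$ in~\eqref{def:PsiTilde} together with $g=(x-h)/2$ to write every angular factor as $\cos\bigl((1-p)x+(m-1+p)h+s(\OmegaM-\pi/2)-\ry_{|s|}\pi\bigr)$; the hypothesis $\iM=0$ kills every $s\neq 0$ via the inclination factors in Remark~\ref{rem:auton}, and averaging in $h$ retains only the pairs $(m,p)$ with $m-1+p=0$, namely $(0,1)$ and $(1,0)$. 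The $(0,1)$ term is $x$-independent, hence rotationally symmetric in $(\eta,\xi)$, while the $(1,0)$ term is proportional to $\cos x$ and, via the identity $(\xi^2+\eta^2)\cos x=\xi^2-\eta^2$ combined with the $e^{|2-2p|}$ leading behaviour of $X_0^{2,2-2p}(e)$, contributes $\sim \xi^2-\eta^2$ at quadratic order. The Hessian at the origin is therefore diagonal of the form $\mathrm{diag}\bigl(\mathcal{A}(L,\Gamma)+\mathcal{B}(L,\Gamma),\,\mathcal{A}(L,\Gamma)-\mathcal{B}(L,\Gamma)\bigr)$, with $\mathcal{B}$ coming purely from the $(1,0)$ term (hence of order $\alpha^3$) and $\mathcal{A}=\tfrac{\rho_0}{L^3}F'(0)+\mathcal{O}(\alpha^3)$.

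Second, since $\det\mathrm{Hess}=\mathcal{A}^2-\mathcal{B}^2$, the origin is a center if $|\mathcal{A}|>|\mathcal{B}|$, a saddle if $|\mathcal{A}|<|\mathcal{B}|$, and degenerate if $|\mathcal{A}|=|\mathcal{B}|$. At $\Gamma/L=\sl/2$ the dominant part of $\mathcal{A}$ vanishes, so $\mathcal{A}=\mathcal{O}(\alpha^3)$ while $\mathcal{B}$ is nonzero at leading order in $\alpha^3$; a direct comparison of the explicit expressions yields $|\mathcal{B}|>|\mathcal{A}|$ there, i.e.\ a saddle. Since $F'(0)$ has a simple zero at $\sl/2$, $|\mathcal{A}|$ grows linearly in $|\Gamma-L\sl/2|$, so for $\Gamma$ sufficiently close to $0$ or to $L/2$ we reach $|\mathcal{A}|>|\mathcal{B}|$, i.e.\ a center. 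Monotonicity of $\mathcal{A}\pm\mathcal{B}$ in $\Gamma$ on each of the sub-intervals $(0,L\sl/2)$ and $(L\sl/2,L/2)$ then forces exactly one crossing on each side, defining $\Gamma_1(L)\in(0,L\sl/2)$ and $\Gamma_2(L)\in(L\sl/2,L/2)$; smoothness of these functions in $L$ follows from the implicit function theorem, with the same monotonicity providing the non-degeneracy hypothesis.

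The main obstacle is the explicit bookkeeping needed to derive closed-form formulas for $\mathcal{A}$ and $\mathcal{B}$ from the Hansen coefficients $X_0^{2,j}(e)$, the Kaula inclination functions $F_{2,m,p}(i)$ and the Giacaglia functions $U_2^{m,0}(\epsilon)$ appearing in $\HH_{\CP,1}$ (see~\eqref{eq:expressionPoincareHCP1}), and then verifying the monotonicity of $\mathcal{A}\pm\mathcal{B}$ in $\Gamma$ uniformly in $L\in[L_{\min},L_{\max}]$. Once these formulas are made explicit, the remaining step reduces to a polynomial analysis in $\Gamma/L$ at fixed $L$, which for the Galileo value $L=1$ becomes a particularly short calculation.
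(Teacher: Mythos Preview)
Your approach is correct and essentially coincides with the paper's argument (deferred to the companion work~\cite{AlessiBGGP23}; a version also appears in the commented-out appendix of the source): linearize at $(\eta,\xi)=(0,0)$, use the diagonal structure to factor the stability criterion as $(\mathcal{A}+\mathcal{B})(\mathcal{A}-\mathcal{B})$ --- in the paper's scaling $\Gamma=L\sigma$ this is $\bigl[\beta(L)\mathbf{a}(\sigma)+c(L)\mathbf{b}(\sigma)\bigr]\bigl[\beta(L)\mathbf{a}(\sigma)-c(L)\mathbf{b}(\sigma)\bigr]$ with $\mathbf{a}(\sigma)=1-16\sigma-20\sigma^2$ vanishing exactly at $\sigma=\sl/2$ and $\mathbf{b}(\sigma)=(2\sigma+3)\sqrt{3-4\sigma-4\sigma^2}$ --- and then locate one zero of each factor on either side of $\sl/2$ via Bolzano plus monotonicity. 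The only refinements to your sketch are that the factor $\mathcal{A}-\mathcal{B}$ is not globally monotone (the paper treats $(0,\sl/2)$ by a derivative bound and $(\sl/2,1/2)$ by a direct sign argument using $\mathbf{a}<0$, $\mathbf{b}>0$ there), and that positivity of that factor at $\sigma=0$ is precisely where the restriction $L\le L_{\max}$ is used.
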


For a given $L$, the values of $\Gamma_{1,2}$ can be numerically computed (as the zeroes of some appropriate function). For instance, for the case of Galileo (i.e., $L=1$), 
\[
\Gamma_1(1)=0.029613649805289, \qquad \Gamma_2(1)=0.084971418151141,
\]
and the corresponding energy levels (for the $h$-averaged system) are
$$
\mathbf{E}_1^{\AV}(1)=2.072230388690642 \cdot 10^{-6},\qquad  \text{and} \qquad \mathbf{E}_2^{\AV}(2)=-3.473759155836634\cdot 10^{-7},$$ respectively. Therefore, for $L=1$ and $E\in (\mathbf{E}_2^{\AV}(1),\mathbf{E}_1^{\AV}(1))$, as a consequence of Theorem~\ref{thm:averaged_results}, the origin is a saddle point of the $h$-averaged system.

Applying perturbation techniques, it can be seen (see~\cite{AlessiBGGP23}) that if $\alpha^3$ is small enough, the circular periodic orbits for the coplanar Hamiltonian, $\HH_\CP$, stated in Theorem~\ref{thm:coplanar_periodic_orbits}, are of saddle type if they belong to certain energy levels close enough to those of $\HH_\AV(0,\Gamma,0)$ with $\Gamma\in (\Gamma_1(L),\Gamma_2(L))$. However, as expected, there is no quantitative information about the maximum value of $\alpha$ such that this result can be applied.  
 For this reason, we have decided to consider the coplanar (non-averaged) model to build the Arnold diffusion mechanism. This is important to have a more realistic numerical computation of the hyperbolic periodic orbits and the corresponding manifolds (see Section~\ref{sec:periodicorbits:coolanar}).

\subsection{Normally hyperbolic invariant manifolds and scattering maps}\label{sec:mathtools}

We devote this section to explain the main tools that we use to construct the ``instability paths'' along which the eccentricity of the satellite drifts. These are normally hyperbolic invariant manifolds, the homoclinic intersections of their invariant manifolds and the associated scattering maps.

Such objects can be defined for maps or flows. We will use them both for maps and flows in Sections \ref{sec:coplanar} and \ref{sec:fullham} below. Here we provide the definitions for flows. The ones for maps are analogous.

In this section we denote by $M$ a $\mathcal{C}^r$ smooth manifold, by $X \in \mathcal{C}^r(M, TM)$ a vector field on $M$ and by $\varphi^t : M \to M$ the associated smooth flow. Let $\Lambda \subset M$ be a compact $\varphi^t$-invariant submanifold, possibly with boundary. By $\varphi^t$-invariant we mean that $X$ is tangent to $\Lambda$, but that orbits can escape through the boundary (a concept sometimes referred to as \emph{local} or \emph{weak} invariance). 

\begin{definition}\label{def:nhim}
We call $\Lambda$ a \emph{normally hyperbolic invariant manifold} for $\varphi^t$ if there is $0 < \nu < \vartheta^{-1}$, a positive constant $C$ and an invariant splitting of the tangent bundle
\begin{equation*}
T_{\Lambda} M = T \Lambda \oplus E^{\sta} \oplus E^{\unst}
\end{equation*}
with ${\displaystyle T\Lambda= \bigcup_{P\in \Lambda} \big (\{P\} \times T_P \Lambda\big )}$, ${\displaystyle E^{\sta,\unst} = \bigcup_{P\in \Lambda} \big (\{P\}\times E_P^{\sta,\unst} \big )}$ such that, for all $P \in \Lambda$,
\begin{equation*} 
\def\arraystretch{1.5}
\begin{array}{ll}
\| D \varphi^t (P) v \| \leq C \vartheta^{| t |} & \mbox{ for all } t \in \mathbb{R}, v \in T_P M, \\ 
\| D \varphi^t (P) v \| \leq C \nu^t & \mbox{ for all } t \geq 0, v \in E_P^{\sta},\\
\| D \varphi^t (P) v \| \leq C \nu^{-t} & \mbox{ for all } t \leq 0, v \in E_P^{\unst}.
\end{array}
\end{equation*}

Moreover, $\Lambda$ is called an $r$-\emph{normally hyperbolic invariant manifold} if it is $\mathcal{C}^r$ smooth  and
\begin{equation}\label{eq_largespectralgap}
0 < \nu < \vartheta^{-r} < 1 \qquad \text{for}\qquad r \geq 1.
\end{equation}
\end{definition}

Fenichel Theory \cite{fenichel1971persistence,fenichel1974asymptotic,fenichel1977asymptotic} ensures that normally hyperbolic invariant manifolds are persistent under perturbations. Moreover, they possess stable and unstable invariant manifolds $W^{\sta,\unst} (\Lambda)\subset M$ defined as follows. The local stable manifold $W^{\sta}_{\mathrm{loc}}(\Lambda)$ is the set of points in a small neighbourhood of $\Lambda$ whose forward orbits never leave the neighbourhood, and tend with exponential rate 
to $\Lambda$. The local unstable manifold $W^{\unst}_{\mathrm{loc}}(\Lambda)$ is the set of points in the neighbourhood whose backward orbits stay in the neighbourhood and tend exponentially to $\Lambda$.  We then define  the stable and unstable manifold of $\Lambda$ as
\begin{equation*}
W^{\sta}(\Lambda) = \bigcup_{t \geq 0}^{\infty} \varphi^{-t} \left( W^{\sta}_{\mathrm{loc}}(\Lambda) \right), \quad W^{\unst}(\Lambda) = \bigcup_{t \geq 0}^{\infty} \varphi^{t} \left( W^{\unst}_{\mathrm{loc}}(\Lambda) \right).
\end{equation*}
The stable and unstable manifolds of $\Lambda$ are foliated by what is usually called the strong stable and strong unstable foliations, the leaves of which we denote by $W^{\sta,\unst}(P)$ for $P \in \Lambda$. For each $P\in \Lambda$, the leaf $W^{\sta}(P)$ (resp. $W^{\unst}(P)$) of the strong stable foliation is tangent at $P$ to $E^{\sta}_P$ (resp. $E^{\unst}_P$). Moreover, the foliations satisfy that $\varphi^t \left( W^{\sta} (P) \right) = W^{\sta} \left( \varphi^t (P) \right)$ and $\varphi^t \left( W^{\unst} (P) \right) = W^{\unst} \left( \varphi^t (P) \right)$ for each $P\in \Lambda$ and $t \in \mathbb{R}$. 
 
Then, one can define  the usually called \emph{wave maps} $\pi^{\sta,\unst} : W^{\sta,\unst} (\Lambda) \to \Lambda$ to be projections along leaves of the strong stable and strong unstable foliations. That is to say, if $Q \in W^{\sta} (\Lambda)$ then there is a unique $Q_+ \in \Lambda$ such that $Q \in W^{\sta}(Q_+)$, and so $\pi^{\sta}(Q)=Q_+$. Similarly, if $Q \in W^{\unst} (\Lambda)$ then there is a unique $Q_- \in \Lambda$ such that $Q \in W^{\unst}(Q_-)$, in which case $\pi^{\unst}(Q)=Q_-$. The points $Q_\pm$ satisfy that
\[
\lim_{t \to \pm \infty} \mathrm{dist}(\varphi^t(Q_{\pm}),\varphi^t(Q)) = 0.
\]
Now, suppose that $Q \in \left(W^{\sta}(\Lambda) \pitchfork W^{\unst} (\Lambda)\right) \setminus \Lambda$ is a transverse homoclinic point such that $Q \in W^{\sta}(Q_+) \cap W^{\unst}(Q_-)$. We say that the homoclinic intersection at $Q$ is \emph{strongly transverse} if
\begin{equation}
\begin{split} \label{eq_strongtransversality}
T_Q W^{\sta} (Q_+) \oplus T_Q \left( W^{\sta}(\Lambda) \cap W^{\unst}(\Lambda) \right) = T_Q W^{\sta} (\Lambda), \\
T_Q W^{\unst} (Q_-) \oplus T_Q \left( W^{\sta}(\Lambda) \cap W^{\unst}(\Lambda) \right) = T_Q W^{\unst} (\Lambda).
\end{split}
\end{equation}
In this case, in a small enough neighborhood $\Xi$ of $Q$ in $W^{\sta}(\Lambda) \cap W^{\unst}(\Lambda)$, \eqref{eq_strongtransversality} holds at each point of $\Xi$, and the restrictions to $\Xi$ of the holonomy maps, $\pi^{\sta,\unst}$, are bijections onto their images. We call $\Xi$ a \emph{homoclinic channel}. In such domain, following \cite{delshams2008geometric}, we define the scattering map as follows.


\begin{definition}\label{def:scattering}
Let $Q_-\in \pi^{\unst} \left( \Xi \right)$, let $Q = \left(\left. \pi^{\unst} \right|_{\Xi} \right)^{-1} (Q_-)$, and let $Q_+ = \pi^{\sta}(Q)$. The \emph{scattering map} $S : \pi^{\unst} (\Xi) \to \pi^{\sta} (\Xi)$ is defined by
\begin{equation*}
S = \pi^{\sta} \circ \left( \pi^{\unst} \right)^{-1} : Q_- \longmapsto Q_+.
\end{equation*}
\end{definition}


Assume that both $M$ (the manifold) and $X$ (the vector field) are $\mathcal{C}^r$ with $r\geq 2$ and that \eqref{eq_largespectralgap} is satisfied, so that $\Lambda$ is also $\mathcal{C}^r$. Then, the scattering map $S$ is $\mathcal{C}^{r-1}$ (see~\cite{delshams2008geometric}). Moreover, if the vector field has a $\CCC^r$-dependence on some parameters, then  the scattering map depends on a $\CCC^{r-1}$ on them.

In general, the scattering map is only locally  defined, as the transverse homoclinic intersection of stable and unstable manifolds can be very complicated. In the present paper, we are able to describe quite precisely the domains of the scattering maps that we consider.


\subsection{The Arnold diffusion instability mechanism}\label{sec:Arnold} 


Once we have introduced the hierarchy of models in Section  \ref{section:hierarchy} and the dynamical systems tools in Section \ref{sec:mathtools}, we are ready to explain the instability mechanism that we consider to achieve drift in the eccentricity of the satellite. Such mechanism fits into what are usually called \emph{a priory chaotic} Hamiltonian systems in Arnold diffusion literature \cite{DelshamsLS00, Piftankin06,fejoz2016kirkwood}.

Consider first the coplanar Hamiltonian $\HH_{\CP}$ in~\eqref{def:Ham:Coplanar}, namely, $i_\rM=0$. Since $\HH_\CP$ is autonomous, its energy is a first integral, and therefore the coordinate $J$  as well for the Hamiltonian $\KK_{\CP}$ in~\eqref{def:KHam:CP} (in the extended phase space). In Section \ref{sec:coplanar} we will see that at each energy level (in a certain interval), the Hamiltonian $\HH_\CP$ has a periodic orbit  at the $2g+h$-resonance. 
In Poincar\'e variables, this periodic orbit is located at $\{\eta=\xi=0\}$. Moreover, in the considered energy interval, these periodic orbits are hyperbolic and, thus, they have stable and unstable invariant manifolds. In addition, we check numerically that they intersect transversally within each energy level. The obtained homoclinic orbit has a range of eccentricity between $e=0$ (the periodic orbit) and certain $e=e_\mathrm{\max}$ which depends on the energy level\footnote{By energy level we mean with respect to  the coplanar secular Hamiltonian $\mathcal{H}_{\CP}$.}. For energy levels realistic for Galileo $e_\mathrm{\max}\simeq 0.35$ (in particular, its orbit does not enter the Earth's atmosphere) whereas for higher energies $e_\mathrm{\max}$ reaches 0.78 (which implies that the osculating ellipse of the satellite hits the Earth atmosphere) or higher.

\begin{figure}[ht]
    \centering
    \vspace{5mm}
    \begin{overpic}[scale=0.7]{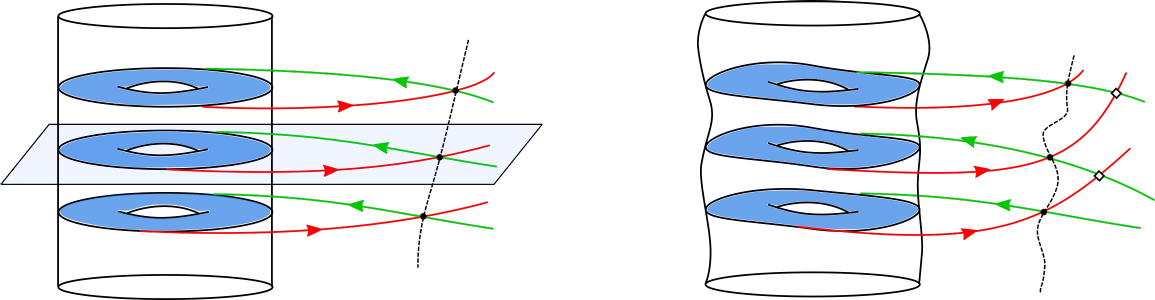}
    	\put(11,-4){$\iM=0$}
    	\put(1,22){\color{black}$\Lambda_0$}
    	\put(27,21){\footnotesize\color{myGreen} \small $W^{\sta}(\Lambda_0)$}
    	\put(-7,12){\color{blue}\small $J=J_0$}
    	\put(25,3){\color{red} \footnotesize \small $W^{\unst}(\Lambda_0)$}
    	\put(36,25){\tiny homoclinic}
    	\put(38,23){\tiny channel}
    	\put(68,-4){$\iM>0$}
    	\put(55,22){\color{black}$\Lambda_{\iM}$}
    	\put(81,21){\color{myGreen}\footnotesize \small $W^{\sta}(\Lambda_{\iM})$}
    	\put(80,2){\color{red}\footnotesize \small $W^{\unst}(\Lambda_{\iM})$}
    \end{overpic}
	\vspace{5mm}
    \caption{The normally hyperbolic invariant cylinder and its invariant manifolds. In the left figure $\iM=0$ and therefore $J$ is a first integral. The cylinder is foliated by invariant tori whose invariant manifolds intersect transversally within $J=\text{constant}$ giving rise to a homoclinic channel. In the right figure, $\iM>0$ (small enough). Then, $J$ is not a first integral anymore and one can construct heteroclinic connections between different tori in the cylinder (diamond intersections in the picture).
    See Corollary~\ref{corollary:NHIMiM0} and Theorem~\ref{theorem:invariantObjectsPerturbed}, respectively, for a detailed description of the notation. 
    }
    \label{fig:AddingPerturbation}
\end{figure}

In the extended phase space (i.e., adding the pair $(J,\Omega_\rM)$ and considering the Hamiltonian $\KK_\CP$ in \eqref{def:KHam:CP}) these periodic orbits become 2-dimensional tori. If one considers the union of such tori for an energy interval, one has a normally hyperbolic invariant cylinder (see Definition \ref{def:nhim}). Analogously, the union of the stable and unstable manifolds of the periodic orbits (now tori) constitute the invariant manifolds of the cylinder and their transverse intersections give rise to homoclinic channels (see Figure~\ref{fig:AddingPerturbation}-left). This allows to define scattering maps (in suitable domains, detailed in Section~\ref{sec:coplanar}).

The fact that $J$ is constant   when $i_\rM=0$, implies that solutions close to the invariant manifolds of the cylinder have a constrained drift in eccentricity: it ranges from $e=0$ (close to the cylinder) to close to the already mentioned $e_\mathrm{max}$ (close to  the ``first'' homoclinic point), which depends on $J$.

When $i_\rM>0$ but still small enough the dynamics changes drastically. Indeed, in Section \ref{sec:fullham}, we construct solutions of the full Hamiltonian $\KK$ in~\eqref{def:HamExtPhsSp} such that the action $J$ undergoes big changes. Equivalently, we construct solutions of the Hamiltonian $\HH$ in~\eqref{def:Ham:H:Poinc} with energy drift. These solutions travel along the resonance \eqref{def:resonance} and therefore, while increasing energy along the resonance, they perform larger homoclinic excursions which lead to  larger oscillations  in eccentricity.

The achievement of this drift is given by  an Arnold diffusion mechanism, which relies on the normally hyperbolic invariant manifold and the associated invariant manifolds and scattering maps (see Section \ref{sec:mathtools}).
Indeed, classical Fenichel Theory (see \cite{fenichel1971persistence}) implies that the normally hyperbolic invariant cylinder of the Hamiltonian $\HH_\CP$ in \eqref{def:Ham:Coplanar} persists for $i_\rM$ small enough and the same happens for the associated stable and unstable invariant manifolds (see Figure~\ref{fig:AddingPerturbation}-right). Moreover, all the objects are regular with respect to $i_\rM$. This implies that the transverse intersections present in the case $i_\rM=0$ are also persistent and the associated scattering maps are also smooth with respect to $i_\rM$.

We use the scattering maps to construct what is usually called a pseudo-orbit. That is, a sequence of concatenated ``pieces of orbits''. More precisely, this pseudo-orbit is formed by pieces of orbits in the cylinder (that is, pieces of orbits of what is usually called the \emph{inner dynamics}) which are connected by heteroclinic orbits (see Figure \ref{fig:Shadowing}-left). Recall that a point in the cylinder $Q_-$ is mapped to a point $Q_+$ by the scattering map if there is a heteroclinic orbit of the Hamiltonian \eqref{def:HamExtPhsSp} between them. Thus, to construct a pseudo-orbit one has to understand how to ``combine'' the inner dynamics (the flow restricted to the cylinder) and the scattering map (in occasions also called outer map) to achieve drift in  $J$.

 \begin{figure}[ht]
 	\centering
    \vspace{5mm}
	\begin{overpic}[scale=0.7]{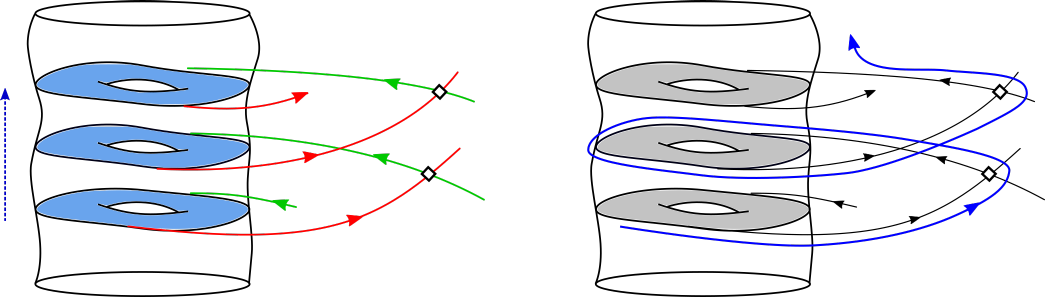}
 		\put(-3,12){\color{blue} $J$}
        \put(-3,25){\color{black}$\Lambda_{\iM}$}
    	\put(27,23){\color{myGreen}\footnotesize \small $W^{\sta}(\Lambda_{\iM})$}
    	\put(27,2){\color{red}\footnotesize \small $W^{\unst}(\Lambda_{\iM})$}
        \put(49,25){ $\Lambda_{\iM}$}
        \put(80,2){\color{blue}\small Shadowing orbit}
 	\end{overpic}
 	\caption{The pseudo-orbit (left) and the shadowing orbit (right): The blue orbit shadows (follows closely) the pseudo-orbit formed by heteroclinic orbits connecting different points in the cylinder and pieces of cylinder orbits.}
 	\label{fig:Shadowing}
 \end{figure}

Both the inner and scattering maps are computed numerically relying on perturbative methods (for $i_\rM>0$ small enough). Thanks to the particular form of the Hamiltonian~\eqref{def:HamExtPhsSp}, these maps have a rather simple form. Indeed, note that one can reduce the dimension of the model by considering a Poincar\'e map associated to the section $\{h=0\}$ and by eliminating the variable $\Gamma$ through energy conservation. Then, the cylinder becomes two dimensional and a good system of coordinates for it is $(J,\Omega_\rM)$. In these coordinates, the inner map is of the form
\begin{equation}
\label{def:innerMapPrevious}
	\FF^{\inner}_{\iM}: \begin{pmatrix}
    J 
  \\ \OmegaM 
	\end{pmatrix}
	\mapsto
	\begin{pmatrix}
	J + \iM \left(A_{1}^+(J)e^{i\OmegaM} +  A_{1}^-(J)e^{-i\OmegaM}\right)    + \OO(\iM^2)
  \\ 
  \OmegaM + n_{\OmegaM} \TTT_0(J)
   + \OO(\iM)
	\end{pmatrix},
\end{equation}
and one can define two scattering maps (denoted as primary and secondary) of the form 
\begin{equation}
\label{def:outerMapPrevious}
	\FF^{\outer,*}_{\iM}: \begin{pmatrix}
    J 
  \\ \OmegaM 
	\end{pmatrix}
	\mapsto
	\begin{pmatrix}
	J + \iM ( B_{1}^{*,+}(J)e^{i\OmegaM} + 
 B_{1}^{*,-}(J) e^{-i\OmegaM}) + \OO(\iM^2) 
  \\ 
  \OmegaM + n_{\OmegaM} \zeta^*(J)  + \OO(\iM)
	\end{pmatrix}, 
\end{equation}   
for $*\in\{\prim,\secn\}$, associated to two different homoclinic channels.
We compute numerically the functions appearing in the first orders of these maps. 
Those of the scattering maps, $\FF^{\outer,*}_{\iM}$, are given by Melnikov-like integrals (similar so those in \cite{DelshamsLS00, fejoz2016secular}).

Note that the scattering maps are not globally defined since the invariant manifolds of the cylinder may have tangencies. This is the reason why we must use two scattering maps: the union of domains of the two scattering maps contain the whole region of the cylinder we are interested in.

Then, we construct  a drifting pseudo-orbit. Note that we cannot use  now the  classical results~\cite{moeckel2002drift,Calvez07}, which ensure that such pseudo-orbits exist  provided the inner and scattering  maps dynamics share no common invariant curves, since they require globally defined maps (which is not the case for the scattering maps in the present paper). Instead, we construct it from a ``generalized transition chain'', that is, a sequence of invariant quasi-periodic tori of either the inner map or one of the scattering maps, which are connected by an iteration of the other map. This is done in Section~\ref{sec:fullham}.


Once the pseudo-orbit is constructed, the final step is to show that there is a true orbit of the Hamiltonian \eqref{def:HamExtPhsSp} which ``shadows'' (i.e. follows closely) the pseudo-orbit (see Figure \ref{fig:Shadowing}-right). For this last step we rely on shadowing results developed in \cite{GideaLS20}. These results are rather flexible and can be easily applied to our setting. Unfortunately, they do not provide time estimates. 

\begin{remark}\label{rmk:time}
Instead of \cite{GideaLS20}, we could have  relied on more quantitative shadowing arguments (see for instance \cite{treschev02, Piftankin06, gidea2006topological,ClarkeFG23}) which could provide time estimates. However, they  require a more precise knowledge of the inner dynamics and, therefore, to keep the proof simple, we do not use them. Still, they should be applicable also to our setting and should lead to instability times $T=T(\iM)$  satisfying
\[
|T|\leq \frac{C}{\iM},
\]
for some constant $C>0$ independent of $\iM>0$. 
\end{remark}




\section{The coplanar secular Hamiltonian}\label{sec:coplanar}

In this section, we analyze the dynamics of the Hamiltonian $\HH_\CP$ (see~\eqref{def:Ham:Coplanar}). That is, we consider the secular Hamiltonian with no inclination of the Moon with respect to the ecliptic plane (i.e., $\iM=0$). 
In particular, we analyze certain features of its dynamics and then we ``translate'' them to the Hamiltonian $\KK_\CP$ (see~\eqref{def:KHam:CP}) in the  extended phase space.


Since we consider $\al=0.077$ as a fixed parameter\footnote{Instead of taking it ``small enough''.} (see~\eqref{def:alpha}), the application of perturbative  techniques to analyze $\HH_{\CP}$ is not possible. 
Instead, in the following sections, we assume certain ans\"atze on the Hamiltonian and verify them numerically.

\subsection{Periodic orbits and their invariant manifolds}\label{sec:periodicorbits:coolanar}

Concerning the secular coplanar Hamiltonian in~\eqref{def:Ham:Coplanar} we assume the following ansatz. It concerns the existence of hyperbolic periodic orbits at circular motions and the transverse intersections of their stable and unstable invariant manifolds.

As we claimed in Section~\ref{sec:theoretical_averaged_coplanar}, the plane $\{\eta=\xi=0\}$ is invariant by the coplanar Hamiltonian. The ansatz is related to the (circular) periodic orbits lying on $\{\eta=\xi=0\}$. 


\begin{ansatz}\label{ansatz:periodic}
	The  Hamiltonian $\HH_{\CP}(\eta,\Gam,\xi,h)$ given in \eqref{def:Ham:Coplanar} satisfies the following statements.
	\begin{enumerate}
		%
  \item In every energy level $E\in [E_{\min},E_{\max}]=[-2.12\cdot 10^{-7},1.36\cdot 10^{-6}]$, the periodic orbit ${\PP}_E(t)=(0,\Gam_E(t),0,h_E(t))$ provided in Theorem~\ref{thm:coplanar_periodic_orbits} is hyperbolic. Denoting $\TTT(\PP_E)$ its period, we have that 
		\[
		n_{\OmegaM} \TTT(\PP_E) \in [3.9\pi,4.15\pi],
		\qquad
		\HH_{\CP}(\PP_E(t)) = E.
		\]
		Both the periodic orbit and the period depend smoothly on $E$.
		In addition, $\TTT(\PP_E)$ is a strictly increasing function with respect to $E$.

		%

   \item For each $E\in [E_{\min},E_{\max}]$, the invariant manifolds $W^{\unst}(\PP_E)$ and $W^{\sta}(\PP_E)$ intersect transversally.
  Let $\mathcal{Q}_E(t)$ be any of these homoclinic orbits.

  \item For $E\in [E_{\min},E_{\max}]$, the periodic orbit $\PP_E(t)$ and the homoclinic orbit $\chi_E(t)$ satisfy
  \[
  i(\mathcal{Q}_E(t)), i(\PP_E(t))\in [55.70\degre , 58.18\degre],\qquad \text{for all }\,t\in\RR,
  \]
  and 
  \[
 \max_{t\in\RR}e(\mathcal{Q}_E(t)) \geq 0.795,
  \]
  where $i(\cdot)$ and $e(\cdot)$ denote the osculating inclination and eccentricity respectively.
	\end{enumerate}
\end{ansatz}


Note that this ansatz assumes the existence of transverse homoclinic orbits to the periodic orbits but does not provide information on how these homoclinic orbits depend on $E$. In fact, we will have to consider the intersections between different branches of the invariant manifolds depending on the energy levels to avoid homoclinic tangencies.

We devote the next section to verify numerically this ansatz. 

%

\subsubsection{Numerical verification of Ansatz \ref{ansatz:periodic}}

As we claimed in Section~\ref{sec:theoretical_averaged_coplanar}, the existence of the circular periodic orbits located at $\{\eta=\xi=0\}$ is guaranteed by Theorem~\ref{thm:coplanar_periodic_orbits}. Considering non-dimensional units (see  Remark~\ref{rmk:unities}), some examples of these periodic orbits computed for the system given by $\HH_{\CP}$ are shown in Figure~\ref{fig:PO_example}. Note that Galileo corresponds to $\HH_{\CP}=1.7\cdot  10^{-8}$ (assuming $i=56.06\degre)$.

\begin{figure}[ht]
\begin{center}
\includegraphics[width=10cm]{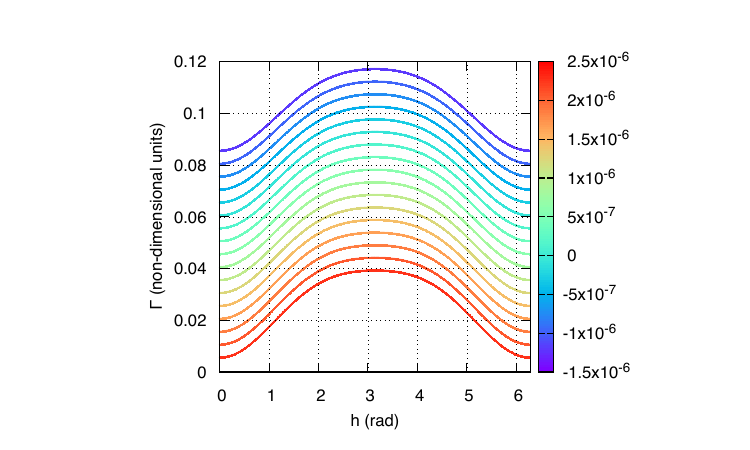}%
\end{center}
\caption{Examples of periodic orbits for $i_M=0$, non-dimensional units. The colorbar reports the value of ${\HH}_{\CP}$. Here it is shown the behavior of a range of energies larger than the one we are interested in.}
\label{fig:PO_example}
\end{figure}

Concerning the stability, the eigenvalues of the monodromy matrix of a given periodic orbit are of the type $\big \{1,1,e^{\TTT\lambda},e^{-\TTT\lambda} \big \}$, with $\TTT$ the period of the given orbit, in case of orbits with hyperbolic nature, or $\big \{1,1,i\nu,-i\nu\big \}$ in case of  orbits with elliptic nature. We will consider only the former case. In Figure~\ref{fig:hamlam}, we show the behavior of the eigenvalue greater than 1 for the hyperbolic periodic orbits, as a function of ${\HH}_{\CP}$.  

\begin{figure}[ht]
	\begin{center}
		\includegraphics[width=9cm]{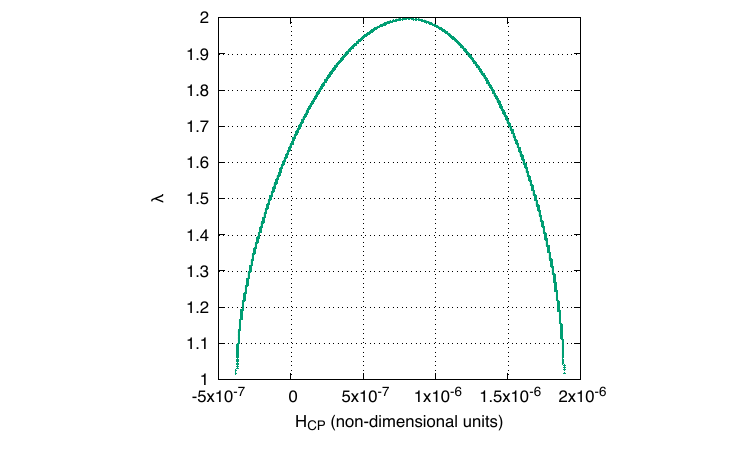} 
	\end{center}
	\caption{The value of the eigenvalue greater than 1, $\lambda$, as a function of the ${\HH}_{\CP}$ for the hyperbolic periodic orbits.} 
	\label{fig:hamlam}
\end{figure}

In Figure~\ref{fig:hamperiod}, we show the period of the hyperbolic periodic orbits and the value of $n_{\OmegaM} \TTT$ as a function of ${\HH}_{\CP}$.
Recall that $\OmegaM(t) = \Omega_{\rM,0} + n_{\OmegaM}t$ (see~\eqref{eq:OmegaM}), therefore the value $n_{\OmegaM} \TTT$ indicates the ratio between the periods of the variables $h$ and $\OmegaM$.
Notice that at ${\HH}_{\CP}\approx 4.4472 \cdot  10^{-7} \in [E_{\min},E_{\max}]$ (non-dimensional units), the period is such that $n_{\OmegaM} \TTT=4\pi$ which corresponds to the resonance $2h-\OmegaM$ (that is, $\TTT = 2\TTT_{\OmegaM}$).
This double resonance was already observed in~\cite{Daquin2022} and will be commented further at the end of the paper.

\begin{figure}[h!]
	\begin{center}
		\includegraphics[width=6cm]{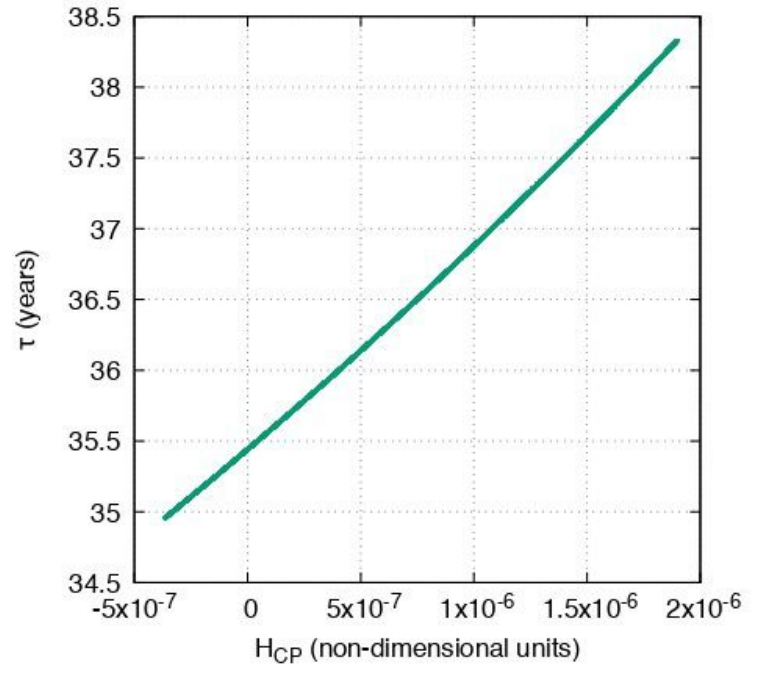} 
		\includegraphics[width=6cm]{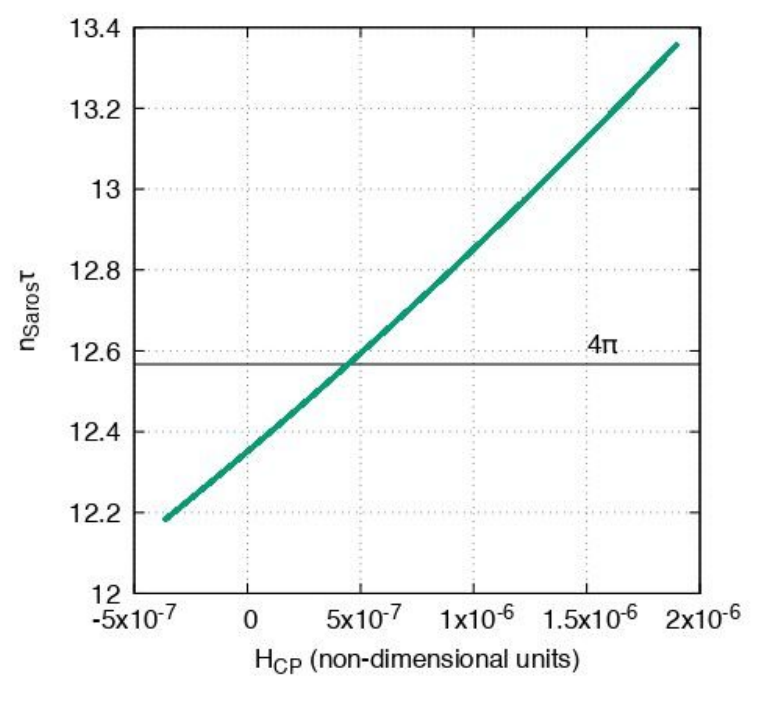}
	\end{center}
	\caption{The period (left) $ \TTT$ of the hyperbolic periodic orbits as a function of ${\HH}_{\CP}$. On the right, $n_{\Saros} \TTT$ (where $n_{\Saros}\equiv n_{\OmegaM}$)  as a function of ${\HH}_{\CP}$. }
	\label{fig:hamperiod}
\end{figure}

Next, we analyze the dynamics of the stable and the unstable manifolds of the circular periodic orbits and we look for transverse intersections between them at each energy level (Item 2 in the ansatz). 
%
%
Note that the Hamiltonian  $\HH_{\CP}$ is reversible with respect to the  involutions
\begin{equation*}
\label{def:symmetryPoincare}
	\Phi^{\hor}(\eta,\Gam,\xi,h) = (-\eta,\Gam,\xi,-h)
 \quad \text{and} \quad
 {\Phi}^{\ver}(\eta,\Gam,\xi,h) = (\eta,\Gam,-\xi,-h),
\end{equation*}
where $\hor$ and $\ver$ stand for horizontal and vertical respectively, referring to the projection of the symmetry axis onto the $(\xi,\eta)$ plane, which are $\{\eta=0, \, h=0 \}$, $\{\eta=0, \, h=\pi \}$ and  $\{\xi=0, \, h=0 \}$, $\{\xi=0, \, h=\pi \}$ respectively.
These symmetries  simplify the  numerical verification of the ansatz since one can easily see that the intersections between 
$W^{\unst}(\PP_E)$ and $W^{\sta}(\PP_E)$
have points at  these symmetry axes in all the considered energy levels. However, these intersections may not be transverse. Indeed, the angles between the invariant manifolds at the symmetry axes depend analytically on the energy and they may vanish for a discrete set of values of the energy.

In Figure~\ref{fig:man_example}, on the left, we show the behavior of the unstable invariant manifolds associated with the periodic orbit on the Poincar\'e section $\{h=0\}$, together with the corresponding eccentricity growth (colorbar). Each curve corresponds to a different energy levels. In the same figure on the right, we show the behavior of the invariant manifolds on the Poincar\'e section $\{h=0\}$ (purple) and  $\{h=\pi\}$ (green), for a same value of energy. 

\begin{figure}[h]
\begin{minipage}{0.5\linewidth}
\vspace{-0.3cm}
\includegraphics[width=9.5cm]{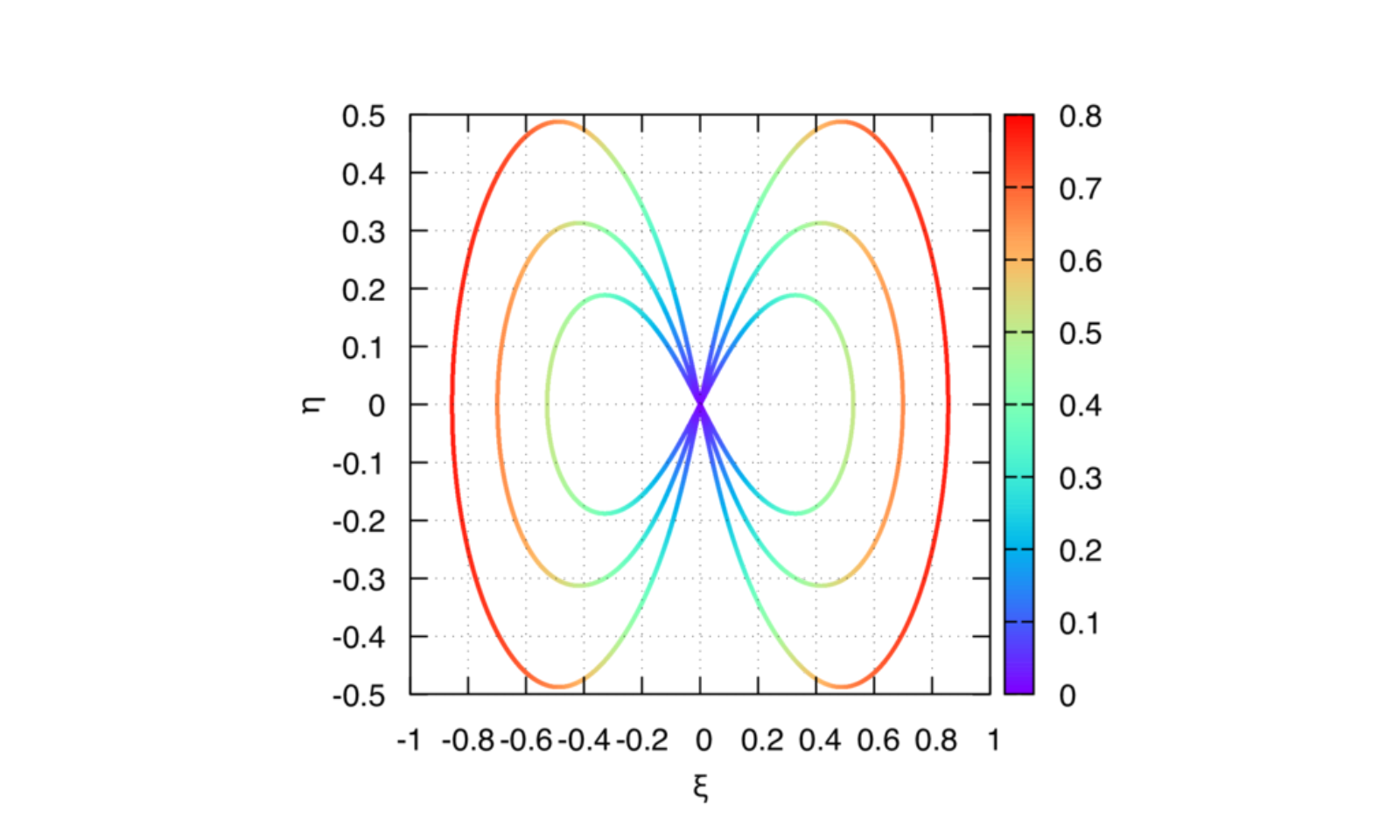}
\end{minipage}
\begin{minipage}{0.3\linewidth}
\includegraphics[width=8.2cm]{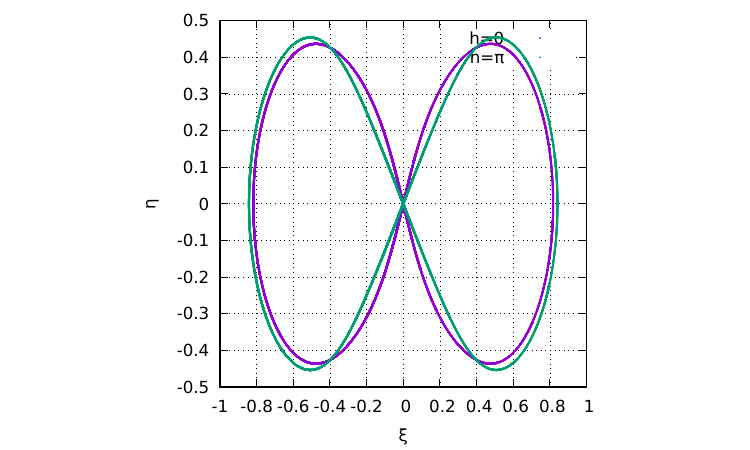}
\end{minipage}
\caption{Left: the unstable manifolds on the Poincar\'e section $\{h=0\}$ for different energy levels. The colorbar depicts the evolution of the eccentricity along  these unstable manifolds. Right: the behavior of the unstable manifolds computed assuming as Poincar\'e map $\{h=0\}$ (purple) and $\{h=\pi\}$ (green), for a given energy level. }
\label{fig:man_example}
\end{figure}
All the computations of the hyperbolic manifolds have been performed following the procedure explained in \cite{fejoz2016kirkwood}. 
The first intersection between the stable and the unstable manifolds takes place at $\eta=0$ ($\xi\neq 0$), after about 30 iterations on the Poincar\'e map of the fundamental domain assuming an initial displacement of $10^{-8}$ in non-dimensional units from the periodic orbit along the eigendirections. This choice corresponds to an error of $10^{-11}$ following \cite{fejoz2016kirkwood}.

To verify Item 2 in the ansatz we must show that, at every energy level, the circular periodic orbit has a transverse homoclinic. We first consider the homoclinic point at $\{\eta=0, \, h=0 \}$ with $\xi>0$. The corresponding value of $\xi$ and the maximum eccentricity growth achieved are shown in Figure~\ref{fig:xi0_ecmax}.
The invariant manifolds are transverse at these  points for all energies within the considered energy range except for  6 values. (These values will be showed later in the first column of Table \ref{tab:reldiff}, see also Figure~\ref{fig:splittingh0hpi}).

\begin{figure}[ht]
\begin{minipage}{0.45\linewidth}
\includegraphics[width=8.2cm]{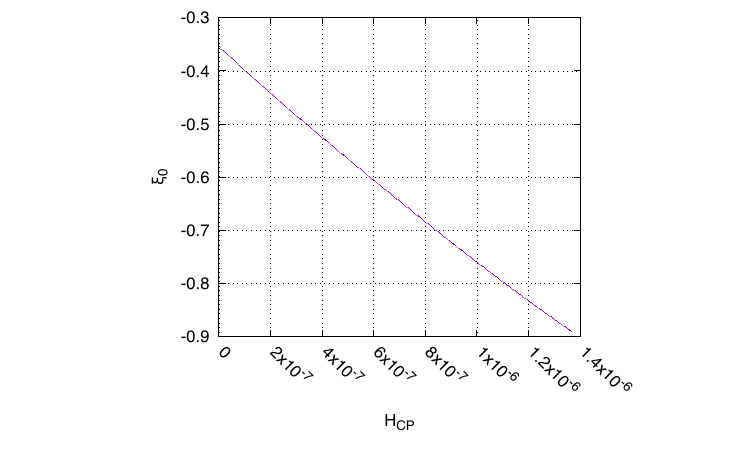}
\end{minipage}
\begin{minipage}{0.45\linewidth}
\includegraphics[width=8.2cm]{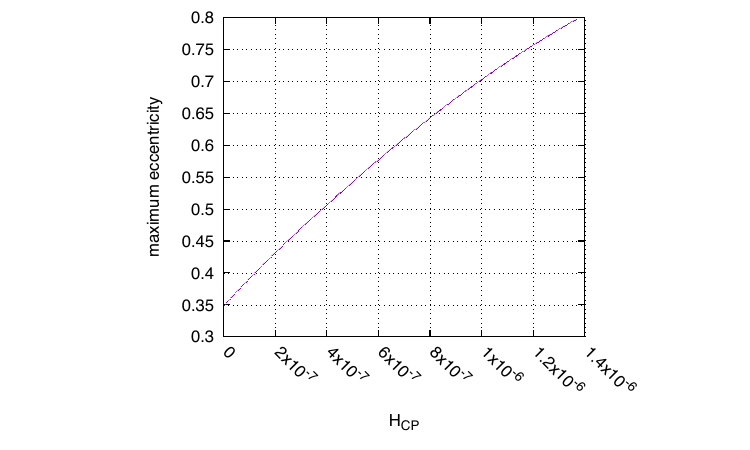}
\end{minipage}
 \caption{Left: first point of intersection $\xi_0$ between negative branch of the stable and the unstable manifolds of a given periodic orbit at the crossing with the $\{\eta=0,\, h=0\}$ axis, as a function of ${\HH}_{\CP}$ (non-dimensional units). Right: the corresponding maximum eccentricity achievable along the hyperbolic manifolds as a function of ${\HH}_{\CP}$ (non-dimensional units).}
 \label{fig:xi0_ecmax}
\end{figure}

For these 6 energy levels we must look for other homoclinic points and check the corresponding transversality. Note that one cannot consider the homoclinic points at  $\{\eta=0, \, h=0 \}$ with $\xi<0$ (see Figure \ref{fig:man_example}) since  they have the same tangencies as the first branch considered as they are $\Phi^\ver$-symmetric.  The first choice is to look for the homoclinic points at the symmetry axis $\{\eta=0, \, h= \pi \}$ (with either $\xi>0$ or $\xi<0$, since they are $\Phi^\ver$-symmetric). However, the energy values for which these homoclinic points develop tangencies are very close to those of the other branch considered. This is shown in Table \ref{tab:reldiff} and Figure~\ref{fig:splittingh0hpi}, where we show the splitting angle computed at the first point of intersection between the negative branch of the stable and the unstable manifolds at $\{\eta=0, \, h=0\}$ and $\{\eta=0, \, h=\pi\}$ as a function of the given energy level. One should expect that even if they look to be very close, the tangency energy values of the two branches are disjoint. However, to check this would require a numerical analysis with a much higher precision.

\begin{table}[t]
	\begin{center}
		\begin{tabular}{c||cc||c}
			\hline	
   $\HH_{\CP}$ & $\Delta \HH_{\CP}$ & $\Delta \HH_{\CP}/\HH_{\CP}$ & $\phi$\\
   \hline
    1.34294e-6 & 1.2e-11 & 8.9e-6 & 0.788\\
    1.23642e-6 & 1.5e-10 & 1.2e-4 & 0.997\\
    1.09175e-6 & 6.1e-12 & 5.6e-6 & 1.183\\
    8.9030e-7 & 6.6e-12 & 7.4e-6 & 1.276\\
    6.0662e-7 & 2.5e-11 & 4.1e-5 & 1.554\\
    2.1005e-7 & 7.5e-11 & 3.5e-4 & 1.935\\
    \hline
   \end{tabular}
   \vspace{1em}
   \caption{The value of  $\HH_{\CP}$ corresponding to a non-transverse intersection at $\{\eta=0,\, h=0\}$ and the absolute and relative difference with respect to the value of $\HH_{\CP}$ corresponding to a non-transverse intersection at $\{\eta=0,\, h=\pi\}$. On the last column, the splitting angle $\phi$ (rad) computed at $\{\xi=0,\, h=0\}$ at the same energy level. }
		\label{tab:reldiff}
   \end{center}
   \end{table}

\begin{figure}[ht]
\centering
		\includegraphics[width=12.cm]{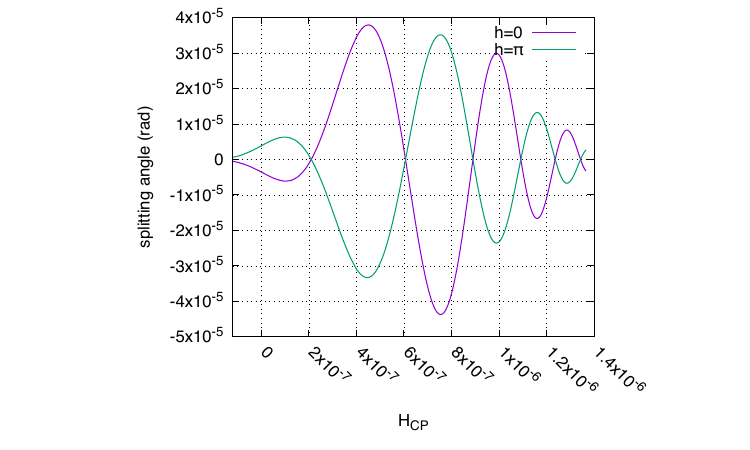}
 \caption{Splitting angle between the negative branch of the stable and the unstable manifolds computed at $\{\eta=0,\, h=0\}$ (purple) and $\{\eta=0,\, h=\pi\}$ axis (green), as a function of ${\HH}_{\CP}$ (non-dimensional units). The tangencies of both branches are very close to each other and  beyond the numerical accuracy that we consider. }
 \label{fig:splittingh0hpi}
\end{figure}

Instead, to keep the numerical analysis ``simple'', we consider a different branch: we look for homoclinic points at the symmetry axis $\{\xi=0, \, h= 0 \}$. We refer to these homoclinic points as secondary since their orbits approach twice the saddle, in contrast to the first branch, which we denote by primary.
The corresponding splitting angle $\phi$ is shown in Table~\ref{tab:reldiff} (last column) and an example of such intersection is given in Figure~\ref{fig:homoclinic_xi0}. 

\begin{figure}[h!]
\centering
		\includegraphics[width=12.cm]{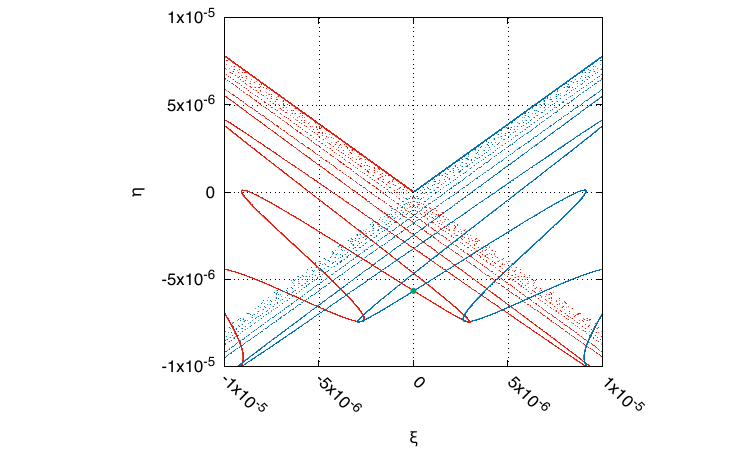}
 \caption{Homoclinic intersection (green point) computed at $\{\xi=0,\, h=0\}$ for ${\HH}_{\CP}=2.1005\cdot 10^{-7}$ (non-dimensional units). In red the negative branch of the unstable manifold, in blue the negative branch of the stable manifold.}
 \label{fig:homoclinic_xi0}
\end{figure}

Once we have computed the periodic orbits and the associated transverse homoclinics, Item 3 of the ansatz can be easily verified  by integrating numerically the flow of the coplanar system.

\subsection{The invariant cylinder and their invariant manifolds}

In this section we analyze how the periodic orbits provided by Ansatz~\ref{ansatz:periodic} (Item 1) give rise to a normally hyperbolic invariant cylinder and, by means of Item 2 of Ansatz~\ref{ansatz:periodic}, we analyze the associated stable and unstable invariant manifolds and their intersections.
Then, we study the inner dynamics (dynamics restricted to the cylinder) and the scattering maps  of the coplanar secular model (see Section \ref{sec:mathtools}).

Our aim is to study the complete problem as a perturbation of the coplanar one. 
To this end, we consider the Hamiltonian $\KK_{\CP}$ (see~\eqref{def:KHam:CP}) defined in the extended phase space $(\eta,\Gam,J,\xi,h,\OmegaM)$.
In other words, we keep the conjugated variables $(J,\OmegaM)$ even if $\OmegaM$ is a cyclic variable (see \eqref{eq:OmegaM}).
As a result, the periodic orbits considered in Ansatz~\ref{ansatz:periodic} become $2$-dimensional invariant tori. 
Indeed, we first notice that the energy level $\KK_{\CP}=0$ in the extended phase space corresponds to $\HH_{\CP}=E$ with $E=-n_{\OmegaM}J$. In addition, since $\dot{h}_E(t) \neq 0$, $t=h_{E}^{-1}(h)$.
Then, from functions $h^{-1}_E(t)$ and $\Gam_E(t)$, we can obtain a parametrization of these tori in terms of $(h,\OmegaM)$ of the form
\[
\Big(0,
\Gam_{J}^{\mathrm{ext}}(h),
J,0,
h,\OmegaM\Big),
\qquad
\text{for }
(h,\OmegaM) \in \TT^2,
\]
for 
\[
J \in [J_{\min},J_{\max}] = \left[-\frac{E_{\max}}{n_{\OmegaM}},-\frac{E_{\min}}{n_{\OmegaM}}\right],
\]
with $E_{\max}$, $E_{\min}$ as given in Ansatz~\ref{ansatz:periodic}.
Therefore, the union of these $2$-dimensional invariant tori form a normally hyperbolic invariant $3$-dimensional manifold $\Lambda_0$, diffeomorphic to a cylinder $\mathbb{T}^2\times [J_{\min},J_{\max}]$.
Applying the implicit function theorem with respect to the energy $J$, one can see that the cylinder  is analytic (by Ansatz~\ref{ansatz:periodic}, the periodic orbits are hyperbolic, thus non-degenerate).
We summarize this fact in the following lemma, which is a direct consequence of Ansatz \ref{ansatz:periodic} (see also Figure~\ref{fig:AddingPerturbation}-left).

\begin{corollary}\label{corollary:NHIMiM0}
Assume Ansatz~\ref{ansatz:periodic}.
Then, the Hamiltonian $\KK_{\CP}$ (see~\eqref{def:KHam:CP}) has an analytic
normally hyperbolic invariant $3$-dimensional cylinder $\Lambda_0$, which is foliated by $2$-dimensional invariant tori on the constant invariant hyperplanes $J=J_0 \in  [J_{\min},J_{\max}]$.

%

%
Moreover, the cylinder $\Lambda_0$ has  4-dimensional invariant manifolds, denoted by $W^{\unst}(\Lambda_0)$ and $W^{\sta}(\Lambda_0)$, which, for every $J\in  [J_{\min},J_{\max}]$, intersect transversally either at the symmetry axis $\{\eta=0,h=0\}$ with $\xi>0$ or at $\{\xi=0,h=0\}$ with $\eta>0$.

\end{corollary}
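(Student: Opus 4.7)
The plan is to promote each hyperbolic periodic orbit from Ansatz \ref{ansatz:periodic} to an invariant 2-torus of $\KK_{\CP}$ in the extended phase space, to assemble these tori into the cylinder $\Lambda_0$, and to transfer the hyperbolicity and the transverse homoclinic structure from the coplanar system to the extended one. The key simplification throughout is that $J$ is a first integral of $\KK_{\CP}$ (since $\KK_{\CP}$ is independent of $\OmegaM$), so on each hyperplane $\{J=J_0\}$ the flow decouples as the coplanar flow crossed with the uniform rotation $\OmegaM\mapsto\OmegaM+n_{\OmegaM}t$.

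First, I would fix $J_0 \in [J_{\min}, J_{\max}]$, set $E = -n_{\OmegaM} J_0$, and apply Ansatz \ref{ansatz:periodic} (Item 1) to obtain the hyperbolic periodic orbit $\PP_E(t)=(0,\Gam_E(t),0,h_E(t))$. Since $\dot{h}_E\neq 0$, the map $h_E$ is invertible on its fundamental period and $\Gam_E$ can be rewritten as an analytic function $\Gam_J^{\mathrm{ext}}(h)=\Gam_E(h_E^{-1}(h))$; the set
\[
T_{J_0} = \bigl\{\bigl(0,\Gam_{J_0}^{\mathrm{ext}}(h),J_0,0,h,\OmegaM\bigr) : (h,\OmegaM)\in\TT^2\bigr\}
\]
is then a 2-dimensional invariant torus of $\KK_{\CP}$ on which the flow is linear in $(h,\OmegaM)$ with frequencies $(\partial_{\Gam}\HH_{\CP},n_{\OmegaM})$. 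The smooth dependence of $\PP_E$ on $E$ granted by Ansatz \ref{ansatz:periodic} together with the implicit function theorem gives an analytic parametrization $J_0 \mapsto T_{J_0}$, so that $\Lambda_0 = \bigcup_{J_0\in[J_{\min},J_{\max}]} T_{J_0}$ is an analytic 3-dimensional cylinder diffeomorphic to $\TT^2 \times [J_{\min},J_{\max}]$.

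For the normal hyperbolicity of $\Lambda_0$ inside the energy level $\{\KK_{\CP}=0\}$, I would observe that $\KK_{\CP}-\HH_{\CP}=n_{\OmegaM}J$ depends only on $J$, so the linearization of $X_{\KK_{\CP}}$ at a point of $T_{J_0}$, restricted to the $(\eta,\xi)$ directions, agrees with that of $X_{\HH_{\CP}}$ along $\PP_E$ at the corresponding point. The non-trivial Floquet multipliers $e^{\pm \lambda(E) \TTT(\PP_E)}$ of $\PP_E$ supplied by Ansatz \ref{ansatz:periodic} then persist as the hyperbolic rates of $T_{J_0}$, yielding the splitting $T_{\Lambda_0}\{\KK_{\CP}=0\} = T\Lambda_0 \oplus E^{\sta}\oplus E^{\unst}$ with $\dim E^{\sta,\unst}=1$. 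Uniformity on $[J_{\min},J_{\max}]$ and the spectral gap condition \eqref{eq_largespectralgap} of Definition \ref{def:nhim} follow from the compactness of the energy interval together with the continuous dependence of the multipliers on $E$, so $\Lambda_0$ is $r$-normally hyperbolic for every $r\geq 1$. Fenichel theory then produces the 4-dimensional invariant manifolds $W^{\sta,\unst}(\Lambda_0)$, foliated over $J_0$ by the 3-dimensional leaves $W^{\sta,\unst}(T_{J_0})$.

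Transversality is checked fiberwise: on the 4-dimensional symplectic slice $\{J=J_0,\KK_{\CP}=0\}$ the dynamics decouples as the coplanar flow on $\{\HH_{\CP}=-n_{\OmegaM}J_0\}$ crossed with uniform rotation in $\OmegaM$, so $W^{\sta,\unst}(T_{J_0})=W^{\sta,\unst}(\PP_E)\times \TT_{\OmegaM}$. Ansatz \ref{ansatz:periodic} Item 2 supplies the transverse intersection $W^{\sta}(\PP_E) \pitchfork W^{\unst}(\PP_E)$ inside the 3-dimensional level $\{\HH_{\CP}=E\}$; taking the product with the common $\OmegaM$-direction, which lies in both tangent spaces, immediately upgrades this to transversality of the 3-dimensional manifolds $W^{\sta,\unst}(T_{J_0})$ inside the 4-dimensional slice. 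The choice between the symmetry axes $\{\eta=0,h=0,\xi>0\}$ and $\{\xi=0,h=0,\eta>0\}$ at each $J_0$ is dictated by the numerical verification of Item 2 and is made so as to avoid the six tangency values of $E$ identified along $\{\eta=0,h=0\}$. The main subtlety in this argument is purely dimensional bookkeeping, ensuring that transversality is interpreted inside the slice $\{\KK_{\CP}=0\}\cap\{J=J_0\}$ rather than in the full 6-dimensional phase space; no genuine analytical obstacle remains once Ansatz \ref{ansatz:periodic} is granted, because the $\OmegaM$-direction enters only as a trivial skew product.
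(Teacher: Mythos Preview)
Your proposal is correct and follows essentially the same approach as the paper: the paper likewise constructs $\Lambda_0$ by reparametrizing each $\PP_E$ via $t=h_E^{-1}(h)$ to obtain the tori $\{(0,\Gam_J^{\mathrm{ext}}(h),J,0,h,\OmegaM)\}$, invokes the implicit function theorem (using non-degeneracy of the hyperbolic periodic orbits) for analyticity in $J$, and declares the corollary a direct consequence of Ansatz~\ref{ansatz:periodic}. Your write-up supplies more explicit detail on the normal hyperbolicity splitting and the fiberwise transversality via the trivial $\OmegaM$-product, but the underlying argument is identical.
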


From now on, we will call primary homoclinic orbits (and denote them with a superindex $\prim$) those which intersect the symmetry axis $\{\eta=0,h=0\}$ with $\xi>0$ and we will call secondary homoclinic orbits (and denote them with a superindex $\secn$) those which intersect the symmetry axis $\{\xi=0,h=0\}$ with $\eta>0$. Indeed, the first ones only get close to the periodic orbit once while the second ones approach  it twice.

To analyze the dynamics on the cylinder and the scattering map associated with the transverse intersections of its invariant manifolds it is more convenient to consider a Poincar\'e map.
Therefore, we define a global Poincar\'e section and work with maps to reduce the dimension by one.

There are two natural choices of section, $\{h=0\}$ or $\{\OmegaM=0\}$, since both variables satisfy $\dot{h}\neq 0$ (see Theorem~\ref{thm:coplanar_periodic_orbits}) and $\dot{\Omega}_\rM= n_{\OmegaM} \neq 0$.
It would seem more natural to choose the section $\{\OmegaM=0\}$, however since we aim to prove drift on the energy $J$, it is more convenient to consider the section
\[
 \Sigma=\{h=0\}
\]
and the induced Poincar\'e map
\begin{equation}\label{def:PoincareMap}
 \PM_0:\Sigma\to\Sigma \qquad 
\end{equation}
Notice that this Poincar\'e section was already used in the numerical study in Section~\ref{sec:periodicorbits:coolanar}.
In addition, we denote the intersection of the cylinder $\Lambda_0$ with the section $\Sigma$ as
\[
\wt{\Lambda}_0 = \Lambda_0 \cap \Sigma.
\]
Notice that $\wt{\Lambda}_0$ is a 2-dimensional normally hyperbolic manifold for the Poincar\'e map $\PM_0$ with 3-dimensional  stable and unstable manifolds, which we denote $W^{\unst}(\wt{\Lambda}_0)$ and $W^{\sta}(\wt{\Lambda}_0)$ for $j=1,2$. They intersect at transverse homoclinic orbits which are just the those analyzed in Corollary \ref{corollary:NHIMiM0} intersected with the section $\Sigma$.
%

To fix notation, we denote by $\tD^{*}_0$, with $* \in \{\prim, \secn\}$, the subset of $[J_{\min},J_{\max}]$ 
for which there exist primary/secondary homoclinic orbits to the corresponding invariant torus in the cylinder.
%
By Corollary~\ref{corollary:NHIMiM0}, $\tD^{\prim}_0\cup \tD_0^{\secn} = [J_{\min},J_{\max}]$.

\begin{corollary}\label{corollary:NHIMiM0parametrized}
Assume Ansatz \ref{ansatz:periodic}.
The Poincar\'e map $\PM_{0}$ defined in \eqref{def:PoincareMap} and induced by Hamiltonian $\KK_{\CP}$ (see~\eqref{def:KHam:CP}) has a normally hyperbolic $2$-dimensional cylinder $\wt{\Lambda}_0$ foliated by invariant curves.
In addition, there exists an analytic function $\GG_0:[J_{\min},J_{\max}] \times \TT \to \RR^3 \times \TT^3$,
\begin{equation*}
\GG_0(J,\OmegaM) = \big(0,{\GG}_0^{\Gam}(J),J,0,0,\OmegaM\big),
\end{equation*}
that parametrizes $\wt{\Lambda}_0$,
\begin{equation*}
	\wt{\Lambda}_0 = \{ \GG_0(J,\OmegaM) \,:\, (J,\OmegaM) \in [J_{\min},J_{\max}] \times \TT  \}.
\end{equation*}
Moreover, for $* \in \{\prim, \secn\}$,
%
within the hypersurface $J=J_0$ with $J_0 \in \tD^{*}_0$, the invariant manifolds $W^{\unst}(\wt{\Lambda}_0)$ and $W^{\sta}(\wt{\Lambda}_0)$ intersect transversally at the symmetry axis of the involutions $\Phi^{\hor}$ (for $*=\prim$) and $\Phi^\ver$ (for $*=\secn$).

Let $\Xi_0^*$ denote these transverse intersections on the  symmetry axes.
Then, there exists an analytic function
$
\CCC^*_0 : \tD^*_0 \times \TT \to \RR^3 \times \TT^3,
$
such that
\begin{align*}
\CCC^{\prim}_0(J,\OmegaM) &= \Big(0,\CCC_0^{\Gam,\prim}(J),J,
\CCC_0^{\xi,\prim}(J),0,\OmegaM\Big),
\\
\CCC^{\secn}_0(J,\OmegaM) &= \Big(\CCC_0^{\eta,\secn}(J),\CCC_0^{\Gam,\secn}(J),J,
0,0,\OmegaM\Big),
\end{align*}
that parametrizes $\Xi_0^*$. That is,
\begin{equation*}
	\Xi^*_0 = \{ \CCC^*_0(J,\OmegaM) \,:\, (J,\OmegaM) \in [J_{\min},J_{\max}]\times \TT  \}.
\end{equation*}
\end{corollary}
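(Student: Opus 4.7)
The plan is to descend from the flow-level picture of Corollary~\ref{corollary:NHIMiM0} to the Poincar\'e map $\PM_0$ by slicing with $\Sigma=\{h=0\}$, and then to parametrize all relevant objects using $(J,\OmegaM)$ as coordinates, exploiting the fact that $J$ is a first integral of $\KK_\CP$ and that $\OmegaM$ is a cyclic variable.

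First I would verify that $\Sigma$ is a valid transverse section along $\Lambda_0$ and along the homoclinic manifold. From Ansatz~\ref{ansatz:periodic} Item 1, the period $\TTT(\PP_E)$ is finite and depends smoothly on $E$, and $h_E(t)$ rotates at a bounded rate, so $\dot h\neq 0$ everywhere on $\Lambda_0$; regularity then extends to a neighbourhood by continuity. Hence $\wt\Lambda_0=\Lambda_0\cap\Sigma$ is a smooth $2$-dimensional submanifold of $\Sigma$, and its stable/unstable manifolds are the respective flow-manifolds intersected with $\Sigma$. Normal hyperbolicity is inherited from that of $\Lambda_0$ under the flow in a standard way.

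Next I would construct $\GG_0$. A point of $\wt\Lambda_0$ has the form $(0,\Gam,J,0,0,\OmegaM)$ with $\KK_\CP=0$, i.e. $\HH_\CP(0,\Gam,0,0)=-n_{\OmegaM}J$. Since the circular periodic orbits of $\HH_\CP$ provided by Theorem~\ref{thm:coplanar_periodic_orbits} are hyperbolic and depend smoothly (indeed analytically) on $E\in[E_{\min},E_{\max}]$, the map $E\mapsto \Gam_E(0)$ is analytic with non-vanishing derivative (non-degeneracy of the hyperbolic orbit). The implicit function theorem then yields an analytic $\GG_0^\Gam(J)$ solving $\HH_\CP(0,\GG_0^\Gam(J),0,0)=-n_{\OmegaM}J$, and because $\KK_\CP$ is $\OmegaM$-independent, $\OmegaM$ enters only as a free coordinate. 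This gives $\GG_0(J,\OmegaM)=(0,\GG_0^\Gam(J),J,0,0,\OmegaM)$, and the invariance of the curves $\{J=J_0\}$ follows from the fact that $\PM_0$ commutes with $\OmegaM$-translations and preserves the first integral $J$.

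For the parametrization of the homoclinic sets, I would use Ansatz~\ref{ansatz:periodic} Item 2 together with the reversibility of $\HH_\CP$ under the involutions $\Phi^{\hor}$ and $\Phi^{\ver}$. By reversibility, the branches of $W^{\unst}(\PP_E)$ and $W^{\sta}(\PP_E)$ that give rise to primary (resp.\ secondary) homoclinic orbits can be chosen to hit the symmetry axis $\{\eta=0,h=0\}$ with $\xi>0$ (resp.\ $\{\xi=0,h=0\}$ with $\eta>0$) at some time $t^*(E)$. Define $\tD_0^*$ as the set of $J_0\in[J_{\min},J_{\max}]$ for which the corresponding intersection is transverse within the energy hypersurface; by Corollary~\ref{corollary:NHIMiM0} we have $\tD_0^{\prim}\cup\tD_0^{\secn}=[J_{\min},J_{\max}]$. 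At such a point, the stable and unstable manifolds cross transversally in the $3$-dimensional energy level inside $\Sigma$, so the implicit function theorem applied to the defining equations of $W^{\sta,\unst}(\wt\Lambda_0)$ produces an analytic function of $J$ giving the homoclinic coordinates $\CCC_0^{\Gam,*}(J)$ and either $\CCC_0^{\xi,\prim}(J)$ or $\CCC_0^{\eta,\secn}(J)$. Again, $\OmegaM$-invariance of $\KK_\CP$ extends these to analytic functions on $\tD_0^*\times\TT$ of the stated form.

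The main obstacle I anticipate is bookkeeping rather than conceptual: ensuring that the two parametrizations $\CCC_0^{\prim}$ and $\CCC_0^{\secn}$ together cover the whole cylinder, and that one can consistently choose the right branches of the invariant manifolds to produce analytic sections over $\tD_0^*$. This is exactly what Ansatz~\ref{ansatz:periodic} together with the numerical verification in Section~\ref{sec:periodicorbits:coolanar} is designed to supply: the six energy values at which the primary homoclinic becomes tangent lie in $\tD_0^{\secn}$ (the splitting angle at $\{\xi=0,h=0\}$ stays bounded away from $0$ there, see Table~\ref{tab:reldiff}), and vice versa, so the union indeed equals $[J_{\min},J_{\max}]$. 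Once the analyticity and transversality are established pointwise on $\tD_0^*$, the global analytic parametrizations $\GG_0$ and $\CCC_0^*$ follow.
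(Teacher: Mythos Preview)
Your proposal is correct and follows essentially the same route as the paper. The paper does not give a separate proof of this corollary: it is presented as a direct consequence of Corollary~\ref{corollary:NHIMiM0} and the discussion immediately preceding it (the slicing $\wt\Lambda_0=\Lambda_0\cap\Sigma$, the fact that $\dot h_E\neq 0$ allows one to write $t=h_E^{-1}(h)$, and the analyticity of the cylinder via the implicit function theorem applied to $\HH_\CP(0,\Gam,0,0)=-n_{\OmegaM}J$). Your write-up makes these implicit steps explicit, including the reversibility argument for locating the homoclinic points on the symmetry axes and the bookkeeping that $\tD_0^{\prim}\cup\tD_0^{\secn}=[J_{\min},J_{\max}]$.
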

The subscript $0$ in the previous structures and parameterizations indicates that we are dealing with the coplanar case, i.e., $\iM = 0$. 

Notice that Corollary~\ref{corollary:NHIMiM0} gives global coordinates for the cylinder $\wt{\Lambda}_0$. 
Moreover, these coordinates are symplectic with respect to the canonical symplectic form $d\OmegaM \wedge dJ$.
Indeed,  Corollary \ref{corollary:NHIMiM0parametrized} implies that at the cylinder $\wt{\Lambda}_0$ one has $\eta=\xi=h=0$ and $\Gam=\GG_0^{\Gam}(J)$. Then,  the pullback of the canonical form $d\xi\wedge d\eta + dh \wedge d \Gam + d \OmegaM \wedge d J$ to $\wt{\Lambda}_0$ is just $d \OmegaM \wedge d J$. 

Next, we consider the inner and the two scattering maps (for the primary and secondary homoclinic channels). The inner map is defined in the whole cylinder $\wt{\Lambda}_0$ whereas the scattering maps are defined in open domains of the cylinder where the associated homoclinic channels are transverse.
%
Since $J$ is conserved by the inner and scattering maps, these maps are integrable and the variables $(J,\OmegaM)$ are action-angle coordinates. 
In these variables, it will be easier to later understand the effect of the inclination $\iM$ on the inner and scattering maps.

\subsubsection{The coplanar inner map}

In this section we study the inner map, as introduced in~\eqref{def:innerMapPrevious}, restricted to the normally hyperbolic manifold $\wt{\Lambda}_0$.
In particular, the inner map $\FF_0^{\inner}:\wt{\Lambda}_0\to \wt{\Lambda}_0$ is defined as the Poincar\'e map $\PM_0$ in \eqref{def:PoincareMap} restricted to the symplectic invariant manifold $\wt{\Lambda}_0$.
We express $\FF_0^{\inner}$ using the global coordinates $(J,\OmegaM)$ of $\wt{\Lambda}_0$.
Since $J$ is an integral of motion and $\dot{\Omega}_{\rM} = n_{\OmegaM}$, the inner map has the form of
\begin{equation}\label{def:innerMapCoplanar}
	\FF_0^{\inner}: \begin{pmatrix}
		J \\ \OmegaM 
	\end{pmatrix}
	\mapsto
	\begin{pmatrix}
		J \\ \OmegaM + n_{\OmegaM}  
	\TTT_0(J)\end{pmatrix},
\end{equation}
where $\TTT_0(J)$ is the period of the periodic orbit obtained in Ansatz~\ref{ansatz:periodic} on the corresponding energy surface $E=-n_{\OmegaM}J$.

The following lemma is a direct consequence of Ansatz \ref{ansatz:periodic} (see Figure~\ref{fig:hamperiod}).

\begin{lemma} \label{corollary:innerMapCoplanar}
Assume Ansatz~\ref{ansatz:periodic}.
The analytic symplectic inner map $\FF_0^{\inner}$ defined in~\eqref{def:innerMapCoplanar} is twist, that is
\begin{equation*}
	\partial_J \TTT_0(J) < 0
	\quad \text{for} \quad J \in [J_{\min},J_{\max}].
\end{equation*}
In addition, there exists a unique $\Jres\in [J_{\min},J_{\max}]$ such that 
\[
n_{\Omega_\rM}\TTT_0(\Jres)=4\pi.
\]
Moreover,
\begin{equation*}
n_{\Omega_\rM}\TTT_0(J)\not\in 2\pi\ZZ\qquad \text{for all }\quad J \in [J_{\min},J_{\max}] \setminus \{\Jres\}.
\end{equation*}
\end{lemma}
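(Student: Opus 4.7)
The plan is to derive all three conclusions directly from Ansatz~\ref{ansatz:periodic}(1) after identifying $\TTT_0(J)$ with the period $\TTT(\PP_E)$ of the underlying periodic orbit at the appropriate energy level. The cylinder $\Lambda_0$ sits in the extended phase space and, by the construction recalled before Corollary~\ref{corollary:NHIMiM0}, the invariant torus labelled by $J$ is foliated by trajectories of $\HH_\CP$ on the energy surface $\HH_\CP=-n_{\OmegaM}J$ (so that $\KK_\CP=0$). Hence the first return time to $\Sigma=\{h=0\}$ from a point of $\wt{\Lambda}_0$ at action $J$ coincides with the period of $\PP_E$ at $E=-n_{\OmegaM}J$, giving the key identification
\begin{equation*}
\TTT_0(J) = \TTT(\PP_{-n_{\OmegaM}J}),\qquad J\in[J_{\min},J_{\max}].
\end{equation*}
The smoothness in $J$ is inherited from the smooth dependence of $\PP_E$ on $E$ stated in Ansatz~\ref{ansatz:periodic}(1).

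For the twist property, I would differentiate the identity above, obtaining
\begin{equation*}
\partial_J \TTT_0(J) = -n_{\OmegaM}\,\partial_E \TTT(\PP_E)\big|_{E=-n_{\OmegaM}J}.
\end{equation*}
Since $n_{\OmegaM}>0$ and Ansatz~\ref{ansatz:periodic}(1) asserts that $E\mapsto \TTT(\PP_E)$ is strictly increasing, so $\partial_E \TTT(\PP_E)>0$, this immediately yields $\partial_J \TTT_0(J)<0$ on the whole interval $[J_{\min},J_{\max}]$.

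For the existence and uniqueness of $\Jres$, the function $J\mapsto n_{\OmegaM}\TTT_0(J)$ is continuous and strictly decreasing (by the twist just established), hence a diffeomorphism onto its image. By Ansatz~\ref{ansatz:periodic}(1) this image is contained in $[3.9\pi,4.15\pi]$, which contains $4\pi$, so the intermediate value theorem produces a value $\Jres\in[J_{\min},J_{\max}]$ with $n_{\OmegaM}\TTT_0(\Jres)=4\pi$; strict monotonicity makes it unique. The final non-resonance statement then follows from the observation that $[3.9\pi,4.15\pi]$ contains a single integer multiple of $2\pi$, namely $4\pi$ itself, so $n_{\OmegaM}\TTT_0(J)\in 2\pi\ZZ$ forces $n_{\OmegaM}\TTT_0(J)=4\pi$, i.e.\ $J=\Jres$. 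There is no real obstacle here beyond careful bookkeeping of the sign in the chain rule $E=-n_{\OmegaM}J$; the whole lemma is essentially a repackaging of the quantitative information already encoded in Ansatz~\ref{ansatz:periodic}(1).
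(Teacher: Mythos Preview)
Your argument is correct and mirrors the paper's approach: the paper simply states that the lemma is a direct consequence of Ansatz~\ref{ansatz:periodic} (together with Figure~\ref{fig:hamperiod}), and you have written out exactly those implications via the identification $\TTT_0(J)=\TTT(\PP_{-n_{\OmegaM}J})$. The only small caveat is that the intermediate value step needs that $n_{\OmegaM}\TTT_0$ actually attains values on both sides of $4\pi$, which the paper reads off from the numerical Figure~\ref{fig:hamperiod} rather than from the bare inclusion $n_{\OmegaM}\TTT(\PP_E)\in[3.9\pi,4.15\pi]$.
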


This lemma is crucial to later, in Section~\ref{sec:pseudoorbit},  construct the heteroclinic connections between different tori of the perturbed cylinder (with $\iM>0$).

\subsubsection{The coplanar scattering maps}

In this section we study the scattering maps, as introduced in~\eqref{def:outerMapPrevious}, associated to the Poincar\'e map $\PM_0$ which sends $\wt{\Lambda}_0$ to itself by means of the homoclinic channels $\Xi_0^{\prim}$ and $\Xi_0^{\secn}$ (see Corollary~\ref{corollary:NHIMiM0parametrized}). 
For each homoclinic channel, we denote 
\begin{equation*}
	\FF^{\outer,\prim}_0: \wt{\Lambda}_0 \to \wt{\Lambda}_0,
	\qquad
	\FF^{\outer,\secn}_0: \wt{\Lambda}_0 \to \wt{\Lambda}_0.
\end{equation*}
Notice the abuse of notation since the scattering maps are only defined provided $J \in \tD_0^{\prim}$ or $J \in \tD_0^{\secn}$, respectively, and not in the whole
cylinder $\wt{\Lambda}_0$.

The scattering map is always exact symplectic, see \cite{delshams2008geometric}. 
So, since $J$ is preserved, the scattering maps must be of the form
\begin{equation}\label{def:outerMapCoplanar}
	\FF^{\outer,*}_0: \begin{pmatrix}
		J \\ \OmegaM
	\end{pmatrix}
	\mapsto
	\begin{pmatrix}
		J \\ \OmegaM + n_{\OmegaM} \zeta^*(J)
	\end{pmatrix},
 \quad \text{for} \quad
 *\in\{\prim,\secn\}.
\end{equation}
The functions $\zeta^*$ are usually called phase shift. Indeed the homoclinic orbits in $\Xi^*_0$ are homoclinic to a periodic orbit. However, they are asymptotic to different trajectories (that is, different phases) in the periodic orbit.

To compute the phase shifts $\zeta^*$, it is convenient to deal with a flow instead of the associated Poincar\'e map, because then one can rely on Melnikov Theory. However, the outer map induced by the flow associated to the Hamiltonian~\eqref{def:KHam:CP} does not preserve the section $\{h=0\}$ and, therefore, one cannot deduce the outer map associated with the Poincar\' e map $\PM_0$ from that of the flow. 
Then, we reparameterize the flow so that its scattering maps preserve this section. This reparameterization corresponds to identifying the variable $h$ with time $t$ and is given by
\begin{equation}\label{eq:ODEreparametrized}
	\begin{aligned}
		 \frac{d\xi}{ds} &= \frac{
			\partial_{\eta}\HH_{\CP}(\eta,\Gam,\xi,h)}{
			\partial_{\Gam} \HH_{\CP}(\eta,\Gam,\xi,h)}, 
		&
		\frac{d\eta}{ds} &= -\frac{
			\partial_{\xi}\HH_{\CP}(\eta,\Gam,\xi,h)}{
			\partial_{\Gam} \HH_{\CP}(\eta,\Gam,\xi,h)},
		\\
		\frac{dh}{ds} &= 1,
		&
		 \frac{d\Gam}{ds} & = -\frac{
			\partial_{h}\HH_{\CP}(\eta,\Gam,\xi,h)}{
			\partial_{\Gam} \HH_{\CP}(\eta,\Gam,\xi,h)}, 
		\\
		 \frac{d\OmegaM}{ds} &= \frac{n_{\OmegaM}}{\partial_{\Gam} \HH_{\CP}(\eta,\Gam,\xi,h)},
		&
		\frac{dJ}{ds} &= 0.
	\end{aligned}
\end{equation}
Note that the right hand side of the system of equations~\eqref{eq:ODEreparametrized} does not depend on $\OmegaM$. 
Let  $\fCPE\{s,(\eta,\Gam,J,\xi,h,\OmegaM)\} $ be the  flow associated to equation~\eqref{eq:ODEreparametrized} and $\fCP\{s,(\eta,\Gam,\xi,h)\}$ be the flow associated with its $(\eta,\Gam,\xi,h)$ components.
%
Componentwise, for the reduced and extended phase space, it can be written as
\begin{equation} \label{def:flowParametrizedCoplanar}
   \begin{aligned}
\fCP\{s,(\eta,\Gam,\xi,h)\} 
=
\big(&
\fC^{\eta}\{s,(\eta,\Gam,\xi,h)\},
\fC^{\Gam}\{s,(\eta,\Gam,\xi,h)\}, \\
&\fC^{\xi}\{s,(\eta,\Gam,\xi,h)\},
h+s
\big),
\\
\fCPE\{s,(\eta,\Gam,J,\xi,h,\OmegaM)\} 
=
\big(&
\fC^{\eta}\{s,(\eta,\Gam,\xi,h)\},
\fC^{\Gam}\{s,(\eta,\Gam,\xi,h)\},
J, \\
&\fC^{\xi}\{s,(\eta,\Gam,\xi,h)\},
h+s,
\OmegaM + \fC^{\OmegaM}\{s,(\eta,\Gam,\xi,h)\}
\big).
\end{aligned} 
\end{equation}

In addition, we denote the trajectories departing from points either in the normally hyperbolic cylinder or in one of the homoclinic channels, respectively, as
\begin{equation}\label{def:flowParametrizedCoplanarAuxiliar}
\begin{aligned}
\gamma_J(s)
&=
\varphi_{\CP}\{s,(0,\GG_0^{\Gam}(J),0,0)\},
&
\chi^{\prim}_J(s)
&=
\varphi_{\CP}\{s,(0,\CCC_0^{\Gam,\prim}(J),\CCC_0^{\xi,\prim}(J),0)\}, 
\\
& &
\chi^{\secn}_J(s)
&=
\varphi_{\CP}\{s,(\CCC_0^{\eta,\secn}(J),\CCC_0^{\Gam,\secn}(J),0,0)\}.
\end{aligned}
\end{equation}

\begin{lemma} \label{lemma:integralsScattering}
	Assume Ansatz \ref{ansatz:periodic}.
	The functions $\zeta^*(J)$ involved in the definition of the outer maps in \eqref{def:outerMapCoplanar} are given by
\[
\zeta^*(J) = \zeta^*_+(J) - \zeta^*_-(J).
\]
where
\begin{equation*}
\zeta^*_+(J) = - \zeta^*_-(J) =
\lim_{N \to +\infty} \sum_{l=0}^{N-1}  \left(\int_{2\pi l}^{2\pi (l+1)}
\frac{d s}{\partial_{\Gam}\HH_{\CP} \circ \chi^*_J(s)}
-  \TTT_0(J)
\right).
\end{equation*}
\end{lemma}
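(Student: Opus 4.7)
The plan is to identify $\zeta^*(J)$ as a Melnikov-type improper integral along the homoclinic orbit $\chi_J^*$, using the $h$-reparameterization~\eqref{eq:ODEreparametrized} to reduce the $\OmegaM$-drift to a one-variable computation. Since $\OmegaM$ is cyclic for $\KK_{\CP}$, the action $J$ is a first integral; combined with the exact symplecticity of the scattering map, this forces the form~\eqref{def:outerMapCoplanar}, leaving only the scalar phase shift $\zeta^*$ to be computed. Fix a homoclinic point $Q\in\Xi_0^*$ with foot-points $Q_{\pm}=\pi^{\sta,\unst}(Q)\in\wt{\Lambda}_0$, and denote the $\OmegaM$-coordinates of $Q$ and $Q_{\pm}$ by $\OmegaM_0$ and $\OmegaM_{\pm}$. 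By Definition~\ref{def:scattering}, $n_{\OmegaM}\zeta^*(J)=\OmegaM_+-\OmegaM_-$, so setting $n_{\OmegaM}\zeta^*_{\pm}(J):=\OmegaM_{\pm}-\OmegaM_0$ gives the decomposition $\zeta^*=\zeta^*_+-\zeta^*_-$, and the task reduces to identifying each $\zeta^*_{\pm}$.

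For this, integrate the last equation of~\eqref{eq:ODEreparametrized} along $\chi_J^*$ and along the cylinder trajectories through $Q_{\pm}$. Since $\HH_{\CP}$ does not depend on $\OmegaM$, the $(\eta,\Gam,\xi,h)$-projection of a cylinder trajectory at action $J$ is exactly $\gamma_J(s)$ from~\eqref{def:flowParametrizedCoplanarAuxiliar}, independently of the initial $\OmegaM$; hence
\[
\OmegaM^{\mathrm{hom}}(s)-\OmegaM_0=n_{\OmegaM}\int_0^s\frac{d\tau}{\partial_{\Gam}\HH_{\CP}\circ\chi_J^*(\tau)},\qquad \OmegaM^{\mathrm{cyl}}_{\pm}(s)-\OmegaM_{\pm}=n_{\OmegaM}\int_0^s\frac{d\tau}{\partial_{\Gam}\HH_{\CP}\circ\gamma_J(\tau)}.
\]
By the hyperbolicity provided by Ansatz~\ref{ansatz:periodic} and the $\lambda$-lemma, $\chi_J^*(s)$ converges exponentially to a time-shifted copy of $\gamma_J(s)$ as $s\to+\infty$, so $\OmegaM^{\mathrm{hom}}(s)-\OmegaM^{\mathrm{cyl}}_{+}(s)\to 0$; subtracting the two displayed identities at $s$, passing to the limit, and using that $\partial_{\Gam}\HH_{\CP}$ is bounded away from zero in a neighbourhood of $\wt{\Lambda}_0$ to justify convergence of the resulting improper integral, yields
\[
n_{\OmegaM}\zeta^*_{+}(J) = n_{\OmegaM}\lim_{s\to+\infty}\int_0^s\left(\frac{1}{\partial_{\Gam}\HH_{\CP}\circ\chi_J^*(\tau)}-\frac{1}{\partial_{\Gam}\HH_{\CP}\circ\gamma_J(\tau)}\right)d\tau,
\]
with the analogous identity for $\zeta^*_-$ as $s\to-\infty$.

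To recover the announced sum, split $[0,+\infty)$ into the intervals $[2\pi l,2\pi(l+1)]$ and use that $\gamma_J$ is $2\pi$-periodic in $s$ together with the elementary identity
\[
\int_{2\pi l}^{2\pi(l+1)}\frac{d\tau}{\partial_{\Gam}\HH_{\CP}\circ\gamma_J(\tau)}=\TTT_0(J),
\]
which just relates the period in the $s$-variable to the period in the original time. For the symmetry $\zeta^*_-=-\zeta^*_+$, note that the reparameterized system~\eqref{eq:ODEreparametrized} is equivariant under $(s,P)\mapsto(-s,\Phi^{\hor}(P))$ for the primary channel and under $(s,P)\mapsto(-s,\Phi^{\ver}(P))$ for the secondary one — a short direct check using $\HH_{\CP}\circ\Phi^{\hor,\ver}=\HH_{\CP}$ and the fact that both involutions fix $\Gam$. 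By Corollary~\ref{corollary:NHIMiM0parametrized}, $\chi_J^*(0)$ lies in the corresponding fixed set, so $\chi_J^*(-s)=\Phi^{\hor,\ver}(\chi_J^*(s))$ and both $\partial_{\Gam}\HH_{\CP}\circ\chi_J^*$ and $\partial_{\Gam}\HH_{\CP}\circ\gamma_J$ are even in $s$; the change of variable $\tau\mapsto-\tau$ then gives $\int_{-2\pi N}^{0}=\int_0^{2\pi N}$ of the integrand, whence $\zeta^*_-=-\zeta^*_+$. The only delicate point in the argument is the uniform exponential convergence of $\chi_J^*$ to the time-shifted $\gamma_J$ needed to interchange limit and integral, which is standard from the hyperbolic structure guaranteed by Ansatz~\ref{ansatz:periodic}.
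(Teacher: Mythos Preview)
Your proof is correct and follows essentially the same route as the paper: integrate the $\OmegaM$-equation of the reparameterized flow~\eqref{eq:ODEreparametrized} along the homoclinic and along the periodic orbit, take the limit of the difference to extract $\zeta^*_{\pm}$, split into $2\pi$-windows using the period identity $\int_{2\pi l}^{2\pi(l+1)}(\partial_\Gam\HH_{\CP}\circ\gamma_J)^{-1}\,d\tau=\TTT_0(J)$, and obtain $\zeta^*_-=-\zeta^*_+$ from the reversibility symmetries $\Phi^{\hor},\Phi^{\ver}$. Your write-up is slightly more explicit than the paper's in two places (you name the $\lambda$-lemma for the exponential synchronization of $\chi_J^*$ with the forward/backward asymptotic $\gamma_J$, and you spell out the evenness argument for both channels, whereas the paper treats only the primary case and says ``proceed analogously''), but there is no substantive difference in strategy.
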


In Figure~\ref{fig:ham_phaseshift}-left, we plot the function $\zeta^{\prim}(J)$ for the case under study. This figure prompts us to assume the following ansatz, which will be used in Section \ref{sec:pseudoorbit}. Roughly speaking it will ensure that the inner and outer maps do not have simultaneous resonances at their first orders in $\iM$.

\begin{ansatz} \label{ansatz:phaseshiftouter}
For $J=\Jres$, the value introduced in Lemma \ref{corollary:innerMapCoplanar}, the function $\zeta^{\prim}$  satisfies that 
\[
\zeta^{\prim}(\Jres)\not\in \pi\ZZ\qquad \text{and}\qquad \partial_J {\zeta^{\prim}}(\Jres)\neq 0,
\]
\end{ansatz}

\begin{figure}[t]
\begin{center}
\includegraphics[width=8.2cm]{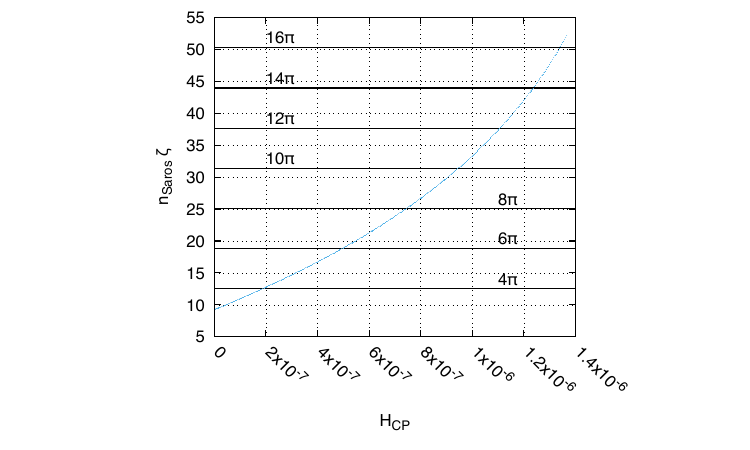} \hspace{-2cm}
\includegraphics[width=8.2cm]{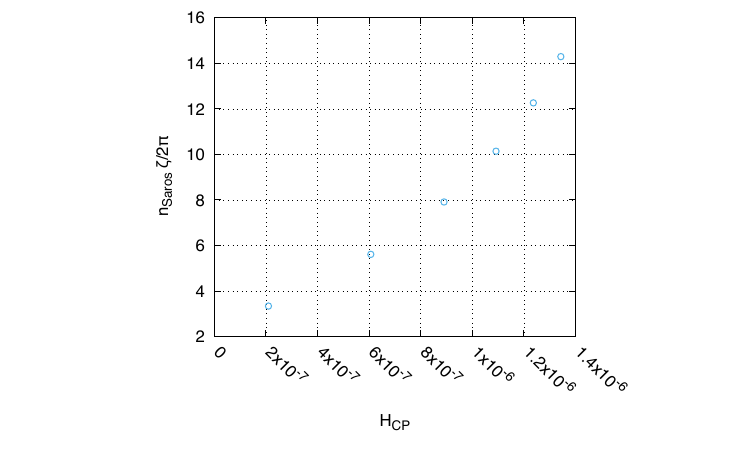}
\end{center}
\caption{As a function of ${\HH}_{\CP}\equiv -n_{\OmegaM} J$, we show on the left $n_{\Saros}\zeta^{\prim}(J)$. Here $n_{\Saros}\equiv n_{\OmegaM}$. Note that at  ${\HH}_{\CP}=4.4472\times 10^{-7}$  corresponding to the existence of a secondary resonance (see Fig.~\ref{fig:hamperiod} on the right), $n_{\Saros}\zeta^{\prim}(J)/\pi\approx 5.61$. On the right, $n_{\Saros}\zeta^{\secn}(J)/2\pi$ for the non-transverse values (see Table~\ref{tab:reldiff}). The integral of 
Lemma~\ref{lemma:integralsScattering} has been computed by means of the function {\tt qags} of the  {\tt quadpack} Fortran package. 
}
\label{fig:ham_phaseshift}
\end{figure}

Note that in  Figure~\ref{fig:ham_phaseshift}-left we only depict the horizontal lines at $\zeta^{\prim}=2\pi k$ instead of $\zeta=\pi k$ to have a clear picture. 
In Figure~\ref{fig:ham_phaseshift}-right, we plot the functions $\zeta^{\secn}(J)$ for the values of $J \in  [J_{\min},J_{\max}]$ where the primary homoclinic channel is not transverse (see Table~\ref{tab:reldiff}).

\begin{proof}[Proof of Lemma~\ref{lemma:integralsScattering}]
Since the flow component $(\fCPE)^{\OmegaM}$ is of the form $\OmegaM+\fC^{\OmegaM}$ with $\fC^{\OmegaM}$ independent of $\OmegaM$, its behavior is given by
\begin{equation}
\label{def:fundamentalFlowTime}
\varphi^{\OmegaM}\{s, (\eta,\Gam,\xi,h)\}
=
n_{\OmegaM}
\int_0^s \frac{ds}{\partial_{\Gam} \HH_{\CP} \circ \varphi_{\CP}\{s,(\eta,\Gam,\xi,h)\}}.
\end{equation}
First, we obtain an integral expression for the period $\TTT_0(J)$ given in \eqref{def:innerMapCoplanar}. 
Indeed, since the inner map is just the $2\pi$-time map of the flow $\fCPE$ for the components $(J,\OmegaM)$, by Corollary~\ref{corollary:NHIMiM0parametrized} one has that,
\[
 \TTT_0(J) = \fC^{\OmegaM}\{2\pi,(0,\GG_0^{\Gam}(J),0,0)\}
\]
and, by \eqref{def:flowParametrizedCoplanarAuxiliar} and \eqref{def:fundamentalFlowTime},
\begin{align}
\label{def:periodeEquacio}
\TTT_0(J)
&=
\int_0^{2\pi} 
\frac{ds}{\partial_{\Gam} \HH_{\CP} \circ \gamma_J(s)}.
\end{align}

Let us now consider the primary scattering map. One has  the homoclinic point
\[
\CCC_0^{\prim}(J,\OmegaM)= (0,\CCC_0^{\Gam,\prim}(J),J,\CCC_0^{\xi,\prim}(J),0,\OmegaM) \in 
W^{\unst,1}({\Lambda}_0)\cap W^{\sta,1}({\Lambda}_0)
\cap \{h=0\},
\]
(see Corollaries~\ref{corollary:NHIMiM0} and~\ref{corollary:NHIMiM0parametrized}).
Since the $(\eta,\Gam,\xi,h)$ components are independent of $\OmegaM$, this point is forward asymptotic (in the reparametrized time) to a point
\begin{align*}
Q_+ = \big( 0, \GG_0^{\Gam}(J), J,
0, 0, \OmegaM + n_{\OmegaM} \zeta^{\prim}_+(J) \big),
\end{align*}
and backward asymptotic to a point
\begin{align*}
Q_- = \big( 0, \GG_0^{\Gam}(J), J,
0, 0, \OmegaM + n_{\OmegaM} \zeta^{\prim}_-(J) \big).
\end{align*}
The scattering map corresponds to the application $Q_- \mapsto Q_+$ (see Definition~\ref{def:scattering}). 
Then, by the expression of the map $\FF^{\outer,\prim}_0$ in  \eqref{def:outerMapCoplanar}, one has that
\[
\zeta^{\prim}(J) = \zeta^{\prim}_+(J) - \zeta^{\prim}_-(J).
\]
By the definition of $\zeta_+^{\prim}(J)$, using definitions~\eqref{def:flowParametrizedCoplanarAuxiliar} of $\chi_J^{\prim}, \gamma_J$ and applying \eqref{def:fundamentalFlowTime}, one obtains 
\begin{align}
\zeta^{\prim}_+ (J)
&=
\frac1{n_{\OmegaM}}
\lim_{s\to +\infty}
\left( \varphi^{\OmegaM} \{s,(0,\CCC_0^{\Gam,\prim}(J),\CCC_0^{\xi,\prim},0)\} 
- \varphi^{\OmegaM} \{s, (0,\GG_0^{\Gam}(J),0,0)\}\right)
\label{proof:defzetaplus}
\\
&=\lim_{s \to +\infty} \int_0^s \left(
\frac{d\sigma}{\partial_{\Gam}\HH_{\CP} \circ \chi^{\prim}_J(\sigma)}
- \frac{d\sigma}{\partial_{\Gam}\HH_{\CP} \circ \gamma_J(\sigma)}
\right). \nonumber
\end{align}
Since system~\eqref{eq:ODEreparametrized} is $2\pi$-periodic in $s$, due to the identification of $s$ with $h$, it is more convenient to write these integrals as
\begin{align*}
\zeta^{\prim}_+(J) 
&=
\lim_{N \to +\infty}  \int_0^{2\pi N} \left(
\frac{d s}{\partial_{\Gam}\HH_{\CP} \circ \chi^{\prim}_J(s)}
- \frac{d s}{\partial_{\Gam}\HH_{\CP} \circ \gamma_J(s)}
\right),
\end{align*}
and, taking into account \eqref{def:periodeEquacio}, one has that
\begin{align*}
\zeta^{\prim}_+(J) 
&=
\lim_{N \to +\infty} \sum_{l=0}^{N-1} \int_{2\pi l}^{2\pi (l+1)} \left(
\frac{ds}{\partial_{\Gam}\HH_{\CP} \circ \chi^{\prim}_J(s)}
- \TTT_0(J)
\right).
\end{align*}

Let us recall that $\HH_{\CP}$ is reversible with respect to $\Phi^{\hor}(\eta,\Gam,\xi,h) = (-\eta,\Gam,\xi,-h)$ (see~\eqref{def:symmetryPoincare}).
Therefore, the flow satisfies $\fCP\{s,(0,\Gam,\xi,0)\} = \Phi^{\hor} \circ \fCP\{-s,(0,\Gam,\xi,0)\}$.
Then, one can see that $\fC^{\OmegaM}\{s,(0,\Gam,\xi,0)\} = - \fC^{\OmegaM}\{-s,(0,\Gam,\xi,0)\}$.
As a result,
\begin{align}
\zeta^{\prim}_-(J) 
&=
\frac1{n_{\OmegaM}} \lim_{s\to -\infty}
\left( \varphi^{\OmegaM} \{s,(0,\CCC_0^{\Gam,\prim}(J),\CCC_0^{\xi,\prim},0)\} 
- \varphi^{\OmegaM} \{s, (0,\GG_0^{\Gam}(J),0,0)\}\right)
\label{proof:defzetaminus}
\\
&= -\zeta^{\prim}_+(J). \nonumber
\end{align}
We proceed analogously for the secondary scattering map.
\end{proof}

\section{Dynamics of the system with \texorpdfstring{$\iM>0$}{iM>0}}
\label{sec:fullham}

In this section, we consider the  Hamiltonian $\KK$ in \eqref{def:HamExtPhsSp} and  assume $0<\iM\ll 1$. Since we want to compare its dynamics with that of $\KK_{\CP}$ (see also \eqref{def:KHam:CP}), we write $\KK$ as 
\begin{equation}\label{def:HamiM0PoincExtended}
	\begin{aligned}
		\KK(\eta,\Gam,J,\xi,h,\OmegaM;\iM) 
  &=\HH(\eta,\Gam,\xi,h,\OmegaM;\iM) + n_{\OmegaM}J\\
		&= \HH_{\CP}(\eta,\Gam,\xi,h) +  \al^3 \iM \RRR (\eta,\Gam,\xi,h, \OmegaM;\iM)
		+ n_{\OmegaM}J,
	\end{aligned}
\end{equation}
where $\HH_{\CP}$ has been introduced in \eqref{def:Ham:Coplanar}. 
See Appendix~\ref{appendix:HamiltonianPoincare} for the whole expression of $\RRR$.

\subsection{Perturbative analysis of the inner and scattering maps}

The first step is to compute perturbatively the inner and outer maps. 
To this end, we apply Poincar\'e-Melnikov techniques.
As done in the previous section (see \eqref{eq:ODEreparametrized}),  we reparameterize the flow so that the variable $h$ becomes ``time''. Namely,
\begin{equation}\label{eq:ODEreparametrizedMoon}
	\begin{aligned}
		\frac{d\xi}{ds} &= \frac{
			\partial_{\eta}\HH(\eta,\Gam,\xi,h,\OmegaM;\iM)}{
			\partial_{\Gam} \HH(\eta,\Gam,\xi,h,\OmegaM;\iM)}, 
		&
		\frac{d\eta}{ds} &= -\frac{
			\partial_{\xi}\HH(\eta,\Gam,\xi,h,\OmegaM;\iM)}{
			\partial_{\Gam} \HH(\eta,\Gam,\xi,h,\OmegaM;\iM)},
		\\
		\frac{dh}{ds} &= 1,
		&
		\frac{d\Gam}{ds} & = -\frac{
			\partial_{h}\HH(\eta,\Gam,\xi,h,\OmegaM;\iM)}{
			\partial_{\Gam} \HH(\eta,\Gam,\xi,h,\OmegaM;\iM)}, 
		\\
		\frac{d\OmegaM}{ds} &= \frac{n_{\OmegaM}}{\partial_{\Gam} \HH(\eta,\Gam,\xi,h,\OmegaM;\iM)}
		&
		\frac{dJ}{ds} &= -\al^3\iM\frac{\partial_{\OmegaM} \RRR(\eta,\Gam,\xi,h,\OmegaM;\iM)}{\partial_{\Gam} \HH(\eta,\Gam,\xi,h,\OmegaM;\iM)}.
	\end{aligned}
\end{equation}
This system is $\OO(\iM)$-close to  \eqref{eq:ODEreparametrized}.
Analogously, we consider the Poincar\'e map associated with this system,
\begin{align}\label{def:PoincareMapMoon}
    \PM_{\iM} : \Sigma \to \Sigma,\quad \Sigma=\{h=0\},
\end{align}
which is a $\OO(\iM)$-perturbation of the map $\PM_0$ given in \eqref{def:PoincareMap}.  

We study the system given by Hamiltonian $\KK$ (see \eqref{def:HamiM0PoincExtended}) as a perturbation of the one given by $\KK_{\CP}$ (see \eqref{def:KHam:CP}).
By classical Fenichel Theory (see \cite{fenichel1971persistence}) and Corollary~\ref{corollary:NHIMiM0}, one has that the flow associated to the Hamiltonian $\KK$ has a normally hyperbolic invariant $3$-dimensional cylinder which is $\iM$-close to the cylinder $\Lambda_0$.
Analogously, by Corollary~\ref{corollary:NHIMiM0parametrized}, the Poincar\'e map $\PM_{\iM}$ has a normally hyperbolic invariant $2$-dimensional cylinder at section $\Sigma$.

Recall that, also by Corollary~\ref{corollary:NHIMiM0parametrized}, the cylinder $\wt{\Lambda}_0$ possesses  two homoclinic channels which we denote by $\prim$ and $\secn$, which are defined for  $J\in \tD^{*}_0\subset [J_{\min},J_{\max}]$  for $*\in \{\prim,\secn\}$. Moreover, the sets $\tD^{*}_0$ satisfy that $\tD_0^{\prim} \cup \tD_0^{\secn} =  [J_{\min},J_{\max}]$.
%

For the full model, with $\iM>0$ small enough, we still have transverse homoclinic connections but in slightly smaller domains. In order to characterize them, we define 
\begin{equation} \label{def:domainsJPerturbed}
\tD^*_{\delta} = \left\{ J \in \tD^*_0 \, : \, 
\mathrm{dist}(J,\partial\tD^*_0) > \delta \right\},
\quad \text{ for } *\in \{\prim,\secn\}.
\end{equation}
Analogously, we also define 
\[
\tD_{\delta} = [J_{\min}+\delta,J_{\max}-\delta].
\]
Next theorem, which is a direct consequence of the classical Fenichel theory and Corollary~\ref{corollary:NHIMiM0parametrized}, summarizes the perturbative context.
\begin{theorem}
\label{theorem:invariantObjectsPerturbed}
Assume Ansatz \ref{ansatz:periodic}.
%
%
For any $\de>0$ and $r\geq 4$, there exists $\iM^0>0$ such that, for any $\iM\in (0,\iM^0)$, the map $\PM_{\iM}$ introduced in~\eqref{def:PoincareMapMoon} has the following properties.
\begin{enumerate}
\item It has a $\mathcal{C}^r$ normally hyperbolic invariant manifold $\wt{\Lambda}_{\iM,\de}$, which is $\iM$-close in the $\CCC^1$-topology to $\wt{\Lambda}_0$.
In addition, there exists a  function $\GG_{\iM}:\tD_{\delta}\times \TT \to \RR^3 \times \TT^3$ of the form
\begin{equation*}
	\GG_{\iM}(J,\OmegaM) = 
	 \big(\GG_{\iM}^{\eta}(J,\OmegaM),{\GG}_{\iM}^{\Gam}(J,\OmegaM),J,
	 \GG_{\iM}^{\xi}(J,\OmegaM),0,\OmegaM\big),
\end{equation*}
which   parameterizes  the cylinder $\wt{\Lambda}_{\iM,\de}$   as a graph, that is
\begin{equation*}
	\wt{\Lambda}_{\iM,\de} = \{ \GG_{\iM}(J,\OmegaM) \,:\, (J,\OmegaM) \in \tD_{\delta} \times \TT  \}.
\end{equation*}
\item There exists a nonvanishing $\mathcal{C}^{r-1}$ function, $a_{\iM}$, such that the pull back of the canonical form $d\xi\wedge d\eta+dh \wedge d\Gam+d\OmegaM\wedge dJ$ onto the cylinder $\wt{\Lambda}_{\iM,\de}$ is of the form 
\begin{equation}\label{def:formaSimplecticaInclined}
    a_{\iM}(J,\OmegaM) d\OmegaM \wedge d J.
\end{equation}

%
%

\item The homoclinic channels obtained in Corollary \ref{corollary:NHIMiM0parametrized}
 are persistent. That is, there exist $\mathcal{C}^r$ functions
$
\CCC^*_{\iM} : \tD^*_{\delta} \times \TT \to \RR^3 \times \TT^3,
$
\[
\CCC^*_{\iM}(J,\OmegaM) = \Big(
\CCC^{\eta,*}_{\iM}(J,\OmegaM),\CCC_{\iM}^{\Gam,*}(J,\OmegaM),J,
\CCC^{\xi,*}_{\iM}(J,\OmegaM),0,\OmegaM\Big),
\]
with $*\in\{\prim,\secn\}$, such that the manifolds
\begin{equation*}
	\Xi^*_{\iM, \de} = \{ \CCC^*_{\iM}(J,\OmegaM) \,:\, (J,\OmegaM) \in \tD^*_{\delta} \times \TT  \},
\end{equation*}
 belong to the transverse intersections between the  invariant manifolds $W^{\unst}(\wt{\Lambda}_{\iM,\de})$ and $W^{\sta}(\wt{\Lambda}_{\iM,\de})$. Moreover, the functions $\CCC^*_{\iM}$ are $\CCC^r$ regular with respect to $\iM$ and are $\mathcal{O}(\iM)$ to the function $\CCC^*_{0}$ obtained in Corollary \ref{corollary:NHIMiM0parametrized}. 
%
\end{enumerate}
\end{theorem}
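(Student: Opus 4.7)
The plan is to apply classical Fenichel theory~\cite{fenichel1971persistence} to the $\iM$-dependent Poincar\'e map $\PM_{\iM}$, viewing it as a smooth perturbation of $\PM_0$, and then transfer the structural information (graph parameterization, symplectic form, homoclinic channels) from the coplanar picture in Corollary~\ref{corollary:NHIMiM0parametrized}.

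First, I would handle the boundary issue. The unperturbed cylinder $\wt{\Lambda}_0$ provided by Corollary~\ref{corollary:NHIMiM0parametrized} is compact with boundary (the values $J=J_{\min}$ and $J=J_{\max}$). Fenichel theory in its standard form requires overflowing invariance, so I would first extend $\wt{\Lambda}_0$ slightly outside $[J_{\min},J_{\max}]$ to a locally invariant manifold $\wt{\Lambda}_0^{\mathrm{ext}}$ (using the analytic dependence of the periodic orbits on energy given by Ansatz~\ref{ansatz:periodic}), fix $\de>0$, and restrict attention to $J\in \tD_\de$ once the perturbation is done. Since the ODE~\eqref{eq:ODEreparametrizedMoon} is a $\CCC^\infty$ (indeed analytic) $\OO(\iM)$-perturbation of~\eqref{eq:ODEreparametrized}, the induced Poincar\'e map $\PM_{\iM}$ is $\CCC^r$-close to $\PM_0$ for any $r$, provided $\iM$ is small enough. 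Fenichel's persistence theorem then yields a $\CCC^r$ normally hyperbolic invariant manifold $\wt{\Lambda}_{\iM,\de}$ which is $\OO(\iM)$-close to $\wt{\Lambda}_0\cap\{J\in\tD_\de\}$ in the $\CCC^1$ topology; taking $\iM^0$ sufficiently small (depending on $\de$ and $r$) gives the required smoothness.

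Next, I would obtain the graph parameterization. Since at $\iM=0$ the cylinder is parametrized as $(0,\GG_0^{\Gam}(J),J,0,0,\OmegaM)$ with $J$ and $\OmegaM$ global coordinates and the map $(J,\OmegaM)\mapsto \wt{\Lambda}_0$ is an embedding transverse to the hyperbolic fibers, $\CCC^1$-proximity of $\wt{\Lambda}_{\iM,\de}$ to $\wt{\Lambda}_0$ implies that the projection onto $(J,\OmegaM)$ remains a diffeomorphism. Hence the implicit function theorem produces a unique $\CCC^r$ graph parameterization $\GG_{\iM}$ of the stated form with $\GG_{\iM}=\GG_0+\OO(\iM)$. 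The fact that the $h$-component is exactly $0$ simply reflects that $\wt{\Lambda}_{\iM,\de}\subset \Sigma=\{h=0\}$.

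For the symplectic form statement, I would pull back the canonical 2-form $\om=d\xi\wedge d\eta+dh\wedge d\Gam+d\OmegaM\wedge dJ$ on the $6$-dimensional ambient space to $\wt{\Lambda}_{\iM,\de}$ via $\GG_{\iM}$. Since $\wt{\Lambda}_{\iM,\de}$ is two-dimensional, the pullback is automatically of the form $a_{\iM}(J,\OmegaM)\,d\OmegaM\wedge dJ$ for some $\CCC^{r-1}$ function $a_{\iM}$ (being a closed 2-form coming from a symplectic form on the ambient space, it is, a fortiori, a smooth $2$-form on the 2-manifold). At $\iM=0$ one has $a_0\equiv 1$ because $\GG_0$ only involves the coordinates $\Gam,J,\OmegaM$ with $\xi=\eta=h=0$, so the pullback reduces to $d\OmegaM\wedge dJ$. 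Continuity in $\iM$ therefore guarantees $a_{\iM}$ is nonvanishing for $\iM$ small enough (possibly shrinking $\iM^0$).

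Finally, for the homoclinic channels I would appeal to the persistence of transverse intersections. By Corollary~\ref{corollary:NHIMiM0parametrized}, for $J\in \tD_0^*$, the manifolds $W^{\unst}(\wt{\Lambda}_0)$ and $W^{\sta}(\wt{\Lambda}_0)$ intersect transversally within $\{J=J_0\}$ along the symmetry axis corresponding to $*$. By Fenichel theory the perturbed invariant manifolds $W^{\unst}(\wt{\Lambda}_{\iM,\de}),W^{\sta}(\wt{\Lambda}_{\iM,\de})$ are $\CCC^r$-close to their unperturbed counterparts on compact pieces. Restricting to $\tD^*_\de$ (so that the unperturbed transverse intersection is bounded away from tangencies), the implicit function theorem applied to the transversality equations yields $\CCC^r$ families of homoclinic points $\CCC^*_{\iM}(J,\OmegaM)$ that coincide with $\CCC_0^*$ at $\iM=0$ and depend $\CCC^r$ on $\iM$; they have $h$-component $0$ by construction (these are intersections in $\Sigma$). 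The main obstacle here, and the reason we need to exclude the $\de$-neighborhoods of $\partial \tD_0^*$, is that at those boundary points the unperturbed transversality degenerates (recall Figure~\ref{fig:splittingh0hpi} and Table~\ref{tab:reldiff}); within $\tD^*_\de$ the splitting is bounded below, and standard quantitative arguments give $\iM^0=\iM^0(\de,r)$ for which the conclusion holds.
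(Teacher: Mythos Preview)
Your proposal is correct and follows exactly the approach the paper takes: the paper does not give a detailed argument but simply states that the theorem ``is a direct consequence of the classical Fenichel theory and Corollary~\ref{corollary:NHIMiM0parametrized}''. Your sketch fills in the standard details (boundary extension for overflowing invariance, graph parameterization by $\CCC^1$-closeness, pullback of the symplectic form reducing to $d\OmegaM\wedge dJ$ at $\iM=0$, and persistence of transverse homoclinic intersections away from the tangency values), all of which are precisely the ingredients the paper has in mind.
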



We can define the inner and scattering maps in the invariant cylinder $\wt{\Lambda}_{\iM,\de}$ given in Theorem~\ref{theorem:invariantObjectsPerturbed} as we have done in Lemmas~\ref{corollary:innerMapCoplanar} and~\ref{lemma:integralsScattering}, respectively, for the coplanar case ($\iM=0$). 
We also compute for them first order expansions. To this end,  we consider the following definition.

\begin{definition}\label{def:numberharmonics}
Let $f$ be a $2\pi$-periodic function in $\OmegaM$ and denote by $f^{[k]}$ its $k$-th Fourier coefficient. 
Then, we define the set 
\[
\NNN_{\OmegaM}(f) = \{ k \in \ZZ \, : \, f^{[k]} \not\equiv 0 \}.
\]
\end{definition} 

Let us recall that $\HH = \HH_{\CP} + \al^3 \iM \RRR$ (see \eqref{def:HamiM0PoincExtended})
and that $\HH_{_\CP}$, the coplanar Hamiltonian, is independent of $\OmegaM$. However, the remainder $\RRR$ has the following harmonic structure 
\begin{align*} 
\RRR(\eta,\Gam,\xi,h,\OmegaM;\iM) =& \,
\RRR_1(\eta,\Gam,\xi,h,\OmegaM) +\iM\RRR_2(\eta,\Gam,\xi,h,\OmegaM)+ \OO(\iM^2),
\end{align*}
with
\begin{equation*} 
\NNN_{\OmegaM}(\RRR_1) = \{\pm 1\} \qquad \text{and}\qquad\NNN_{\OmegaM}(\RRR_2) = \{0,\pm 2\}.  
\end{equation*}
This is a consequence of the particular formulas of $\RRR$ in  Appendix~\ref{appendix:HamiltonianPoincare} (see equations~\eqref{def:RRR1appendix} and \eqref{def:harmonicsRRR2Appendix}). To fix notation, we write $\RRR_1$ as
\begin{equation*} 
\RRR_1(\eta,\Gam,\xi,h,\OmegaM)=e^{i\OmegaM} \RRR_{1}^+(\eta,\Gam,\xi,h) + e^{-i\OmegaM} \RRR_{1}^-(\eta,\Gam,\xi,h).
\end{equation*}

%


Note that $ \RRR_{1}^-= \overline{\RRR_{1}^+}$. We analyze the harmonic structure of all the objects involved in the definition of the inner and scattering maps. The first step is to study the asymptotic expansion with respect to $\iM$ of the flow associated to the vector field~\eqref{eq:ODEreparametrizedMoon}.

\begin{lemma}
\label{lemma:harmonicsFlowInclined}
Let $\fiMC\{s, (\eta,\Gamma,J,\xi,h,\OmegaM)\}$ be the flow associated with the vector field in~\eqref{eq:ODEreparametrizedMoon} induced by the Hamiltonian $\KK$ in~\eqref{def:HamiM0PoincExtended}.
Then, for $\iM>0$ small enough, it has an expansion
	\begin{align*}
		\fiMC\{s, (\eta,\Gamma,J,\xi,h,\OmegaM)\}
		=& \,
		\fCPE\{s, (\eta,\Gamma,J,\xi,h,\OmegaM)\}
		+ \iM \fiM_1\{s, (\eta,\Gamma,J,\xi,h,\OmegaM)\}
		\\
		&+ \iM^2 \fiM_2\{s, (\eta,\Gamma,J,\xi,h,\OmegaM)\} 
		+ \OO(\iM^3),
	\end{align*}
where $\fCPE$ has been introduced in \eqref{def:flowParametrizedCoplanar} and the remaining functions 
satisfy that
\begin{align*}
	\NNN_{\OmegaM} (\fiM_1\{s, (\eta,\Gamma,J,\xi,h,\OmegaM)\}) &= \{\pm 1\}, \\
	\NNN_{\OmegaM} (\fiM_2\{s, (\eta,\Gamma,J,\xi,h,\OmegaM)\}) &= \{0, \pm 2\}.
\end{align*}
\end{lemma}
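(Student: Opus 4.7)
The plan is to obtain the expansion by Taylor–expanding the vector field \eqref{eq:ODEreparametrizedMoon} in $\iM$ and then solving the variational equations order by order, tracking the harmonic content in $\OmegaM$ at each step. First I would write the right-hand side of \eqref{eq:ODEreparametrizedMoon} as $V(z;\iM)=V_0(z)+\iM V_1(z)+\iM^2 V_2(z)+\OO(\iM^3)$, where $z=(\eta,\Gam,J,\xi,h,\OmegaM)$ and $V_0$ is the vector field associated to the coplanar flow $\fCPE$. Expanding the quotients with $\partial_\Gam\HH=\partial_\Gam\HH_\CP+\alpha^3\iM\,\partial_\Gam\RRR_1+\OO(\iM^2)$ and using the geometric series identity, one checks that $V_1$ is built linearly from $\RRR_1$ and from the partial of $\RRR$ with respect to $\OmegaM$ at order $\iM^0$, while $V_2$ is built from $\RRR_2$ and from products/squares of $\RRR_1$ (and its derivatives) with coefficients depending only on $\HH_\CP$. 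Since $\HH_\CP$ is $\OmegaM$-independent and $\NNN_{\OmegaM}(\RRR_1)=\{\pm1\}$, $\NNN_{\OmegaM}(\RRR_2)=\{0,\pm2\}$, convolution of Fourier series immediately gives
\[
\NNN_{\OmegaM}(V_1)=\{\pm1\},\qquad \NNN_{\OmegaM}(V_2)\subseteq\{0,\pm2\}.
\]

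Next I would plug the ansatz $\fiMC\{s,z\}=\fCPE\{s,z\}+\iM\,\fiM_1\{s,z\}+\iM^2\,\fiM_2\{s,z\}+\OO(\iM^3)$ into $\dot z=V(z;\iM)$ and identify powers of $\iM$. This yields the standard variational equations
\[
\dot{\fiM}_1=DV_0(\fCPE)\,\fiM_1+V_1(\fCPE),\qquad \fiM_1\{0,z\}=0,
\]
and
\[
\dot{\fiM}_2=DV_0(\fCPE)\,\fiM_2+DV_1(\fCPE)\,\fiM_1+\tfrac12 D^2V_0(\fCPE)[\fiM_1,\fiM_1]+V_2(\fCPE),
\]
with $\fiM_2\{0,z\}=0$. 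Solving by variation of parameters gives an explicit integral representation of $\fiM_1$ and $\fiM_2$ in terms of the fundamental matrix $\Psi(s,\sigma)$ of the linearization along $\fCPE$.

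The decisive observation, which is really the heart of the argument, is that $V_0$ does not depend on the state $\OmegaM$, and hence neither does $\Psi(s,\sigma)$ nor $D^kV_0(\fCPE(\sigma))$ depend on the initial value $\OmegaM$ (recall from \eqref{def:flowParametrizedCoplanar} that the first five components of $\fCPE\{s,z\}$ are $\OmegaM$-independent, while the sixth enters as $\OmegaM+\fC^{\OmegaM}$). Therefore, when evaluating $V_1(\fCPE\{\sigma,z\})$, each factor $e^{\pm i\OmegaM}$ in $\RRR_1$ becomes $e^{\pm i\OmegaM}e^{\pm i\fC^{\OmegaM}(\sigma)}$, preserving the harmonic $\pm1$ in the initial angle $\OmegaM$. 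Integrating against $\Psi$ (which has harmonic $0$) keeps $\NNN_{\OmegaM}(\fiM_1)=\{\pm1\}$. For $\fiM_2$, the term $DV_1\cdot\fiM_1$ and $D^2V_0[\fiM_1,\fiM_1]$ give harmonics in $\{\pm1\}+\{\pm1\}=\{0,\pm2\}$, while $V_2(\fCPE)$ contributes $\{0,\pm2\}$ directly; combining these yields $\NNN_{\OmegaM}(\fiM_2)\subseteq\{0,\pm2\}$.

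The main (mild) obstacle is not analytical difficulty but bookkeeping: one must verify that the expansions of $\partial_\Gam\HH$ in the denominators, together with the last equation $dJ/ds=-\alpha^3\iM\,\partial_{\OmegaM}\RRR/\partial_\Gam\HH$, produce exactly the harmonic content claimed at orders $\iM$ and $\iM^2$, without introducing spurious harmonics from $\RRR_2$ at order $\iM$ or from higher-order remainders. Once the expansion of $V$ and the fact that $V_0$ is $\OmegaM$-cyclic are in hand, the conclusion for the flow follows from variation of parameters and the Fourier convolution rule on each component separately. Smoothness in $s$ and in $z$ is inherited from $\HH$ and standard ODE dependence, which legitimizes the formal Taylor expansion to the claimed order.
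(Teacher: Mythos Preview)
Your proposal is correct and is precisely the ``classical perturbation theory jointly with the special form of~\eqref{eq:ODEreparametrizedMoon}'' that the paper invokes: the paper does not give a detailed proof but refers to Lemma~3.6 in \cite{fejoz2016kirkwood}, whose argument is exactly the variational-equation / variation-of-parameters computation you outline, together with the observation that $V_0$ is $\OmegaM$-cyclic and Fourier convolution. One small remark: your argument yields $\NNN_{\OmegaM}(\fiM_j)\subseteq\{\pm 1\}$ and $\subseteq\{0,\pm 2\}$ rather than equality, but the inclusion is all that is used downstream (and all that the referenced argument actually proves).
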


The proof of this lemma is analogous to that of Lemma 3.6 in \cite{fejoz2016kirkwood}, and relies on classical perturbation theory jointly with the special form of~\eqref{eq:ODEreparametrizedMoon}.

\begin{lemma}
\label{lemma:harmonicParametrizations}
Assume Ansatz~\ref{ansatz:periodic}. 
Let $\GG_{\iM}$,  $a_{\iM}$ and $\CCC_{\iM}^{*}$, for $*\in\{\prim, \secn\}$, be as given in Theorem~\ref{theorem:invariantObjectsPerturbed}.
Then, for $\iM>0$ small enough, they have an asymptotic expansion of the form
\begin{align*}
    \GG_{\iM}(J,\OmegaM) &= \GG_0(J,\OmegaM) + \iM \GG_1(J,\OmegaM) +
    \iM^2 \GG_2(J,\OmegaM) + \OO(\iM^3),
    \\
       a_{\iM}(J,\OmegaM) &=
   1 + \iM a_1(J,\OmegaM) + \iM^2 a_2(J,\OmegaM) +  \OO(\iM^3),\\
    \CCC_{\iM}^*(J,\OmegaM) &= \CCC_0^*(J,\OmegaM) + \iM \CCC_1^*(J,\OmegaM) +
    \iM^2 \CCC_2^*(J,\OmegaM) + \OO(\iM^3),
\end{align*}
where $\GG_0$ and $\CCC_0^*$ have been introduced in Corollary~\ref{corollary:NHIMiM0parametrized} and the remaining functions satisfy that
\begin{align*}
    \NNN_{\OmegaM}(\GG_1) = \{\pm 1\}, 
    \qquad
    \NNN_{\OmegaM}(\GG_2) = \{0, \pm 2\}, 
    \\
    \NNN_{\OmegaM}(a_1) = \{\pm 1\}, 
    \qquad
    \NNN_{\OmegaM}(a_2) = \{0, \pm 2\}, 
    \\
    \NNN_{\OmegaM}(\CCC_1^*) = \{\pm 1\},
    \qquad
    \NNN_{\OmegaM}(\CCC_2^*) = \{0, \pm 2\},
\end{align*}
 and that, for $i=1,2$,
\begin{align*}
    \GG_i (J,\OmegaM) &= 
    \big(\GG_i^{\eta}(J,\OmegaM), \GG_i^{\Gamma}(J,\OmegaM), 0,
     \GG_i^{\xi}(J,\OmegaM),0,0\big),
     \\
     \CCC_i^{*}(J,\OmegaM) &=
     \big(\CCC_i^{\eta,*}(J,\OmegaM), \CCC_i^{\Gamma,*}(J,\OmegaM), 0,
     \CCC_i^{\xi,*}(J,\OmegaM), 0,0\big).
\end{align*}
\end{lemma}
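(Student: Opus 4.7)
The plan is to derive each expansion as a Taylor series in $\iM$ by inserting it into the functional equations that characterize the objects (invariance for $\GG_{\iM}$, intersection with symmetry axes for $\CCC^*_{\iM}$, and the pullback formula for $a_{\iM}$) and to read off the harmonic content order by order from Lemma~\ref{lemma:harmonicsFlowInclined}. The existence of smooth $\iM$-expansions to any finite order is already granted by Theorem~\ref{theorem:invariantObjectsPerturbed}, so the whole content of the statement lies in the Fourier-mode bookkeeping. The underlying mechanism is the ``module'' property: a product of functions whose Fourier spectra lie in $\NNN_1$ and $\NNN_2$ has spectrum in $\NNN_1+\NNN_2$, so combining factors with spectrum $\{\pm1\}$ yields spectrum in $\{0,\pm 2\}$, and similarly $\{\pm 1\}\cdot \{0,\pm 2\}\subset \{\pm 1, \pm 3\}$, etc.

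First I would fix the cylinder. Write $\GG_{\iM}(J,\OmegaM)=(U(J,\OmegaM;\iM),J,V(J,\OmegaM;\iM),0,\OmegaM)$ where $U=(\GG^{\eta}_{\iM},\GG^{\Gam}_{\iM})$ and $V=\GG^{\xi}_{\iM}$; the $J,h,\OmegaM$ components are automatic since the cylinder is parameterized by $(J,\OmegaM)$ and lies in $\{h=0\}$, so only the $(\eta,\Gam,\xi)$ components carry perturbative corrections. Invariance under $\PM_{\iM}$ gives a functional equation of the form $\GG_{\iM}\circ F_{\iM} = \PM_{\iM}\circ \GG_{\iM}$, where $F_{\iM}$ is the inner map on $(J,\OmegaM)$; using Lemma~\ref{lemma:harmonicsFlowInclined} and the fact that $\PM_{\iM}$ is the $2\pi$-time map of $\fiMC$ restricted to $\Sigma$, expanding in $\iM$ yields, at order $\iM^k$, a linear cohomological equation of the form
\begin{equation*}
\mathcal{L}\,\GG_k = \mathcal{N}_k(\GG_0,\ldots,\GG_{k-1},\fiM_1,\fiM_2),
\end{equation*}
where $\mathcal{L}$ is the linearization around the unperturbed cylinder (which is $\OmegaM$-independent because $\KK_{\CP}$ is) and $\mathcal{N}_k$ is built from the expansion of $\fiMC$. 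Since $\mathcal{L}$ acts diagonally on Fourier modes in $\OmegaM$, $\GG_k$ has the same spectrum as $\mathcal{N}_k$. For $k=1$, $\mathcal{N}_1$ is linear in $\fiM_1$, hence has spectrum $\{\pm 1\}$; for $k=2$, $\mathcal{N}_2$ is linear in $\fiM_2$ (spectrum $\{0,\pm 2\}$) and quadratic in $\fiM_1$ (spectrum in $\{\pm 1\}+\{\pm 1\}=\{0,\pm 2\}$), so $\NNN_{\OmegaM}(\GG_2)\subset\{0,\pm 2\}$ as required. Hyperbolicity of the cylinder makes $\mathcal{L}$ invertible on the relevant graph bundle, so the recursion is well defined.

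For the homoclinic channels, I would characterize $\Xi^*_{\iM,\de}$ as the intersection of $W^{\sta}(\wt{\Lambda}_{\iM,\de})\cap W^{\unst}(\wt{\Lambda}_{\iM,\de})$ with the respective symmetry axis ($\{\eta=0,h=0,\xi>0\}$ for $*=\prim$, $\{\xi=0,h=0,\eta>0\}$ for $*=\secn$); again the $J,h,\OmegaM$ components of $\CCC^{*}_{\iM}$ remain pinned ($h=0$ from the section, $J$ and $\OmegaM$ from the choice of parameters, with the constraint $\eta=0$ or $\xi=0$ from the symmetry). The corrections $\CCC^*_i$ are obtained by classical Melnikov/graph-transform arguments: the stable and unstable manifolds admit parameterizations of the form $\Psi^{\sta,\unst}_{\iM}$ solving suitable invariance equations, each of which inherits the same inductive Fourier structure from Lemma~\ref{lemma:harmonicsFlowInclined} by the same product-of-spectra argument as above. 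Solving $\Psi^{\sta}_{\iM} = \Psi^{\unst}_{\iM}$ at the axis is a linear equation on Fourier modes at each order, so $\NNN_{\OmegaM}(\CCC^*_1)\subset\{\pm 1\}$ and $\NNN_{\OmegaM}(\CCC^*_2)\subset\{0,\pm 2\}$. The function $a_{\iM}$ is then computed from the explicit formula for the pullback of $d\xi\wedge d\eta+dh\wedge d\Gam+d\OmegaM\wedge dJ$ by $\GG_{\iM}$, which is a polynomial in the partial derivatives of $\GG_{\iM}^{\eta},\GG_{\iM}^{\xi},\GG_{\iM}^{\Gam}$ with respect to $(J,\OmegaM)$; the harmonic structure of $a_1$ and $a_2$ then drops out from that of $\GG_1,\GG_2$ by the same spectrum-of-product rule.

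The main obstacle I anticipate is not the algebra at a single order but the need to justify uniformly (in the $\mathcal{C}^r$ topology on $\tD_\delta\times\TT$ or $\tD^*_\delta\times\TT$) that the invariance/intersection equations produce smooth remainders in $\iM$ belonging to the expected function spaces, so that the Taylor expansions and the identification of Fourier modes are legitimate. This requires combining the quantitative $\mathcal{C}^r$-smoothness of the invariant manifolds from Fenichel theory (Theorem~\ref{theorem:invariantObjectsPerturbed}) with the fact that the nonlinear operators involved preserve trigonometric polynomial spaces, and that the linearized operators $\mathcal{L}$ are invertible with bounds independent of $\iM$ thanks to hyperbolicity. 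Once this functional-analytic framework is set up, the harmonic claims reduce to the spectra of $\fiM_1$ and $\fiM_2$ given by Lemma~\ref{lemma:harmonicsFlowInclined} together with the additive/multiplicative structure of $\NNN_{\OmegaM}$, exactly as in \cite{fejoz2016kirkwood} for the analogous statement on the Kirkwood gaps problem.
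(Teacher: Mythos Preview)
The paper does not actually supply a proof of Lemma~\ref{lemma:harmonicParametrizations}; it is stated and then immediately used. The only hint is the remark attached to the companion Lemma~\ref{lemma:harmonicsFlowInclined}, namely that ``the proof of this lemma is analogous to that of Lemma~3.6 in \cite{fejoz2016kirkwood}, and relies on classical perturbation theory jointly with the special form of~\eqref{eq:ODEreparametrizedMoon}.'' Your proposal is precisely a fleshed-out version of that same strategy: expand the invariance equation $\PM_{\iM}\circ\GG_{\iM}=\GG_{\iM}\circ\FF^{\inner}_{\iM}$ (and the corresponding intersection equations for $\CCC^*_{\iM}$) in powers of $\iM$, feed in the harmonic content of $\fiM_1,\fiM_2$ from Lemma~\ref{lemma:harmonicsFlowInclined}, and propagate the spectrum using $\NNN_{\OmegaM}(f\cdot g)\subset\NNN_{\OmegaM}(f)+\NNN_{\OmegaM}(g)$ together with the fact that the unperturbed linearization $\mathcal{L}$ is $\OmegaM$-independent and hence Fourier-diagonal. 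You also cite \cite{fejoz2016kirkwood} explicitly, so your approach is exactly the one the authors have in mind.

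One small remark: the invariance equation you wrote for the cylinder, $\GG_{\iM}\circ F_{\iM}=\PM_{\iM}\circ\GG_{\iM}$, is in fact used verbatim in the paper's proof of Theorem~\ref{theorem:PerturbedMaps} (see \eqref{proof:homologicalEquationsInnerOuterMap}), where the authors expand both sides in $\iM$ and collect orders just as you describe. So your outline is not only consistent with the paper's intended method but is essentially the computation the paper carries out a few lines later for a different purpose.
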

Lemmas \ref{lemma:harmonicsFlowInclined} and \ref{lemma:harmonicParametrizations} allow to compute asymptotic expansions in $\iM>0$  for the inner and scattering maps of the inclined model. 
First, we introduce  notation (see \eqref{def:flowParametrizedCoplanarAuxiliar}) for the evolution of the coplanar flow for points on the normally hyperbolic invariant cylinder $\wt{\Lambda}_0$ and on the homoclinic channels $\Xi^* _0$ (see Corollary~\ref{corollary:NHIMiM0parametrized}):

\begin{equation}\label{def:auxiliarFunctionsPerturbedMaps}
\begin{aligned}
\gamma_J(s)
&=
\varphi_{\CP}\{s,(0,\GG_0^{\Gam}(J),0,0)\},
&
\chi^{\prim}_J(s)
&=
\varphi_{\CP}\{s,(0,\CCC_0^{\Gam,\prim}(J),\CCC_0^{\xi,\prim}(J),0)\}, 
\\
& &
\chi^{\secn}_J(s)
&=
\varphi_{\CP}\{s,(\CCC_0^{\eta,\secn}(J),\CCC_0^{\Gam,\secn}(J),0,0)\}, 
\\[0.5em]
\wt{\gamma}_J(s)
&=
\varphi^{\OmegaM}\{s,(0,\GG_0^{\Gam}(J),0,0)\},
&
\wt{\chi}^{\prim}_J(s)
&=
\varphi^{\OmegaM}\{s,(0,\CCC_0^{\Gam,\prim}(J),\CCC_0^{\xi,\prim}(J),0)\}, 
\\
& &
\wt{\chi}^{\secn}_J(s)
&=
\varphi^{\OmegaM}\{s,(\CCC_0^{\eta,\secn}(J),\CCC_0^{\Gam,\secn}(J),0,0)\}, 
\end{aligned}
\end{equation}
where $\varphi_{\CP}$ and $\varphi^{\OmegaM}$ have been introduced in \eqref{def:flowParametrizedCoplanar}.

\begin{theorem}
\label{theorem:PerturbedMaps}
Assume Ansatz \ref{ansatz:periodic}.
%
%
Fix $\delta>0$ and $\iM>0$ small enough. 
The normally hyperbolic manifold $\wt{\Lambda}_{\iM,\de}$ given in Theorem~\ref{theorem:invariantObjectsPerturbed} of the map $\PM_{\iM}$ (see~\eqref{def:PoincareMapMoon}) has associated inner and outer maps which are $\CCC^r$ (also with respect to $\iM$) and are of the following form.
\begin{itemize}
    \item The inner map is of the form
        \begin{equation*}
	\FF^{\inner}_{\iM} \begin{pmatrix}
    J 
  \\ \Omega_{M} 
	\end{pmatrix}
	=
	\begin{pmatrix}
	J + \iM A_1(J,\OmegaM) + \iM^2 A_2(J,\OmegaM) 
    + \OO(\iM^3)
  \\ 
  \Omega_{M} + n_{\OmegaM} \left\{\TTT_0(J)
  + \iM \TTT_1(J,\OmegaM) + \iM^2 \TTT_2(J,\OmegaM) 
  + \OO(\iM^3) \right\}
	\end{pmatrix},
\end{equation*}
where $(J,\OmegaM) \in \tD_\delta \times \mathbb{T}$, the function $\TTT_0$ has been introduced in  \eqref{def:innerMapCoplanar} and the functions $A_1, A_2, \TTT_1$ and $\TTT_2$ satisfy that
\begin{align*}
	\NNN_{\OmegaM}(A_1),\NNN_{\OmegaM}(\TTT_1) &= \{\pm 1\},  & \NNN_{\OmegaM}(A_2),\NNN_{\OmegaM}(\TTT_2)  &= \{0, \pm 2\}.
\end{align*}
Moreover, $A_1$ is of the form
\begin{align*}
	A_1(J,\OmegaM) &=  A_{1}^+(J)e^{i\OmegaM} +  A_{1}^-(J)e^{-i\OmegaM},
\end{align*}
where
\begin{align*}
    A_1^{\pm}(J) &= \mp i \al^3 \int_0^{2\pi} \frac{\RRR_1^{\pm}(\gamma_J(s))}{\partial_{\Gam} \HH_{\CP}(\gamma_J(s))} e^{\pm i\wt{\gamma}_J(s)} ds. 
\end{align*}
\item For  $*\in\{\prim,\secn\}$, the scattering map is of the form
\begin{equation*} 
	\FF^{\outer,*}_{\iM} \begin{pmatrix}
    J 
  \\ \Omega_{M} 
	\end{pmatrix}
	=
	\begin{pmatrix}
 	J + \iM B_1^*(J,\OmegaM) +\iM^2 B_2^*(J,\OmegaM) + \OO(\iM^3) 
  \\ 
  \Omega_{M} + n_{\OmegaM} \left\{ \zeta^*(J)  +\iM D_1^*(J,\OmegaM) + \iM^2 D_2^*(J,\OmegaM) +\OO(\iM^3) \right\}
	\end{pmatrix},
\end{equation*}
where $(J,\OmegaM) \in \tD^*_\delta \times \mathbb{T}$, the function $\zeta^*(J) = \zeta^*_+(J) - \zeta^*_-(J)$ has been introduced in Lemma~\ref{lemma:integralsScattering} and
 the functions $B^*_1, B^*_2, D^*_1$ and $D^*_2$ satisfy that
\begin{align*}
	\NNN_{\OmegaM}(B_1^*),\NNN_{\OmegaM}(D_1^*) &= \{\pm 1\},  & \NNN_{\OmegaM}(B_2^*),\NNN_{\OmegaM}(D_2^*)  &= \{0, \pm 2\}.
\end{align*}
Moreover, the functions $B_1^*$ are of the form
\begin{align*}
    B_1^*(J, \OmegaM) =  B_1^{*,+}(J)e^{i\OmegaM} + B_1^{*,-}(J) e^{-i\OmegaM},
\end{align*}
where 
\begin{align*}
    B_1^{*,\pm}(J) 
    = 
     \pm i\alpha^3\lim_{s \to -\infty} \Bigg[&
    \int_0^s \frac{\RRR_1^{\pm}(\chi_J^*(\sigma))}{\partial_{\Gam}\HH_{\CP}(\chi^*_J(\sigma))}
    e^{\pm i(\wt{\chi}^*_J(\sigma)+n_{\OmegaM}\zeta_+^*(J))} d\sigma
    \\
    &- \int_0^s \frac{\RRR_1^{\pm}(\gamma_J(\sigma))}{\partial_{\Gam}\HH_{\CP}(\gamma_J(\sigma))}
    e^{\pm i\wt{\gamma}_J(\sigma) } d\sigma
    \Bigg]
    \\
    \mp i\alpha^3\lim_{s \to +\infty} \Bigg[&
    \int_0^s \frac{\RRR_1^{\pm}(\chi_J^*(\sigma))}{\partial_{\Gam}\HH_{\CP}(\chi^*_J(\sigma))}
    e^{\pm i(\wt{\chi}^*_J(\sigma) + n_{\OmegaM}\zeta_+^*(J))} d\sigma
    \\
    &- \int_0^s \frac{\RRR_1^{\pm}(\gamma_J(\sigma))}{\partial_{\Gam}\HH_{\CP}(\gamma_J(\sigma))}
    e^{\pm i(\wt{\gamma}_J(\sigma) + 2 n_{\OmegaM}\zeta_+^*(J))} d\sigma
    \Bigg].
\end{align*}
\end{itemize}
\end{theorem}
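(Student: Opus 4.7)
The plan is to derive all stated expansions by integrating the vector field~\eqref{eq:ODEreparametrizedMoon} along trajectories on and homoclinic to the invariant cylinder, while carefully propagating the Fourier structure in $\OmegaM$ provided by Lemmas~\ref{lemma:harmonicsFlowInclined} and~\ref{lemma:harmonicParametrizations}. The $\CCC^r$ regularity in $(J,\OmegaM)$ and in $\iM$ of both the inner and scattering maps will follow directly from the analogous regularity of the flow $\fiMC$ and of the parametrizations $\GG_{\iM}$, $\CCC^{*}_{\iM}$ given in Theorem~\ref{theorem:invariantObjectsPerturbed}, together with the standard implicit function construction of the wave maps.

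For the inner map, I would exploit the key feature of the parametrization $\GG_{\iM}$ in Theorem~\ref{theorem:invariantObjectsPerturbed}: its third and sixth components are precisely $J$ and $\OmegaM$. Hence $\FF^{\inner}_{\iM}(J,\OmegaM)$ is obtained by reading off the $(J,\OmegaM)$-components of $\PM_{\iM}(\GG_{\iM}(J,\OmegaM))$, and since $\PM_{\iM}$ is the time-$2\pi$ map of $\fiMC$, the computation reduces to integrating
\begin{equation*}
\frac{dJ}{ds}=-\al^3\iM\frac{\partial_{\OmegaM}\RRR}{\partial_{\Gam}\HH},\qquad \frac{d\OmegaM}{ds}=\frac{n_{\OmegaM}}{\partial_{\Gam}\HH}
\end{equation*}
from $s=0$ to $s=2\pi$. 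At leading order in $\iM$ the relevant trajectory is the coplanar one $(\gamma_J(s),J,\OmegaM+\wt{\gamma}_J(s))$, and substituting $\RRR_1=e^{i\OmegaM}\RRR_1^{+}+e^{-i\OmegaM}\RRR_1^{-}$ produces the stated Melnikov expressions for $A_1^{\pm}(J)$; the $\OmegaM$-component reproduces $n_{\OmegaM}\TTT_0(J)$ at leading order via~\eqref{def:periodeEquacio}.

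For the scattering maps I would use Definition~\ref{def:scattering}. Given the homoclinic point $\CCC^{*}_{\iM}(J,\OmegaM)$ and its forward and backward asymptotic cylinder orbits, I obtain coordinates $(J_{\pm}(J,\OmegaM;\iM),\OmegaM_{\pm}(J,\OmegaM;\iM))$ that at zeroth order reduce to $(J,\OmegaM+n_{\OmegaM}\zeta^{*}_{\pm}(J))$ by Lemma~\ref{lemma:integralsScattering}. The first-order variation of $J_+-J_-$ in $\iM$ is then obtained by integrating $dJ/ds$ along $\chi^{*}_J$ and subtracting the corresponding inner contribution along $\gamma_J$, with the $\OmegaM$-argument of the latter correctly shifted by $n_{\OmegaM}\zeta^{*}_{\pm}(J)$ to match the asymptotic leaves (mimicking the bookkeeping in~\eqref{proof:defzetaplus}--\eqref{proof:defzetaminus}). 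This yields the stated formula for $B_1^{*,\pm}$; the phase shifts $D_1^{*}$ follow from the analogous computation on the $\OmegaM$-component. The harmonic content of every $A_j,\TTT_j,B_j^{*},D_j^{*}$ then follows by order-by-order bookkeeping: at order $\iM^1$ all inputs arise from $\RRR_1$, $\GG_1$, $\CCC_1^{*}$, each with $\NNN_{\OmegaM}=\{\pm 1\}$, yielding $\{\pm 1\}$; at order $\iM^2$ the contributions are either $\iM^2$ inputs with $\NNN_{\OmegaM}=\{0,\pm 2\}$, or products of two $\iM^1$ inputs whose joint harmonic set is $\{\pm 1\}+\{\pm 1\}=\{0,\pm 2\}$.

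The main obstacle is the convergence of the two summands defining $B_1^{*,\pm}$: individually each diverges as $s\to\pm\infty$, but their difference is absolutely convergent thanks to the normal hyperbolicity of $\wt{\Lambda}_0$, which forces $\chi^{*}_J(s)$ to approach $\gamma_J$ exponentially (modulo the shift $n_{\OmegaM}\zeta^{*}_{\pm}$), while $\partial_{\OmegaM}\RRR_1/\partial_{\Gam}\HH_{\CP}$ and the exponential $e^{\pm i\wt{\chi}^{*}_J(\sigma)}$ remain bounded. Making this cancellation uniform in $(J,\OmegaM,\iM)$, and justifying the appearance of $\zeta^{*}_{+}(J)$ in the exponent via matching of strong (un)stable foliations, is the technical crux of the argument.
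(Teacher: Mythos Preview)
Your proposal is correct and follows essentially the same approach as the paper. The paper organizes the inner-map computation via the homological equation $\PM_{\iM}\circ\GG_{\iM}=\GG_{\iM}\circ\FF^{\inner}_{\iM}$ and then projects onto the $(J,\OmegaM)$-components, which is equivalent to your observation that $\GG_{\iM}$ is a graph over $(J,\OmegaM)$; it also notes explicitly (via $\pi_J\PM_0\equiv J$) that the correction $\GG_1$ to the base point does not contribute to $A_1$, a point you use implicitly. For the scattering map the paper proceeds exactly as you outline: compute $J^0-J^{\pm}$ by integrating $dJ/ds$ along the homoclinic versus the inner orbit, then re-express everything in terms of $(J^{-},\Omega_{\rM}^{-})$ using $\Omega_{\rM}^{0}=\Omega_{\rM}^{-}+n_{\OmegaM}\zeta^{*}_{+}(J^{-})+\OO(\iM)$ and $\Omega_{\rM}^{+}=\Omega_{\rM}^{-}+2n_{\OmegaM}\zeta^{*}_{+}(J^{-})+\OO(\iM)$, which is precisely the shift bookkeeping you describe.
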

\begin{proof}
First, we focus on the inner map. Its regularity is a consequence of the regularity of the original flow and the regularity of the invariant cylinder (which is a consequence of Fenichel Theory).
Notice that, since it is the dynamics of the Poincar\'e map $\PM_{\iM}$ (defined in~\eqref{def:PoincareMapMoon}) restricted to $\widetilde{\Lambda}_{\iM,\delta}$, it satisfies the following homological equation
\begin{align}\label{proof:homologicalEquationsInnerOuterMap}
    {\PM}_{\iM} \circ \GG_{\iM} = \GG_{\iM} \circ \FF^{\inner}_{\iM}, 
\end{align}
where $\GG_{\iM}$ has been introduced in Theorem~\ref{theorem:invariantObjectsPerturbed}. 
Notice that $\PM_{\iM}$ is defined by
\begin{align*}
\PM_{\iM}(\eta,\Gam,J,\xi,0,\OmegaM) 
=
\psi^{\iM}\{2\pi,(\eta,\Gam,J,\xi,0,\OmegaM)\}, 
\end{align*}
where $\psi^{\iM}$ is the flow associated to the vector field~\eqref{eq:ODEreparametrizedMoon}. 
Therefore, by Lemma~\ref{lemma:harmonicsFlowInclined}, one can consider the following expansion 
\begin{align*}
\PM_{\iM} = {\PM}_0 
+ \iM \PM_1 
+ \iM^2 \PM_2 
+ \OO(\iM^3), 
\end{align*}
where, 
\begin{align*}
  \PM_0(\eta,\Gam,J,\xi,0,\OmegaM) 
  &=  \fCPE \{2\pi,(\eta,\Gam,J,\xi,0,\OmegaM)\},
  \\
  \PM_k(\eta,\Gam,J,\xi,0,\OmegaM) 
  &= \fiM_k \{2\pi,(\eta,\Gam,J,\xi,0,\OmegaM)\}, 
  \quad 
k=1,2.
\end{align*}
Let us consider the following expansions of the inner map
\begin{align*}
    \FF^{\inner}_{\iM} &= 
    {\FF}_0^{\inner} +
    \iM \FF^{\inner}_1 +
    \iM^2 \FF^{\inner}_2 + \OO(\iM^3).
\end{align*}
By \eqref{proof:homologicalEquationsInnerOuterMap} and Lemma~\ref{lemma:harmonicParametrizations}, the map $\FF_0^{\inner}$ is as given in \eqref{def:innerMapCoplanar}.
Moreover, 
\begin{equation*}
    \PM_1 \circ \GG_0 + (D \PM_0 \circ \GG_0) \GG_1 
    =
    \GG_1 \circ \FF_0^{\inner} +
    (D \GG_0 \circ \FF_0^{\inner}) \FF_1^{\inner}.
\end{equation*}
and
\begin{equation}\label{proof:innerMapSecondTerm}
\begin{aligned}
    &\PM_2 \circ \GG_0 + 
    (D \PM_1 \circ \GG_0) \GG_1 +
    \frac12 (D^2 \PM_0 \circ \GG_0) \GG_1^{\otimes 2} +
    (D \PM_0 \circ \GG_0) \GG_2
    \\
    &=
    \GG_2 \circ \FF_0^{\inner} +
    (D \GG_1 \circ \FF_0^{\inner}) \FF_1^{\inner} +
    \frac12 (D^2 \GG_0 \circ \FF_0^{\inner}) + 
    (D \GG_0 \circ \FF_0^{\inner}) \FF_2^{\inner}.
\end{aligned}
\end{equation}
Then, by Corollary~\ref{corollary:NHIMiM0parametrized} and Lemma~\ref{lemma:harmonicParametrizations},
\begin{equation} \label{proof:innerMapFirstTerm}
 \pi_{\diamond}\{\FF_1^{\inner} \} = \pi_{\diamond} \{ 
 \PM_1 \circ \GG_0 + (D \PM_0 \circ \GG_0) \GG_1\} \},
 \quad
 \diamond = J, \OmegaM,
\end{equation}
where $\pi_J$ and $\pi_{\OmegaM}$ denote the projections onto the coordinates $J$ and $\OmegaM$, respectively.
Since $\NNN_{\OmegaM}(\GG_1), \NNN_{\OmegaM}(\PM_1) = \{\pm 1\}$ one has that 
$
\NNN_{\OmegaM}(\FF_1^{\inner})=\{\pm 1\}.
$
Moreover, using analogous arguments and equation~\eqref{proof:innerMapSecondTerm}, one can obtain that
$
\NNN_{\OmegaM}(\FF_2^{\inner})=\{0, \pm 2\}.
$

Now it only remains to compute the formula for $A_1(J,\OmegaM)$. Indeed, by~\eqref{proof:innerMapFirstTerm} and taking into account that $\partial_J \PM_0 = (0,0,1,0,0,0)$, we have that
\begin{align*}
    A_1(J,\OmegaM) 
    = 
    \pi_{J} \{ \FF_1^{\inner}(J,\OmegaM) \}
    =
    \pi_J \{ \PM_1 \circ \GG_0\}
    = 
    \pi_J \, {\psi_1 \{ 2\pi, \GG_0(J,\OmegaM)\}}.
\end{align*}
Since $\psi^{\iM}$ is the flow associated to the vector field~\eqref{eq:ODEreparametrizedMoon}, we can apply the fundamental theorem of calculus and  the expression of $\RRR$ in \eqref{eq:expansionH1Poincare} to obtain 
\begin{align*}
\pi_J \, \psi^{\iM}\{ 2\pi, \GG_0(J,\OmegaM)\}
&= 
- \al^3 \iM \int_0^{2\pi} 
\frac{\partial_{\OmegaM} \RRR(\psi^{\iM} \{  s
, \GG_0(J,\OmegaM)\};\iM)}{\partial_{\Gam} \HH(\psi^{\iM} \{ 
s
, \GG_0(J,\OmegaM)\};\iM)} ds
\\
&=
- \al^3 \iM \int_0^{2\pi} 
\frac{\partial_{\OmegaM} \RRR_1(\fCPE\{ s, \GG_0(J,\OmegaM)\})}{\partial_{\Gam} \HH_{\CP}(\fCP \{ s, \GG_0(J,\OmegaM)\})} ds + \OO(\iM^2).
\end{align*}
Then, by the expression of $\RRR_1$ in \eqref{def:RRR1appendix} and of functions~\eqref{def:auxiliarFunctionsPerturbedMaps}, one has that
\begin{align*}
    A_1(J,\OmegaM) 
    =& \,
    - i \al^3 \int_0^{2\pi} 
    \frac{\RRR_1^+(\gamma_J(s)) 
    e^{i(\OmegaM+\wt\gamma_J(s))}}{\partial_{\Gam} \HH_{\CP}(\gam_J(s))} ds
    \\
    &+ i \al^3 \int_0^{2\pi} 
    \frac{\RRR_1^-(\gamma_J(s)) 
    e^{-i(\OmegaM+\wt\gamma_J(s))}}{\partial_{\Gam} \HH_{\CP}(\gam_J(s))} ds.
\end{align*}


Next, we compute the scattering maps for a fixed $*=\{\prim,\secn\}$ (see Definition~\ref{def:scattering}).  Its regularity is proven in \cite{delshams2008scattering} (for regular vector fields with regular normally hyperbolic invariant manifolds).

%
Let us consider  points 
$\GG_{\iM}(J^+,\Omega_{\rM}^+), \GG_{\iM}(J^-,\Omega_{\rM}^-) \in\wt{\Lambda}_{\iM,\de}$ and $\CCC_{\iM}^*(J^0,\Omega_{\rM}^0) \in \Xi_{\iM,\de}^*$ (see Theorem~\ref{theorem:invariantObjectsPerturbed}) such that
the unstable fiber of $\GG_{\iM}(J^-,\Omega_{\rM}^-)$ intersects the stable fiber of $\GG_{\iM}(J^+,\Omega_{\rM}^+)$ at the point $\CCC_{\iM}^*(J^0,\Omega_{\rM}^0)$ in the  homoclinic channel $\Xi_{\iM,\delta}^*$.  
This implies that  
\begin{equation}\label{eq:scattering_prop}
\lim_{s\to \pm \infty} 
\big|
\psi^{\iM}  \{ s, \GG_{\iM}(J^{\pm},\Omega_{\rM}^\pm)\} -
\psi^{\iM}  \{ s, \CCC_{\iM}^*(J^0, \Omega_{\rM}^0)\} \big| = 0,
\end{equation}
where $\psi^{\iM}$ is the flow associated to the vector field~\eqref{eq:ODEreparametrizedMoon}.
In other words, the scattering map satisfies that $(J^+,\Omega_{\rM}^+) = \FF^{\outer,*}_{\iM}(J^-, \Omega_{\rM}^-)$.

The analysis of the harmonic structure can be done as for the inner map. We show now how to  compute the first order of the $J$-component. 
Recall that $\pi_J \GG_{\iM}(J,\OmegaM)= \pi_J \CCC_{\iM}^*(J,\OmegaM)=J$.
By the fundamental theorem of calculus and using~\eqref{eq:scattering_prop}, one has that
\begin{align*}
     J^0-J^{\pm}  = 
    \lim_{s \to {\pm}\infty} 
    \Bigg[& \int_s^0 \pi_J \partial_s
    \psi^{\iM}\{\sigma, \CCC_{\iM}^*(J^0,\Omega_{\rM}^0)\} d\sigma
    \\
    &- \int_s^0 \pi_J \partial_s
    \psi^{\iM} \{\sigma, \GG_{\iM}(J^{\pm},\Omega_{\rM}^{\pm}  )\}
    d\sigma \Bigg].
\end{align*}
Then, by~\eqref{eq:ODEreparametrizedMoon},
\begin{align*}
    J^0-J^{\pm} = \alpha^3 \iM \lim_{s \to \pm \infty} \Bigg(&
    \int_0^{s} 
\frac{\partial_{\OmegaM} \RRR(\psi^{\iM} \{ \sigma, \CCC_{\iM}^*(J^0,\Omega_{\rM}^0\};\iM)}{\partial_{\Gam} \HH(\psi^{\iM} \{ \sigma, \CCC_{\iM}^*(J^0,\Omega_{\rM}^0)\};\iM)} d\sigma
\\
&- \int_0^{s} 
\frac{\partial_{\OmegaM} \RRR(\psi^{\iM} \{\sigma, \GG_{\iM}(J^{\pm},\Omega_{\rM}^{\pm})\};\iM)}{\partial_{\Gam} \HH(\psi^{\iM} \{\sigma, \GG_{\iM}(J^{\pm},\Omega_{\rM}^{\pm})\};\iM)} d\sigma
\Bigg)
\end{align*}
and, by Lemmas~\ref{lemma:harmonicsFlowInclined}, \ref{lemma:harmonicParametrizations} and the expression of $\RRR$ in \eqref{eq:expansionH1Poincare}, we obtain 
\begin{align*}
    J^0 - J^{\pm} = \alpha^3 \iM \lim_{s \to \pm \infty} \Bigg(&
    \int_0^{s} 
\frac{\partial_{\OmegaM} \RRR_1(\fCPE \{ \sigma, \CCC_{0}^*(J^0,\Omega_{\rM}^0)\})}{\partial_{\Gam} \HH_{\CP}(\fCPE \{ \sigma, \CCC_{0}^*(J^0,\Omega_{\rM}^0)\})} d\sigma
\\
&- \int_0^{s} 
\frac{\partial_{\OmegaM} \RRR_1(\fCPE \{\sigma, \GG_{0}(J^{\pm},\Omega_{\rM}^{\pm})\})}{\partial_{\Gam} \HH_{\CP}(\fCPE \{\sigma, \GG_{0}(J^{\pm},\Omega_{\rM}^{\pm})\})} d\sigma
\Bigg) + \OO(\iM^2).
\end{align*}
Finally, by the expression of $\RRR_1$ in \eqref{def:RRR1appendix}, of $\fCPE$ in \eqref{def:flowParametrizedCoplanar} and of the functions~\eqref{def:auxiliarFunctionsPerturbedMaps}
\begin{align*}
    J^0&-J^{\pm} = i\alpha^3 \iM \lim_{s \to \pm\infty} 
    \\
    &\Bigg( \int_0^{s} 
    \frac{\RRR_1^+(\chi^*_{J^0}(\sigma)) e^{i\Omega_{\rM}^0+i\wt{\chi}_{J^0}^*(\sigma)} - 
    \RRR_1^-(\chi^*_{J^0}(\sigma)) e^{-i\Omega_{\rM}^0-i\wt{\chi}_{J^0}^*(\sigma)}}{
\partial_{\Gam} \HH_{\CP}(\chi^*_{J^0}(\sigma))} d\sigma
\\
&- \int_0^{s} 
\frac{\RRR_1^+(\gamma_{J^{\pm}}(\sigma)) e^{i\Omega_{\rM}^{\pm} + i\wt{\gamma}_{J^{\pm}}(\sigma)} -
\RRR_1^-(\gamma_{J^{\pm}}(\sigma)) e^{-i\Omega_{\rM}^{\pm} - i\wt{\gamma}_{J^{\pm}}(\sigma)}
}{\partial_{\Gam} \HH_{\CP}(\gamma_{J^{\pm}}(\sigma))} d\sigma
\Bigg) + \OO(\iM^2).
\end{align*} 
%
Notice that, by Lemma~\ref{lemma:integralsScattering}, \eqref{proof:defzetaplus} and \eqref{proof:defzetaminus}, one has that
\begin{align*}
\Omega_{\rM}^{\pm} &=  \Omega_{\rM}^0 + n_{\OmegaM} \zeta^*_{\pm}(J^0) + \OO(\iM),
\end{align*}
with $\zeta_+^* = -\zeta_-^*$.
Therefore, since $J^{\pm} = J^0 + \OO(\iM)$, one has that
\begin{align*}
\Omega_{\rM}^0 &=  \Omega_{\rM}^- + n_{\OmegaM} \zeta^*_{+}(J^-) + \OO(\iM),
\\
\Omega_{\rM}^+ &= \Omega_{\rM}^- + 
2 n_{\OmegaM} \zeta^{*}_+(J^-)+ \OO(\iM)
\end{align*} 
and, as a consequence,
\begin{align*}
    J^+ =& \, J^- + (J^0-J^-) - (J^0 - J^+)
    \\
    =& \, J^- + \iM \left( B_1^{*,+}(J^-) e^{i\Omega_{\rM}^-}
    + B_1^{*,-}(J^-) e^{-i\Omega_{\rM}^-} \right)
    + \OO(\iM^2),
\end{align*}
with $B_1^{*,\pm}$ are the functions defined in the statement of the result.  
\end{proof}



\subsection{Existence of diffusing orbits}

Once we have computed the first orders in $\iM$ of both the inner and the scattering maps (see Theorem~\ref{theorem:PerturbedMaps}), the next step is to construct a ``drifting pseudo-orbit''.  By a pseudo-orbit, we mean a sequence of points in the cylinder obtained by applying successively iterations of the inner map $\FF_{\iM}^\inner$ and the scattering maps $\FF_{\iM}^{\outer,\prim}$ and  $\FF_{\iM}^{\outer,\secn}$. By drifting, we mean that we look for a pseudo-orbit such that its initial condition is close to the bottom of the cylinder,  i.e $J\sim J_{\min}$, and that eventually it hits a neighborhood of its top, i.e., $J\sim J_{\max}$.

To construct this pseudo-orbit we rely on the following ansatz, which we verify numerically. 
See Figures~\ref{fig:A1sincos_B1sincos}, \ref{fig:A1sincos_B1sincos_prn_ntt} and~\ref{fig:ham_norm_prn}.

\begin{figure}[t]
\begin{center}
		\includegraphics[width=10cm]{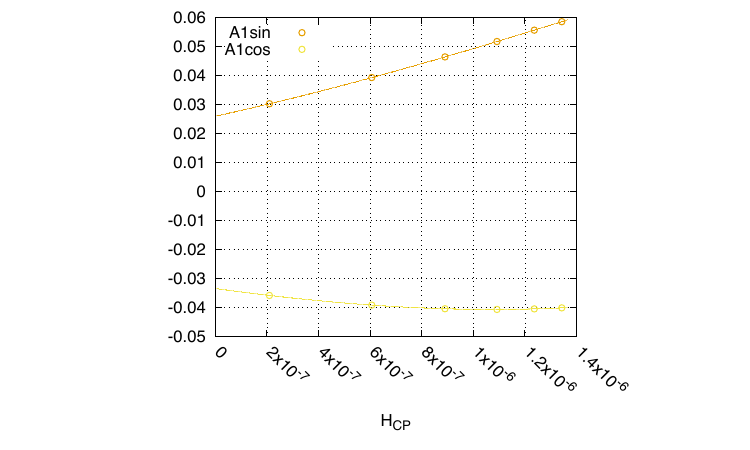}
  \end{center}
 \caption{The components of $A_1^{+}$ as a function of ${\HH}_{\CP}$ (non-dimensional units).}
 \label{fig:A1sincos_B1sincos}
\end{figure}

\begin{figure}[ht]
	\begin{minipage}{0.45\linewidth}
		\includegraphics[width=8.2cm]{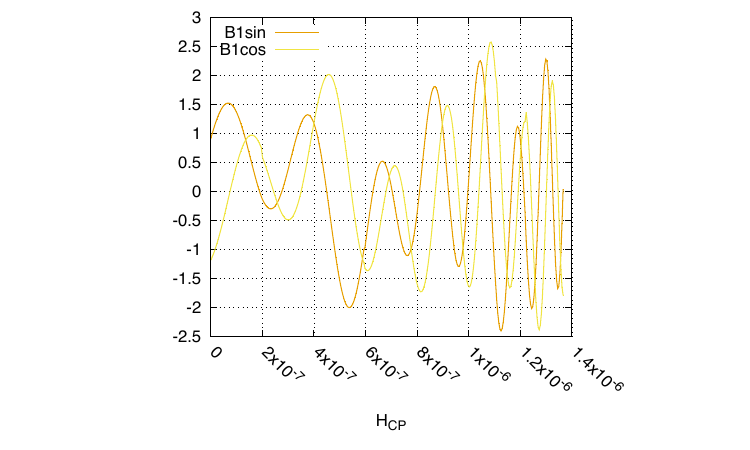}
	\end{minipage}
 	\begin{minipage}{0.45\linewidth}
		\includegraphics[width=8.2cm]{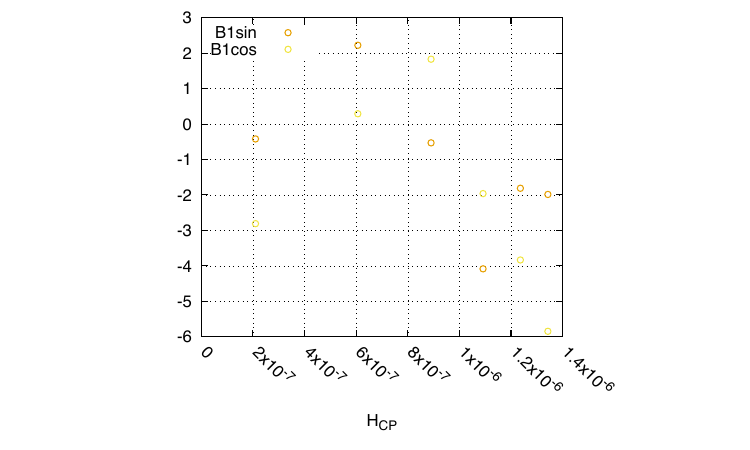}
	\end{minipage}
 \caption{
 For $*\in \{\prim,\secn\}$, we plot the real functions $B_{1,\cos}^*=B_1^+ + B_1^-$ and $B_{1,\sin}^*=i(B_1^+ - B_1^-)$  as a function of ${\HH}_{\CP}$ (non-dimensional units). Left, the $*=\prim$ case. Right, the $*=\secn$ case for the non-transverse values (see Table~\ref{tab:reldiff}).}
 \label{fig:A1sincos_B1sincos_prn_ntt}
\end{figure}

\begin{ansatz} \label{ansatz:straightening}
For any $\de>0$ and $*\in \{\prim, \secn\}$, the following functions of $J$
\begin{align*}
    f^*_{\pm}(J) &= \left(e^{\pm i n_{\OmegaM} \TTT_0(J)}-1\right) B_1^{*,\pm}(J) - \left(e^{\pm i n_{\OmegaM} \zeta^*(J)}-1\right) A_1^{\pm}(J), 
\end{align*}
do not vanish in the domain $\tD_{\delta}^*$.
\end{ansatz}

The following theorem ensures the existence of a pseudo-orbit. Its proof is deferred to Section \ref{sec:pseudoorbit}.

\begin{figure}[H]
	\begin{center}
		\includegraphics[width=9.cm]{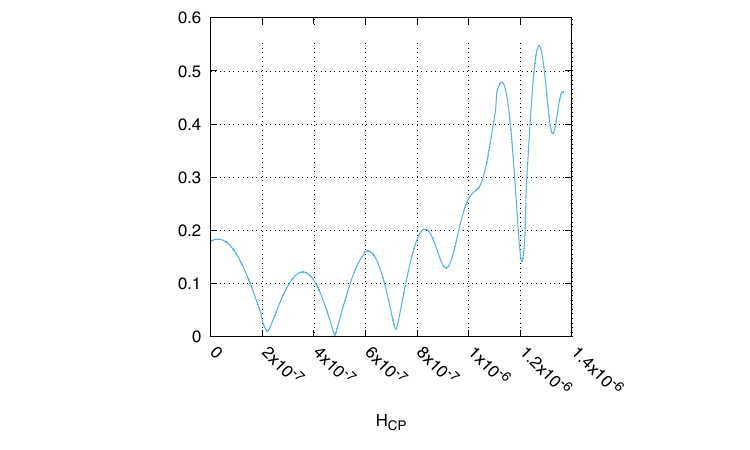}  		\hspace{-3.42cm}\includegraphics[width=9.cm]{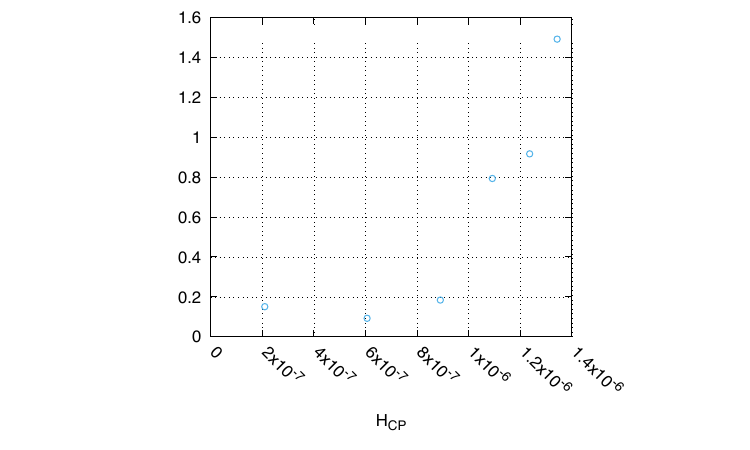} 
	\end{center}
	\caption{As a function of ${\HH}_{\CP}\equiv - J$, we show the norm of $f^{\prim}_{+}(J)$ in Ansatz~\ref{ansatz:straightening} on the left. The minimum value computed is 0.0014295 and corresponds to ${\HH}_{\CP}= 4.81143\times 10^{-7}$. On the right, the same norm computed for the non-transverse cases (see Table~\ref{tab:reldiff}).  The corresponding integrals have been computed by means of the function {\tt qags} of the  {\tt quadpack} Fortran package.}
	\label{fig:ham_norm_prn}
\end{figure}

\begin{theorem}\label{thm:pseudoorbit}
Assume Ans\"atze \ref{ansatz:periodic}, \ref{ansatz:phaseshiftouter}, \ref{ansatz:straightening}. Fix $\nu>0$ and $\de>0$. Then, if $\iM>0$ is small enough, there exist $z_0\in\widetilde{\Lambda}_{\iM,\de}$,  $N\in \NN$ and $l_m\in \{\prim,\secn\}$, $j_m, k_m\in\NN$ for $m=1\dots N$ such that the sequence
\[
z_{m}=\left(\FF_{\iM}^\inner\right)^{j_m}\circ\FF_{\iM}^{\outer,l_m}\circ \left(\FF_{\iM}^\inner\right)^{k_m}(z_{m-1})
\]
satisfies $z_m\in \wt\Lambda_{\iM,\de}$ for all $m=1\ldots N$ and the initial and final points $z_0$ and  $z_N$ satisfy
\[
|\pi_J z_0-J_{\min}|\leq \nu \quad \text{and}\quad |\pi_J z_N-J_{\max}|\leq \nu,
\]
where $\pi_J$ denotes the projection onto 
the $J$ component.

Moreover, in the regions 
\[
\JJ_{\min}=(J_{\min},J_{\min}+\nu)\times\TT,\qquad  \JJ_{\max}=(J_{\max}-\nu,J_{\max})\times\TT,
\]
the inner map $\FF_{\iM}^\inner$ has invariant tori which are a graph over $h$  and whose dynamics are conjugated to rigid quasi-periodic rotation.
\end{theorem}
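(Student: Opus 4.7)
The plan is two-fold: establish KAM tori for the inner map on the boundary strips $\JJ_{\min},\JJ_{\max}$ (the second assertion of the theorem) and, separately, construct the drifting pseudo-orbit by a step-by-step argument whose key input is the non-degeneracy in Ansatz~\ref{ansatz:straightening}.

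For the KAM part, by Theorem~\ref{theorem:invariantObjectsPerturbed} the pull-back of the canonical form to $\wt\Lambda_{\iM,\de}$ is $a_\iM\,d\OmegaM\wedge dJ$ with $a_\iM=1+\OO(\iM)$; a near-identity symplectic change of chart absorbs $a_\iM$. In the resulting canonical coordinates, Theorem~\ref{theorem:PerturbedMaps} exhibits $\FF^\inner_\iM$ as an $\OO(\iM)$ exact-symplectic perturbation of the integrable twist map $(J,\OmegaM)\mapsto(J,\OmegaM+n_{\OmegaM}\TTT_0(J))$, whose twist condition $\TTT_0'(J)\neq 0$ on $[J_{\min},J_{\max}]$ is supplied by Lemma~\ref{corollary:innerMapCoplanar}. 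Moser's twist theorem then produces, for $\iM$ small enough depending on $\nu$, Cantor-like families of invariant curves that are graphs over $\OmegaM$ with dynamics conjugate to rigid Diophantine rotations; in particular such curves populate both $\JJ_{\min}$ and $\JJ_{\max}$, giving the last claim.

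The bulk construction rests on the following one-step drift lemma: there exist $c_0>0$ and $K_0\in\NN$, independent of $\iM$, such that for every $z=(J,\OmegaM)\in\wt\Lambda_{\iM,\de}$ with $J\in\tD_\de$ one can choose $l\in\{\prim,\secn\}$ with $J\in\tD_\de^l$ (possible because $\tD_\de^{\prim}\cup\tD_\de^{\secn}\supset\tD_\de$ by Corollary~\ref{corollary:NHIMiM0parametrized}) and integers $0\leq k,j\leq K_0$ such that $z'=(\FF^\inner_\iM)^j\circ\FF^{\outer,l}_\iM\circ(\FF^\inner_\iM)^k(z)$ lies in $\wt\Lambda_{\iM,\de}$ and satisfies $\pi_J z'\geq\pi_J z+c_0\iM$. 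Concatenating $\lceil (J_{\max}-J_{\min})/(c_0\iM)\rceil$ copies of the lemma then produces the sequence $z_0,\dots,z_N$ of the theorem. To prove the lemma, I expand $\pi_J z'$ to first order in $\iM$ using Theorem~\ref{theorem:PerturbedMaps}: writing $\omega_0(J)=n_{\OmegaM}\TTT_0(J)$ and $\zeta_M^l(J)=n_{\OmegaM}\zeta^l(J)$, the choices $(k,j)=(1,0)$ and $(k,j)=(0,1)$ give first-order $J$-shifts whose $e^{i\OmegaM}$-Fourier coefficients are respectively $A_1^+(J)+B_1^{l,+}(J)\,e^{i\omega_0(J)}$ and $B_1^{l,+}(J)+A_1^+(J)\,e^{i\zeta_M^l(J)}$. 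A short elimination shows that their simultaneous vanishing forces $(e^{i\omega_0(J)}-1)B_1^{l,+}(J)-(e^{i\zeta_M^l(J)}-1)A_1^+(J)=f_+^l(J)=0$, contradicting Ansatz~\ref{ansatz:straightening}. Hence at least one of the two choices has a non-constant $\OO(\iM)$-shift as a function of $\OmegaM$, of amplitude bounded below by $c_0>0$ uniformly on $\tD_\de^l$; enlarging $k$ within $K_0$ rotates $\OmegaM$ to the maximum of this cosine, and the trailing $j$ iterations reposition the output angle for the next step of the chain.

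The main obstacle is the resonance $J=\Jres$ with $\omega_0(\Jres)=4\pi$ from Lemma~\ref{corollary:innerMapCoplanar}: there $e^{i\omega_0(\Jres)}=1$ and inner iterates act trivially on $\OmegaM$, so the rotation trick above breaks down. This is precisely where Ansatz~\ref{ansatz:phaseshiftouter} enters: $\zeta^\prim(\Jres)\notin\pi\ZZ$ ensures that even at $\Jres$ the Fourier coefficient of the $(0,1)$-configuration for $l=\prim$ reduces to $A_1^+(\Jres)\,(e^{i\zeta_M^\prim(\Jres)}-1)$, which is nonzero since $A_1^+(\Jres)\neq 0$ (otherwise $f_+^\prim(\Jres)=0$), while $\partial_J\zeta^\prim(\Jres)\neq 0$ gives a genuine twist of the primary scattering map that lets one sweep $\OmegaM$ by iterating $\FF^{\outer,\prim}_\iM$ itself and cross the $\OO(\iM)$-neighbourhood of $\Jres$ in finitely many steps.
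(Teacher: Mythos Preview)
Your one-step drift lemma is where the argument breaks. You establish that at least one of $c_{1,0}=A_1^{+}+B_1^{l,+}e^{i\omega_0}$ or $c_{0,1}=B_1^{l,+}+A_1^{+}e^{i\zeta_M^l}$ is nonzero, which is correct and does follow from Ansatz~\ref{ansatz:straightening}. But ``enlarging $k$ within $K_0$ rotates $\OmegaM$ to the maximum of this cosine'' conflates two different things: prepending $m$ inner iterates does advance the angle by $m\omega_0$, but it simultaneously adds an inner-only contribution $A_1^{+}\frac{e^{im\omega_0}-1}{e^{i\omega_0}-1}$ to the Fourier coefficient of the $J$-shift. This extra term has magnitude up to $2|A_1^{+}|/|e^{i\omega_0}-1|$, which blows up as $J\to\Jres$ and can overwhelm the $c_0$ you secured; in particular there are directions $\OmegaM_0$ for which every $(k,0)$-combination gives a nonpositive first-order shift (the circle $\{c_{k,0}\}$ need not enclose the origin). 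Your stated use of the trailing $j$ iterates is only to ``reposition the output angle'', so they do not repair this. A correct direct argument would have to vary $k$ and $j$ jointly and exploit that the coefficient $c_{k,j}=(\alpha+\beta+\alpha v w^{j})w^{k}-\alpha$ (with $\alpha=A_1^{+}/(w-1)$, $\beta=B_1^{l,+}$, $w=e^{i\omega_0}$, $v=e^{i\zeta_M^l}$) can be made, for suitable $j$, to trace a circle of radius $|\alpha|+|f_+^{l}|/|w-1|>|\alpha|$; this is not what you wrote. There is also a miscalculation at $\Jres$: the $(0,1)$-coefficient is $B_1^{\prim,+}(\Jres)+A_1^{+}(\Jres)e^{i\zeta_M^{\prim}(\Jres)}$, not $A_1^{+}(\Jres)(e^{i\zeta_M^{\prim}(\Jres)}-1)$; the latter is the difference $c_{0,1}-c_{1,0}$ at $\Jres$.

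The paper does not attempt a uniform drift estimate. Instead it first performs two steps of averaging (Lemmas~\ref{lemma:averaginginner} and~\ref{lemma:averagingouter}): away from $\Jres$ the inner map becomes $\OO(\iM^{3})$-close to integrable, so KAM (Theorem~\ref{thm:KAM}) produces inner-invariant tori with $\OO(\iM^{3/2})$ gaps, and in the averaged coordinates the scattering map has first-order $J$-coefficient $\widetilde B_1^{*,+}=f_+^{*}/(e^{i\omega_0}-1)\neq 0$, so the image of each torus intersects the next transversally. Near $\Jres$ the roles are swapped, using Ansatz~\ref{ansatz:phaseshiftouter} for the twist of the primary scattering map. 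The pseudo-orbit is then read off from this transition chain (Lemma~\ref{lemma:transitionchain}) using quasi-periodicity on the tori; no uniform bound on $k_m,j_m$ is claimed or needed. The point of the averaging is exactly to kill the inner-map $J$-drift that spoils your argument, so that ``rotating $\OmegaM$'' and ``gaining $J$'' are genuinely decoupled. Your KAM discussion for the boundary strips $\JJ_{\min},\JJ_{\max}$ is fine.
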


Once we have a pseudo-orbit, the final step to prove Theorem \ref{thm:main} is to construct a true orbit of the Poincar\'e map $\PM_{\iM}$ which shadows the pseudo-orbit, that is, which visits small neighborhoods of the points $z_m$ of the pseudo-orbit given by Theorem \ref{thm:pseudoorbit}. This shadowing will rely on the following theorem by M. Gidea, R. de la Llave and T. M. Seara in \cite{GideaLS20}.

\begin{theorem}[Lemma 3.2 in \cite{GideaLS20}]\label{lemma:shadowingCPAM}
Assume that $f:\MM\to \MM$ is a $\mathcal{C}^r$ map with $r\geq 4$, $\Lambda\subset \MM$ is a normally hyperbolic invariant manifold, $\Gamma_j$, $j=1\ldots N$ are homoclinic channels and $\sigma^j:D_1^j\to D_2^j$ are the associated scattering maps, where $D_{1,2}^j\subset\Lambda$ are open sets. Assume that $\Lambda$ and $\Gamma_j$ are compact manifolds (possibly with boundary).

Then, for every $\nu>0$, there exist functions $n_i^*:\NN^i\to\NN$ 
and  functions $m_i^*:\NN^{2i+1}\times\NN^{i+1}\to\NN$ 
 such that for every pseudo-orbit $\{y_i\}_{y\geq 0}$ in $\Lambda$ of the form 
\[
y_{i+1}=f^{m_i}\circ\sigma^{\al_i}\circ f^{n_i}(y_i),
\]
with $n_i\geq n_i^*(\al_0,\ldots, \al_{i-1})$, $m_i\geq m_i^*(n_0,\ldots, n_i, m_0,\ldots. m_{i-1},\al_0,\ldots, \al_{i})$ and $\alpha_i \in \{1,\ldots, N\}$, there exists an orbit $\{z_i\}_{i\geq 0}$ of $f$ in $\MM$ such that, for all $i\geq 0$,
\[
z_{i+1}=f^{m_i+n_i}(z_i)\quad \text{and}\quad d(z_i,y_i)<\nu.
\]
\end{theorem}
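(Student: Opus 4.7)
The plan is to apply the method of correctly aligned windows (in the sense of Zgliczyński, adapted to normally hyperbolic invariant manifolds) together with the $\lambda$-lemma for NHIMs. The strategy is to build around each pseudo-orbit point $y_i$ a ``window'' $W_i\subset\MM$ of diameter less than $\nu$, adapted to the splitting $T_{y_i}\MM=T_{y_i}\Lambda\oplus E^{\sta}_{y_i}\oplus E^{\unst}_{y_i}$, with ``exit'' faces along $E^{\unst}_{y_i}$ and ``entry'' faces along $T_{y_i}\Lambda\oplus E^{\sta}_{y_i}$; to insert intermediate transition windows along true homoclinic excursions realizing each scattering transition; and to check that every pair of consecutive windows is correctly aligned under the corresponding iterate of $f$. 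Zgliczyński's covering relation then produces a true orbit $\{z_i\}$ that visits every $W_i$, which gives $d(z_i,y_i)<\nu$.

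The technical core is the alignment step. I would factor the prescribed iteration as
\begin{equation*}
W_i\xrightarrow{f^{n_i}}\widehat{W}_i^{-}\longrightarrow\widehat{W}_i^{+}\xrightarrow{f^{m_i}}W_{i+1},
\end{equation*}
where $\widehat{W}_i^{\pm}$ are small transition windows placed near a homoclinic point $Q_i\in\Gamma_{\alpha_i}$ satisfying $\pi^{\unst}(Q_i)=f^{n_i}(y_i)$ and $\pi^{\sta}(Q_i)=\sigma^{\alpha_i}(f^{n_i}(y_i))$, whose existence is guaranteed by Definition~\ref{def:scattering}. Choosing the center of $W_i$ slightly off $\Lambda$ along an unstable fiber, the $\lambda$-lemma for NHIMs ensures that, for $n_i$ large, $f^{n_i}(W_i)$ stretches across $\widehat{W}_i^{-}$ along the unstable direction and thus defines a covering relation $W_i\Rightarrow\widehat{W}_i^{-}$. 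The alignment $\widehat{W}_i^{+}\Rightarrow W_{i+1}$ is symmetric: iterates $f^{k}(\widehat{W}_i^{+})$ contract at exponential rate $\nu^{k}$ onto the forward orbit of $\sigma^{\alpha_i}(f^{n_i}(y_i))$ along $W^{\sta}(\Lambda)$, so for $m_i$ sufficiently large the image is contained in $W_{i+1}$. The thresholds $n_i^*$ and $m_i^*$ are determined by the hyperbolic rates $\nu,\vartheta$ of Definition~\ref{def:nhim} together with the sizes of the windows already constructed, which in turn depend on $\alpha_0,\ldots,\alpha_{i-1}$ and $n_0,m_0,\ldots,n_{i-1},m_{i-1}$; this yields exactly the declared dependence.

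The main obstacle is to obtain a uniform quantitative version of the $\lambda$-lemma across all homoclinic channels: the sizes of $\widehat{W}_i^{\pm}$ have to be adapted to the angle between $W^{\unst}(\Lambda)$ and $W^{\sta}(\Lambda)$ along $\Gamma_{\alpha_i}$, and the required waiting times grow like the logarithm of the inverse of this angle and of the window diameters. The hypothesis $r\geq 4$, together with the spectral gap \eqref{eq_largespectralgap}, ensures sufficient smoothness of the strong stable and unstable foliations so that the holonomies $\pi^{\sta},\pi^{\unst}$ have bounded $\mathcal{C}^{1}$-distortion, which is what closes the alignment estimates. Compactness of $\Lambda$ and of each $\Gamma_j$ turns pointwise bounds into uniform ones, after which the covering-relation chain $W_0\Rightarrow\widehat{W}_0^{-}\Rightarrow\widehat{W}_0^{+}\Rightarrow W_1\Rightarrow\cdots$ produces the desired shadowing orbit via Zgliczyński's theorem on infinite chains of correctly aligned windows.
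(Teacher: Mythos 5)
The paper does not prove this statement; it is imported verbatim as Lemma 3.2 of \cite{GideaLS20} and used as a black box in the proof of Lemma~\ref{lemma:shadowingfinal}. The proof in that reference is indeed based on the method of correctly aligned windows together with a qualitative (rate-free) $\lambda$-lemma for normally hyperbolic invariant manifolds, so your blind reconstruction recovers the correct strategy and the right factorization $W_i\Rightarrow\widehat{W}_i^{-}\Rightarrow\widehat{W}_i^{+}\Rightarrow W_{i+1}$ of the passage through each homoclinic channel. Two small points of precision. First, it is unnecessary to displace the center of $W_i$ off $\Lambda$ along the unstable fiber: the window is centered at $y_i\in\Lambda$ and has small (but nonzero) thickness in the unstable direction; the shadowing orbit produced by the covering chain lands somewhere inside $W_i$, and it is that orbit, not the window, which typically sits slightly off $\Lambda$. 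Second, the thresholds $n_i^*,m_i^*$ in the statement are not claimed to be uniform in $i$; they are allowed to depend on the earlier choices $n_0,\ldots,n_{i-1},m_0,\ldots,m_{i-1},\alpha_0,\ldots,\alpha_i$ precisely because, as you note, the window sizes shrink along the chain and the waiting times must grow accordingly — compactness of $\Lambda$ and of each $\Gamma_j$ is only used to ensure the $\lambda$-lemma and the transversality of the homoclinic intersections hold with locally uniform constants, not to extract an $i$-independent threshold. With these remarks, your outline captures the argument of the cited lemma.
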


Theorem \ref{thm:main} is a direct consequence of the following lemma.

\begin{lemma}\label{lemma:shadowingfinal}
Assume Ans\"atze \ref{ansatz:periodic}, \ref{ansatz:phaseshiftouter}, \ref{ansatz:straightening} and fix $\nu>0$ small. Then, for $\iM>0$ small enough, there exist a point $p_0\in\Sigma$ (see \eqref{def:PoincareMapMoon}) and $k_1\ldots k_N$ such that the iterates 
\[
p_j=\PM_{\iM}^{k_j}(p_0)
\]
satisfy that 
\[
\left|p_{j}-\GG_{\iM}(z_j)\right|\leq 2\nu,\qquad j=1\ldots N,
\]
where  $\{z_j\}_{i=0}^N$ is the pseudo-orbit obtained in Theorem~\ref{thm:pseudoorbit} and $\GG_{\iM}$ is the parameterization of the cyilinder given in Lemma \ref{theorem:invariantObjectsPerturbed}.

Moreover,
\[
|\pi_J p_0-J_{\min}|\leq 2\nu\qquad\text{and}\qquad |\pi_J p_{N}-J_{\max}|\leq 2\nu.
\]
\end{lemma}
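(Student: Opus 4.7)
The plan is to invoke the shadowing theorem of Gidea--de la Llave--Seara (Theorem~\ref{lemma:shadowingCPAM}) with $f=\PM_{\iM}$ on $\MM=\Sigma$, normally hyperbolic invariant manifold $\Lambda=\wt\Lambda_{\iM,\de}$, and the two homoclinic channels $\Gamma_1=\Xi^{\prim}_{\iM,\de}$, $\Gamma_2=\Xi^{\secn}_{\iM,\de}$ from Theorem~\ref{theorem:invariantObjectsPerturbed}, whose associated scattering maps are $\sigma^1=\FF^{\outer,\prim}_{\iM}$ and $\sigma^2=\FF^{\outer,\secn}_{\iM}$. Since the restriction of $\PM_{\iM}$ to $\wt\Lambda_{\iM,\de}$ is by construction the inner map $\FF^{\inner}_{\iM}$, the pseudo-orbit provided by Theorem~\ref{thm:pseudoorbit},
\[
z_m=(\FF^{\inner}_{\iM})^{j_m}\circ \FF^{\outer,l_m}_{\iM}\circ(\FF^{\inner}_{\iM})^{k_m}(z_{m-1}),\qquad l_m\in\{\prim,\secn\},
\]
already has the form $y_{i+1}=f^{m_i}\circ \sigma^{\alpha_i}\circ f^{n_i}(y_i)$ required by Theorem~\ref{lemma:shadowingCPAM}. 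First I would verify the standing hypotheses of that theorem: $\PM_{\iM}$ is $\CCC^r$ with $r\geq 4$ by Theorem~\ref{theorem:invariantObjectsPerturbed}, while both $\wt\Lambda_{\iM,\de}$ and $\Xi^{*}_{\iM,\de}$ are compact manifolds with boundary, being the images of $\tD_{\de}\times\TT$ and $\tD^{*}_{\de}\times\TT$ under the smooth parametrizations $\GG_{\iM}$ and $\CCC^{*}_{\iM}$.

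Next I would reconcile the waiting times. Theorem~\ref{lemma:shadowingCPAM} requires the inner iterates $k_m, j_m$ to exceed thresholds $n^{*}_m, m^{*}_m$ determined a posteriori from the choice of scattering labels and the previous waiting times, while Theorem~\ref{thm:pseudoorbit} only guarantees the existence of some admissible $k_m, j_m$. The transition-chain construction of Section~\ref{sec:pseudoorbit}, however, is based on connecting quasi-periodic rotational tori of $\FF^{\inner}_{\iM}$ via scattering jumps; by the twist property of the inner map (Lemma~\ref{corollary:innerMapCoplanar}), each orbit is dense in its torus, so the inner waiting times may be arbitrarily enlarged without destroying the heteroclinic connections. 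I would therefore first extract the thresholds from the shadowing theorem and then re-apply Theorem~\ref{thm:pseudoorbit} under the enlarged constraint $k_m\geq n^{*}_m$, $j_m\geq m^{*}_m$ to obtain a pseudo-orbit $\{z_m\}_{m=0}^{N}\subset \wt\Lambda_{\iM,\de}$ with $|\pi_J z_0-J_{\min}|\leq\nu$ and $|\pi_J z_N-J_{\max}|\leq\nu$. Theorem~\ref{lemma:shadowingCPAM} then produces an orbit $\{p_m\}_{m=0}^{N}$ of $\PM_{\iM}$ in $\Sigma$ of the form $p_m=\PM_{\iM}^{K_m}(p_0)$ with $K_m:=\sum_{l=1}^{m}(k_l+j_l)$, and $|p_m-\GG_{\iM}(z_m)|<\nu$ for every $m$, which is precisely the statement of the lemma after identifying the cumulative counts $K_m$ with the $k_j$ appearing in its statement.

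For the endpoint bound I would use that $\GG_{\iM}$ preserves the $J$-coordinate (Theorem~\ref{theorem:invariantObjectsPerturbed}), so that $\pi_J\GG_{\iM}(z_m)=\pi_J z_m$; the triangle inequality then gives
\[
|\pi_J p_0-J_{\min}|\leq |\pi_J p_0-\pi_J z_0|+|\pi_J z_0-J_{\min}|\leq 2\nu,
\]
and analogously $|\pi_J p_N-J_{\max}|\leq 2\nu$. The main obstacle is precisely the reconciliation of the two theorems just described: the shadowing result demands waiting times larger than thresholds determined only after the pseudo-orbit is fixed, while Theorem~\ref{thm:pseudoorbit} only asserts existence for some waiting times. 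Confirming that the generalized transition-chain argument of Section~\ref{sec:pseudoorbit} remains valid when $k_m, j_m$ are replaced by arbitrarily larger integers is the delicate step, and relies on the density of inner-map orbits in the rotational tori granted by Lemma~\ref{corollary:innerMapCoplanar} together with the openness of the transverse homoclinic intersections ensured by Ans\"atze~\ref{ansatz:phaseshiftouter} and~\ref{ansatz:straightening}.
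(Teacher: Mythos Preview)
Your proposal is correct and follows essentially the same route as the paper: verify the hypotheses of Theorem~\ref{lemma:shadowingCPAM} for $\PM_{\iM}$, $\wt\Lambda_{\iM,\de}$ and the two channels, then resolve the waiting-time mismatch before invoking shadowing. The two proofs differ only in how that mismatch is resolved. The paper first trims the cylinder to a domain $\Lambda$ bounded by two of the invariant KAM tori supplied by Theorem~\ref{thm:pseudoorbit}; this makes $\Lambda$ genuinely invariant (not merely locally), so the area-preserving inner map satisfies Poincar\'e recurrence, and one builds a \emph{new} pseudo-orbit arbitrarily close to the original one with waiting times as large as needed. You instead keep the original transition-chain construction of Section~\ref{sec:pseudoorbit} and use that each segment sits on a quasi-periodic torus of $\FF^{\inner}_{\iM}$, so density of orbits lets you enlarge $k_m,j_m$ at will while preserving the transverse intersections. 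Both arguments are valid; yours exploits the specific structure of the pseudo-orbit, while the paper's Poincar\'e-recurrence argument is agnostic to that structure but requires the extra step of trimming $\Lambda$ between invariant tori. Your triangle-inequality endpoint bound via $\pi_J\GG_{\iM}(z)=\pi_J z$ is exactly what is implicit in the paper.
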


\begin{proof}
To prove Lemma \ref{lemma:shadowingfinal} we have to apply Theorem \ref{lemma:shadowingCPAM}. To this end, we denote by $\Lambda$ a domain of the cylinder $\wt\Lambda_{\iM,\de}$ delimited by one of the ```top'' and ``bottom'' invariant tori given by Theorem~\ref{thm:pseudoorbit}. This makes $\Lambda$ compact and invariant. Note that  the projection onto $J$ of the  homoclinic channels $\Xi^*_{\iM,\de}$ obtained in Theorem \ref{theorem:invariantObjectsPerturbed}  covers the whole interval $[J_{\min}+\de, J_{\max}-\de]$ and that one can choose compact sets inside the homoclinic channels that still cover the same interval. 

Then, to apply Theorem \ref{lemma:shadowingCPAM}, it only remains to ensure that the amount of iterates of the inner maps 
is large enough to fit the hypotheses of the theorem. We follow ideas developed in \cite{GideaLS20}.

Since $\Lambda$ is compact and invariant by $\PM_{\iM}$ and $\FF_{\iM}^\inner$ is an area preserving map, we can use the Poincar\'e Recurrence Theorem to construct a pseudo-orbit which is arbitrarily close to that of Theorem \ref{thm:pseudoorbit} and satisfies the hypotheses of Theorem \ref{lemma:shadowingCPAM}. Indeed, the Poincar\'e Recurrence Theorem assures that there exists $\tilde{z}_0$ and $\tilde{k}_1$ big enough such that $\wt{Q}_0:=\big (\FF^{\inner}_{\iM} \big )^{\tilde{k}_1}(\tilde{z}_0)$ is as close as necessary to $Q_0:=\big (\FF^{\inner}_{\iM} \big )^{{k}_1}(z_0)$. Since $\FF^{\outer,i_m}_{\iM}$ is continuous, also $Q_1:=\FF^{\outer,i_m}_{\iM}(Q_0)$ and $\wt{Q}_1:=\FF^{\outer,i_m}_{\iM}(\wt{Q}_0)$ are as close as necessary. Then, applying again the Poincar\'e Recurrence Theorem, there exists $\tilde{j}_1$ such that $\tilde{z}_1=  \big (\FF^{\inner}_{\iM} \big )^{{j}_1}(\tilde{Q}_1)$ and $z_1= \big (\FF^{\inner}_{\iM}\big )^{j_1}(Q_1)$ are close enough. We can repeat this procedure $N$-times to obtain the result.

 
\end{proof}

Lemma \ref{lemma:shadowingfinal} completes the proof of Theorem \ref{thm:main}. Indeed the trajectory of the secular Hamiltonian $\mathtt{H}$ in \eqref{def:fullHamiltonian} with initial condition $p_0$  achieves the drift of energy stated in the theorem. 
It only remains to prove the statements on eccentricity and inclination stated in the   theorem. To this end, it is enough to recall the estimates on the inclination and eccentricity on the periodic and homoclinic orbits of the coplanar Hamiltonian given in Ansatz \ref{ansatz:periodic}. Then, since taking $\iM$ small enough, the shadowing orbits can be taken arbitrarily close to these periodic and homoclinic orbits, we obtain the statements in the theorem.


\subsection{Proof of Theorem \ref{thm:pseudoorbit}}\label{sec:pseudoorbit}
To construct the pseudo-orbit, the first step is to compare the inner and the scattering map obtained in  Theorem~\ref{theorem:PerturbedMaps}.
In order to do so, we apply  two steps of averaging either to the inner map or to one of the scattering maps (in some domains). 
These changes of coordinates straighten the $J$ component of one of the maps up to order $\OO(\iM^3)$ and therefore it becomes straightforward to compare the ``vertical jumps'' of the inner and outer maps.

Since we want to construct symplectic transformations, it is convenient to  straighten first the symplectic form given in~\eqref{def:formaSimplecticaInclined}.

\begin{lemma} 
Assume Ansatz~\ref{ansatz:periodic}. 
There exists an $\iM$-close to the identity change of coordinates $\Upsilon:\tD_{2\de}\times\TT\to\tD_{\de}\times\TT$, $(J,\OmegaM) =\Upsilon(\widecheck{J},\widecheck{\Omega}_{\rM})$,
 which transforms the symplectic form $a(J,\OmegaM) d \OmegaM \wedge d J$ given in~\eqref{def:formaSimplecticaInclined} into $
d \widecheck{\Omega}_{\rM} \wedge d\widecheck{J}$.

In these new coordinates,
\begin{itemize}
    \item The inner map is of the form
        \begin{equation*}
	\widecheck{\FF}^{\inner}_{\iM} \begin{pmatrix}
    \widecheck{J}
  \\ \widecheck{\Omega}_{\rM} 
	\end{pmatrix}
	=
	\begin{pmatrix}
	\widecheck{J} + \iM A_1(\widecheck{J},\widecheck{\Omega}_{\rM}) +
    \iM^2 \widecheck{A}_2(\widecheck{J},\widecheck{\Omega}_{\rM}) + 
    \OO(\iM^3)
  \\ 
  \widecheck{\Omega}_{\rM} + n_{\OmegaM} \left\{\TTT_0(\widecheck{J})
  + \iM \widecheck{\TTT}_1(\widecheck{J},\widecheck{\Omega}_{\rM})
  + \iM^2 \widecheck{\TTT}_2(\widecheck{J},\widecheck{\Omega}_{\rM}) 
  + \OO(\iM^3) \right\}
	\end{pmatrix},
\end{equation*}
where $(\widecheck{J},\widecheck{\Omega}_{\iM} )\in \tD_{2\delta} \times \mathbb{T}$, the function $\TTT_0$ is the one given for the coplanar inner map (see \eqref{def:innerMapCoplanar}), the function $A_1$ is given in Theorem~\ref{theorem:PerturbedMaps} and the functions $\widecheck{A}_2, \widecheck{\TTT}_1$ and $\widecheck{\TTT}_2$ satisfy that
\begin{align*}
	\NNN_{\OmegaM}(\widecheck{A}_2) = \{0, \pm 2\},
	\qquad
	\NNN_{\OmegaM}(\widecheck{\TTT}_1) = \{\pm 1\}, 
    \qquad
    \NNN_{\OmegaM}(\widecheck{\TTT
    }_2) = \{0, \pm 2\}.
\end{align*}
\item For  $*=\{\prim,\secn\}$, the scattering maps are of the form
\begin{equation*} 
	\widecheck{\FF}^{\outer,*}_{\iM} \begin{pmatrix}
    \widecheck{J} 
  \\ \widecheck{\Omega}_{\rM}
	\end{pmatrix}
	=
	\begin{pmatrix}
	\widecheck{J} + \iM B_1^*(\widecheck{J},\widecheck{\Omega}_{\rM})+ \iM^2 \widecheck{B}_2^*(\widecheck{J},\widecheck{\Omega}_{\rM}) + \OO(\iM^3) 
  \\ 
  \widecheck{\Omega}_{\rM} + n_{\OmegaM} \left\{ \zeta^*(\widecheck{J}) + \iM \widecheck{D}_1^{*}(\widecheck{J},\widecheck{\Omega}_{\rM})+ \iM^2 \widecheck{D}_2^*(\widecheck{J},\widecheck{\Omega}_{\rM}) + \OO(\iM^3) \right\}
	\end{pmatrix},
 \qquad 
\end{equation*}
where $(\widecheck{J},\widecheck{\Omega}_{\iM} )\in \tD^*_{2\delta} \times \mathbb{T}$, the functions $\zeta^*(J)$ and $B_1^*$ are given  in Lemma~\ref{lemma:integralsScattering} and~\eqref{theorem:PerturbedMaps} respectively  and the functions $\widecheck{B}_2^*, \widecheck{D}_1^*$ and $\widecheck{D}_2^*$ satisfy that
\begin{align*}
	\NNN_{\OmegaM}(\widecheck{B}_2^*) = \{0, \pm 2\},
	\qquad
	\NNN_{\OmegaM}(\widecheck{D}_1^*) = \{\pm 1\}, 
    \qquad
    \NNN_{\OmegaM}(\widecheck{D}_2^*) = \{0, \pm 2\}.
\end{align*}
\end{itemize}    
\end{lemma}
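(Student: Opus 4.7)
The plan is to construct $\Upsilon$ order by order in $\iM$ through a Darboux-type argument, and then transport the inner and scattering maps by conjugation, tracking the harmonic structure in $\OmegaM$ at each step. The guiding principle is to allow the change of coordinates to act only through the $\widecheck{\Omega}_\rM$--component at first order, which ensures that the leading $\iM$--terms $A_1$ and $B_1^*$ of Theorem~\ref{theorem:PerturbedMaps} are literally preserved in the new chart.

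\textbf{Construction of $\Upsilon$.} I look for a transformation of the form
\[
\Upsilon(\widecheck{J},\widecheck{\Omega}_\rM)=\Big(\widecheck{J}+\iM^2 g_2(\widecheck{J})+\OO(\iM^3),\;\widecheck{\Omega}_\rM+\iM\,\Phi_1(\widecheck{J},\widecheck{\Omega}_\rM)+\iM^2\,\Phi_2(\widecheck{J},\widecheck{\Omega}_\rM)+\OO(\iM^3)\Big),
\]
and expand the straightening identity $\Upsilon^*\bigl(a_{\iM}\,d\OmegaM\wedge dJ\bigr)=d\widecheck{\Omega}_\rM\wedge d\widecheck{J}$ in powers of $\iM$. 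Using Lemma~\ref{lemma:harmonicParametrizations}, the order--$\iM$ equation reads $\partial_{\widecheck{\Omega}_\rM}\Phi_1=-a_1$, which admits the unique zero-mean periodic solution with harmonics $\{\pm 1\}$ because $a_1$ has zero mean. The order--$\iM^2$ equation reduces to
\[
\partial_{\widecheck{\Omega}_\rM}\Phi_2+\partial_{\widecheck{J}}g_2=-a_2+a_1^2-(\partial_{\widecheck{\Omega}_\rM}a_1)\,\Phi_1.
\]
A direct Fourier computation shows that the cross terms $\langle a_1^2\rangle$ and $\langle (\partial_{\widecheck{\Omega}_\rM}a_1)\Phi_1\rangle$ cancel exactly (thanks to the explicit form of $\Phi_1$), so the only obstruction to solving for $\Phi_2$ periodic is the average $a_2^{[0]}(\widecheck{J})$. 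This is absorbed by setting $g_2(\widecheck{J})=-\int a_2^{[0]}(s)\,ds$, which has harmonic $\{0\}$; the remaining zero-mean right-hand side has harmonics $\{\pm 2\}$, and $\Phi_2$ inherits this same structure.

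\textbf{Transport of the inner map.} I compute $\widecheck{\FF}^{\inner}_{\iM}=\Upsilon^{-1}\circ \FF^{\inner}_{\iM}\circ \Upsilon$ by inserting the expansions of $\Upsilon$ and $\Upsilon^{-1}$ into the formulas of Theorem~\ref{theorem:PerturbedMaps}. Since $\Upsilon$ does not perturb the $J$--component at order $\iM$, the first--order $\widecheck{J}$--term is unchanged and equals $A_1(\widecheck{J},\widecheck{\Omega}_\rM)$. At order $\iM^2$, expanding $A_1(J^0,\OmegaM^0)$ around $(\widecheck{J},\widecheck{\Omega}_\rM)$ produces the additional term $(\partial_{\widecheck{\Omega}_\rM}A_1)\Phi_1$, while the contribution $\iM^2 g_2$ coming from the $J$--component of $\Upsilon$ cancels against its counterpart in $\Upsilon^{-1}$; hence
$\widecheck{A}_2=A_2+(\partial_{\widecheck{\Omega}_\rM}A_1)\,\Phi_1$,
whose harmonics lie in $\{0,\pm 2\}\cup(\{\pm 1\}\cdot\{\pm 1\})=\{0,\pm 2\}$. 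An analogous computation for the $\widecheck{\Omega}_\rM$--component gives
$\widecheck{\TTT}_1=\TTT_1+n_{\OmegaM}^{-1}\bigl[\Phi_1(\widecheck{J},\widecheck{\Omega}_\rM)-\Phi_1(\widecheck{J},\widecheck{\Omega}_\rM+n_{\OmegaM}\TTT_0(\widecheck{J}))\bigr]$,
still with harmonics $\{\pm 1\}$ (the $\widecheck{\Omega}_\rM$-shift preserves the harmonic content), and a more involved but similar expression for $\widecheck{\TTT}_2$ with harmonics $\{0,\pm 2\}$.

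\textbf{Scattering maps and main difficulty.} The same procedure applies verbatim to $\FF^{\outer,*}_{\iM}$, with $\TTT_0(\widecheck{J})$ replaced by $\zeta^*(\widecheck{J})$ in the shift appearing in $\widecheck{D}_1^*$ and with $A_1,A_2$ replaced by $B_1^*,B_2^*$ in the $J$--component; this immediately yields the announced harmonic structure. The main technical obstacle is the second-order bookkeeping in the $\Omega$--component: a naive expansion of $\Upsilon^{-1}$ produces terms $(\partial_{\widecheck{\Omega}_\rM}\Phi_1)\Phi_1$ and $\Phi_2(\widecheck{J},\widecheck{\Omega}_\rM^*)$ that could in principle contaminate the $\{0,\pm 2\}$ structure of $\widecheck{\TTT}_2$ and $\widecheck{D}_2^*$. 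The cancellation of the unwanted contributions rests on two facts: that $g_2$ is chosen independent of $\widecheck{\Omega}_\rM$, and that $\partial_{\widecheck{\Omega}_\rM}\Phi_1=-a_1$; once these identities are used, all remaining terms are products of functions with harmonics in $\{0,\pm 1,\pm 2\}$ whose combinations close within $\{0,\pm 2\}$.
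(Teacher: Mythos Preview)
Your proposal is correct and follows the expected strategy: the paper itself gives no proof of this lemma beyond the remark that it is ``analogous to that of \cite[Lemma~4.1]{fejoz2016kirkwood}'', so you have reconstructed (correctly) the Darboux--conjugation argument that lies behind that reference. Your ansatz---keeping the $J$--component of $\Upsilon$ trivial at order $\iM$ and allowing only a $\widecheck{J}$--dependent correction $g_2$ at order $\iM^2$---is exactly the right choice to leave $A_1$ and $B_1^*$ untouched, and your harmonic bookkeeping for $\widecheck{A}_2,\widecheck{\TTT}_1,\widecheck{\TTT}_2,\widecheck{B}_2^*,\widecheck{D}_1^*,\widecheck{D}_2^*$ is sound.

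Two minor comments. First, the cancellation $\langle a_1^2\rangle=\langle(\partial_{\widecheck{\Omega}_\rM}a_1)\Phi_1\rangle$ that you invoke is genuine (it follows from $\Phi_1=\partial_{\widecheck{\Omega}_\rM}a_1$ with the present harmonic content), but it is not actually needed for the lemma: even without it the residual average would still be a function of $\widecheck{J}$ alone and could be absorbed into $g_2$; only the explicit formula for $g_2$ would change. Second, your claim that ``all remaining terms\ldots close within $\{0,\pm2\}$'' in $\widecheck{\TTT}_2$ and $\widecheck{D}_2^*$ deserves one more line of justification in a write-up, since several of those terms carry a shift $\widecheck{\Omega}_\rM\mapsto\widecheck{\Omega}_\rM+n_{\OmegaM}\TTT_0(\widecheck{J})$ (or $+n_{\OmegaM}\zeta^*(\widecheck{J})$); the point is simply that such $\widecheck{J}$--dependent translations in the angle preserve the Fourier support in $\widecheck{\Omega}_\rM$.
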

The proof of this lemma is analogous to that of \cite[Lemma 4.1]{fejoz2016kirkwood}.

Now we perform an averaging procedure. The inner map has a resonance (see Lemma~\ref{corollary:innerMapCoplanar}) in whose neigborhood one cannot perform averaging. Then, we perform two steps of averaging to the inner map in a domain away from the resonance. On the contrary, in a neighborhood of the resonance we perform two steps of averaging to straighten the primary outer map (this is possible thanks to Ansatz~\ref{ansatz:phaseshiftouter} which ensures the absence of low order resonances).
%
%
Taking this into account, we consider the new domains
\begin{equation*} 
\begin{split}
\widehat{\tD}_{2\delta} = \left\{ J \in \tD_{2\delta} \, : \, 
\mathrm{dist}(J,\Jres) > 2\delta \right\}, 
\qquad
\tR_{5\delta}=(\Jres-5\de, \Jres+5\de),
\end{split}
\end{equation*}
where $\Jres$ is the constant introduced in Lemma \ref{corollary:innerMapCoplanar}.
Note that the two domains are chosen so that they overlap (and they will still overlap after applying the averaging changes of coordinates). Note that one can define analogously the domains $\widehat\tD^*_{2\de}$ (see \eqref{def:domainsJPerturbed}), for $*\in \{\prim,\secn\}$.

\begin{lemma}\label{lemma:averaginginner}
Fix $\de>0$ and assume Ansatz~\ref{ansatz:periodic}.
There exists a symplectic change of variables $\iM$-close to the identity $\wt\Upsilon:\widehat\tD_{3\de}\times\TT\to\widehat\tD_{2\de}\times\TT$, $(\widecheck{J},\widecheck{\Omega}_{\rM}) =\wt\Upsilon(\wJ,\wOmegaM)$,
such that, in these new coordinates,
\begin{itemize}
    \item The inner map is transformed into
        \begin{equation*}
	\wt{\FF}^{\inner}_{\iM} \begin{pmatrix}
    \wJ 
  \\ \wOmegaM 
	\end{pmatrix}
	=
	\begin{pmatrix}
	\wJ + \OO(\iM^3)
  \\ 
  \wOmegaM + n_{\OmegaM} \left\{\TTT_0(\wJ)
  + \iM^2 \wt{\TTT}_2(\wJ) 
  + \OO(\iM^3) \right\}
	\end{pmatrix},
\end{equation*}
where $(\wt{J},\wt{\Omega}_{\iM} )\in \widehat{\tD}_{3\delta}  \times \mathbb{T}$ and  $\TTT_0$ is  the function introduced in \eqref{def:innerMapCoplanar}.
\item For $*\in\{\prim,\secn\}$, the scattering maps are transformed to
\begin{equation*} 
	\wt\FF^{\outer,*}_{\iM} \begin{pmatrix}
    \wJ 
  \\ \wOmegaM 
	\end{pmatrix}
	=
	\begin{pmatrix}
	\wJ + \iM \wt{B}_1^*(\wJ,\wOmegaM) + \OO(\iM^2) 
  \\ 
  \wOmegaM + n_{\OmegaM} \left\{ \zeta^*(\wJ)  + \OO(\iM) \right\}
	\end{pmatrix},
\end{equation*}
where $(\wt{J},\wt{\Omega}_{\iM} )\in \widehat{\tD}_{3\delta}^* \times \mathbb{T}$, $\zeta^*$ is given in Lemma~\ref{lemma:integralsScattering} and
$\wt{B}_1^{*}(\wJ,\wOmegaM) = \wt{B}_1^{*,+}(\wJ) e^{i\wOmegaM}+\wt{B}_1^{*,-}(\wJ) e^{-i\wOmegaM}$ with 
\begin{align*}
    \wt{B}_1^{*,\pm} (\wJ) 
    &=
    {B}_1^{*,\pm} (\wJ) 
    -
     A_1^{\pm}(\wt J)
    \frac{e^{\pm i n_{\OmegaM} \zeta^*(\wJ)}-1}{e^{\pm i n_{\OmegaM} \TTT_0(\wJ)}-1}.
\end{align*}
\end{itemize}
\end{lemma}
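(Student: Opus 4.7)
The plan is to build $\wt\Upsilon$ as a composition $\wt\Upsilon_2\circ\wt\Upsilon_1$ of two $\iM$-close-to-identity symplectic changes of variables, obtained as the time-one flows of Hamiltonians $\iM S_1(\wJ,\wOmegaM)$ and $\iM^2 S_2(\wJ,\wOmegaM)$ respectively, chosen so as to cancel successively the $\OO(\iM)$ and $\OO(\iM^2)$ contributions to the $J$-component of the inner map. Everything is carried out on $\widehat\tD_{3\de}\times\TT$, which stays uniformly away from the inner resonance $\Jres$.

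First, a direct Lie-series computation using $\wt\Upsilon_1=\Id+\iM X_{S_1}+\OO(\iM^2)$ shows that the $\OO(\iM)$ term in the new $J$-component of the inner map equals
\begin{equation*}
A_1(\wJ,\wOmegaM) + \partial_{\wOmegaM}S_1(\wJ,\wOmegaM+n_{\OmegaM}\TTT_0(\wJ)) - \partial_{\wOmegaM}S_1(\wJ,\wOmegaM).
\end{equation*}
Since $\NNN_{\OmegaM}(A_1)=\{\pm 1\}$, writing $A_1=A_1^+ e^{i\wOmegaM}+A_1^- e^{-i\wOmegaM}$ and searching for $S_1$ with the same harmonic structure, this equation is solved explicitly by
\begin{equation*}
S_1(\wJ,\wOmegaM)=S_1^+(\wJ)\,e^{i\wOmegaM}+S_1^-(\wJ)\,e^{-i\wOmegaM},\qquad S_1^\pm(\wJ)=\frac{\mp i\,A_1^\pm(\wJ)}{1-e^{\pm i n_{\OmegaM}\TTT_0(\wJ)}},
\end{equation*}
which is analytic on $\widehat\tD_{3\de}\times\TT$ thanks to Lemma~\ref{corollary:innerMapCoplanar}. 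No zero-mean obstruction appears because $A_1$ carries only the $\pm 1$ harmonics. The second conjugation $\wt\Upsilon_2$ is performed in the same way: after $\wt\Upsilon_1$, the $\OO(\iM^2)$ remainder in the $J$-component has harmonics in $\{0,\pm 2\}$, and the $\pm 2$ harmonics are killed by solving the analogous cohomological equation for $S_2$, whose denominators $1-e^{\pm 2i n_{\OmegaM}\TTT_0(\wJ)}$ do not vanish on $\widehat\tD_{3\de}$ (after possibly shrinking $\de$). The zero-harmonic contribution to this remainder vanishes automatically by exactness of the symplectic form together with the fact that $J$ is a first integral at $\iM=0$, so no mean $\OO(\iM^2)$ term survives in the $J$-component. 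The induced corrections to the angular component are collected in $\wt\TTT_2$, producing the advertised form of $\wt\FF^{\inner}_{\iM}$.

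The same change of coordinates $\wt\Upsilon$ conjugates each scattering map. The zeroth-order scattering map reads $(\wJ,\wOmegaM)\mapsto(\wJ,\wOmegaM+n_{\OmegaM}\zeta^*(\wJ))$, and the analogous Lie-transform computation gives, at order $\iM$, that the $J$-component of the new scattering map satisfies
\begin{equation*}
\wt B_1^*(\wJ,\wOmegaM)=B_1^*(\wJ,\wOmegaM)+\partial_{\wOmegaM}S_1(\wJ,\wOmegaM+n_{\OmegaM}\zeta^*(\wJ))-\partial_{\wOmegaM}S_1(\wJ,\wOmegaM).
\end{equation*}
Projecting onto the $e^{\pm i\wOmegaM}$ harmonics and substituting the explicit form of $S_1^\pm$ yields
\begin{equation*}
\wt B_1^{*,\pm}(\wJ)=B_1^{*,\pm}(\wJ)-A_1^\pm(\wJ)\,\frac{e^{\pm i n_{\OmegaM}\zeta^*(\wJ)}-1}{e^{\pm i n_{\OmegaM}\TTT_0(\wJ)}-1},
\end{equation*}
which is exactly the claimed formula. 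Preservation of the harmonic structure of the higher-order remainders in the scattering maps is automatic, since both $S_1$ and $S_2$ only carry the harmonics of $A_1$ and of the second-order remainder of the inner map, which fit in $\{0,\pm 1,\pm 2\}$.

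The main subtlety, beyond keeping track of the algebra, is verifying that every denominator in the cohomological equations stays uniformly bounded away from zero on $\widehat\tD_{3\de}$. For the $\pm 1$ harmonics this is precisely the content of Lemma~\ref{corollary:innerMapCoplanar}. For the $\pm 2$ harmonics, note that $n_{\OmegaM}\TTT_0(J)\in[3.9\pi,4.15\pi]$ by Ansatz~\ref{ansatz:periodic}, so $2n_{\OmegaM}\TTT_0(J)\in[7.8\pi,8.3\pi]$ and the only possible resonance $2n_{\OmegaM}\TTT_0=8\pi$ occurs exactly at $J=\Jres$, which has been excised. All other regularity and remainder estimates propagate routinely through the composition with the two symplectic transformations.
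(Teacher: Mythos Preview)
Your proof is correct and follows the same two-step averaging scheme that the paper invokes by reference to \cite[Lemma~3.9]{fejoz2016kirkwood}: solve cohomological equations for the $\pm1$ harmonics first, then for the $\pm2$ harmonics, with denominators $e^{\pm i k\,n_{\OmegaM}\TTT_0}-1$ bounded away from zero on $\widehat\tD_{3\de}$ since the only resonance in the relevant range is at $\Jres$.

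Two small remarks. First, your justification for the vanishing of the zero harmonic at order $\iM^2$ is right in spirit but slightly misstated: what you need is exact symplecticity of the \emph{map} (inherited from the Hamiltonian Poincar\'e map restricted to the symplectic cylinder), not ``exactness of the symplectic form''. Exactness gives $\int_\TT \hat A_2(\wJ,\wOmegaM)\,d\wOmegaM=0$ directly; the fact that $J$ is conserved at $\iM=0$ is not needed separately. Second, you do not explicitly argue why the $\OO(\iM)$ term in the $\wOmegaM$-component of the inner map also disappears after $\wt\Upsilon_1$. This follows from symplecticity: once the $J$-component is flat at order $\iM$, preservation of $d\wOmegaM\wedge d\wJ$ forces $\partial_{\wOmegaM}$ of the new first-order angular term to vanish, and since that term has only $\pm1$ harmonics it must be identically zero. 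The same reasoning at order $\iM^2$ leaves only the $\wOmegaM$-independent piece $\wt\TTT_2(\wJ)$, completing the advertised form.
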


\begin{lemma}\label{lemma:averagingouter}
Fix $\de>0$.
Assume Ansätze~\ref{ansatz:periodic} and~\ref{ansatz:phaseshiftouter}.
There exists a symplectic change of variables $\iM$-close to the identity $\wh\Upsilon:\tR_{4\de}\times\TT\to\tR_{5\de}\times\TT$, $(\widecheck{J},\widecheck{\Omega}_{\rM}) =\wh\Upsilon(\wh J,\whOmegaM)$,
such that, in these new coordinates,
\begin{itemize}
\item The primary scattering map is transformed to
\begin{equation*} 
	\wh\FF^{\outer,\prim}_{\iM} \begin{pmatrix}
    \wh J 
  \\ \whOmegaM 
	\end{pmatrix}
	=
	\begin{pmatrix}
	\wh J + \OO(\iM^3) 
  \\ 
  \whOmegaM + n_{\OmegaM} \left\{ \zeta^{\prim}(\wh J)  +\iM^2 \wh D_2^{\prim}(\wh J)+\OO(\iM^3) \right\}
	\end{pmatrix},
\end{equation*}
where $(\wh{J},\wh{\Omega}_{\iM} )\in \widehat{\tR}_{4\delta} \times \mathbb{T}$ and  $\zeta^{\prim}$ is given in Lemma~\ref{lemma:integralsScattering}.
    \item The inner map is transformed to
        \begin{equation*}
	\wh{\FF}^{\inner}_{\iM} \begin{pmatrix}
    \wh J 
  \\ \whOmegaM 
	\end{pmatrix}
	=
	\begin{pmatrix}
	\wh J + \iM \widehat{A}_1(\wh J,\whOmegaM) +
      \OO(\iM^2)
  \\ 
  \whOmegaM + n_{\OmegaM} \left\{\TTT_0(\wh J)
  + \OO(\iM) \right\}
	\end{pmatrix},
\end{equation*}
where $(\wh{J},\wh{\Omega}_{\iM} )\in \widehat{\tR}_{4\delta} \times \mathbb{T}$, $\TTT_0$ is  given in \eqref{def:innerMapCoplanar} and $  \wh{A}_1(\wh J,\whOmegaM)= \wh{A}_1^{+}(\wh J) e^{i\whOmegaM} 
    +\wh{A}_1^{-}(\wh J) e^{-i\whOmegaM}$ with
\begin{align*}
    \wh{A}_1^{\pm} (\wh J) 
    &=
    {A}_1^{\pm} (\wh J) 
    -
     B_1^{\prim,\pm}(\wh J)
    \frac{e^{\pm i n_{\OmegaM}\TTT_0(\wh J) }-1}{e^{\pm i n_{\OmegaM} \zeta^{\prim}(\wh J)}-1}.
\end{align*}

\end{itemize}
\end{lemma}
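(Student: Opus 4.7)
The proof will follow exactly the same scheme as Lemma~\ref{lemma:averaginginner}, with the roles of the inner and primary scattering maps interchanged. Since both maps are already symplectic with respect to the canonical form $d\widecheck{\Omega}_{\rM}\wedge d\widecheck{J}$, the plan is to construct two successive symplectic near-identity changes generated by functions of the form
\[
\widecheck{J}\,\widehat{\Omega}_{\rM}+\iM S_1(\widehat{J},\widecheck{\Omega}_{\rM})+\iM^2 S_2(\widehat{J},\widecheck{\Omega}_{\rM}),
\]
chosen so that the $\whOmegaM$-dependent terms of $\widecheck{\FF}^{\outer,\prim}_{\iM}$ are removed at first and second order. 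In the overlap region $\tR_{5\delta}\times\TT$ of Ansatz~\ref{ansatz:phaseshiftouter}, the unperturbed dynamics of the primary scattering map is the rigid rotation $\widecheck{\Omega}_{\rM}\mapsto\widecheck{\Omega}_{\rM}+n_{\OmegaM}\zeta^{\prim}(\widecheck{J})$, so the cohomological equations that determine $S_1$ and $S_2$ are diagonal in Fourier modes and only require dividing the Fourier coefficients of $B^{\prim}_1$ and (the $\pm 2$-harmonics of) $\widecheck{B}^{\prim}_2$ by the small divisors $e^{\pm i n_{\OmegaM}\zeta^{\prim}(\widehat{J})}-1$ and $e^{\pm 2i n_{\OmegaM}\zeta^{\prim}(\widehat{J})}-1$, respectively.

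The key step is then to verify that these divisors do not vanish on $\tR_{5\delta}$. This is precisely the purpose of Ansatz~\ref{ansatz:phaseshiftouter}: the condition $\zeta^{\prim}(\Jres)\not\in\pi\ZZ$ (with the normalization of Figure~\ref{fig:ham_phaseshift}) ensures $n_{\OmegaM}\zeta^{\prim}(\Jres)\not\in 2\pi\ZZ$ and $2n_{\OmegaM}\zeta^{\prim}(\Jres)\not\in 2\pi\ZZ$, and by continuity these conditions persist on a full neighborhood of $\Jres$, provided $\delta$ is small enough. After solving for $S_1$ and $S_2$, the $\iM$- and $\iM^2$-order harmonics of $\widecheck{\FF}^{\outer,\prim}_{\iM}$ with $\OmegaM$-modes in $\{\pm 1\}\cup\{\pm 2\}$ are killed, while the zero-mode part of the second-order term contributes the autonomous function $\wh D^{\prim}_2(\wh J)$. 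Symplecticity of the change guarantees that the $\wh J$-component of the transformed map is also autonomous and, since the original map preserved the canonical form, the $\wh J$-component vanishes to order $\OO(\iM^3)$.

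The second half is to track the effect of the same change of variables on the inner map. Writing the generating-function change as $\widecheck{J}=\wh J+\iM\partial_{\whOmegaM}S_1(\wh J,\widecheck{\Omega}_{\rM})+\OO(\iM^2)$, $\widehat{\Omega}_{\rM}=\widecheck{\Omega}_{\rM}+\iM\partial_{\wh J}S_1+\OO(\iM^2)$, and conjugating, one obtains the standard cohomological-type formula
\[
\wh{A}_1(\wh J,\whOmegaM)=A_1(\wh J,\whOmegaM)+S_1(\wh J,\whOmegaM+n_{\OmegaM}\TTT_0(\wh J))-S_1(\wh J,\whOmegaM).
\]
Since $S_1$ was determined by $(e^{\pm i n_{\OmegaM}\zeta^{\prim}(\wh J)}-1)S_1^{\pm}(\wh J)=-B_1^{\prim,\pm}(\wh J)$, taking Fourier coefficients in $\whOmegaM$ yields
\[
\wh{A}_1^{\pm}(\wh J)=A_1^{\pm}(\wh J)-B_1^{\prim,\pm}(\wh J)\,\frac{e^{\pm i n_{\OmegaM}\TTT_0(\wh J)}-1}{e^{\pm i n_{\OmegaM}\zeta^{\prim}(\wh J)}-1},
\]
which is exactly the formula in the statement. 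The $\OmegaM$-component is left unchanged at leading order, giving the claimed $\TTT_0(\wh J)+\OO(\iM)$ term. The expected main obstacle is purely bookkeeping: carefully matching the harmonic structure obtained in Theorem~\ref{theorem:PerturbedMaps} with the action of each conjugation step so that the error orders (and the fact that $\NNN_{\OmegaM}$ of the various remainders remains under control) are preserved through the two averaging steps; all the analytic content is reduced to the non-vanishing of the two small divisors, which is guaranteed by Ansatz~\ref{ansatz:phaseshiftouter}.
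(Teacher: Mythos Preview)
Your proposal is correct and matches the paper's approach: the paper does not give a self-contained argument but simply states that the proof is analogous to \cite[Lemma 3.9]{fejoz2016kirkwood}, which is precisely the two-step averaging via generating functions that you outline. Your derivation of the formula for $\wh{A}_1^{\pm}$ from the cohomological equation $(e^{\pm i n_{\OmegaM}\zeta^{\prim}}-1)S_1^{\pm}=-B_1^{\prim,\pm}$ and the conjugation of the inner map is exactly the computation that underlies that reference, and your identification of the role of Ansatz~\ref{ansatz:phaseshiftouter} (non-vanishing of the divisors $e^{\pm i n_{\OmegaM}\zeta^{\prim}(\Jres)}-1$ and $e^{\pm 2i n_{\OmegaM}\zeta^{\prim}(\Jres)}-1$ near $\Jres$) is the correct one.
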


The proofs of these two lemmas are analogous to that of \cite[Lemma 3.9]{fejoz2016kirkwood}.

Now we analyze the KAM curves that these maps possess in each of the regions. We rely on a version of the KAM Theorem from \cite{DelshamsLS00} (see also \cite{Herman83}).

\begin{theorem}\label{thm:KAM}
Let  $f:[0,1]\times\TT\to \RR\times\TT$ be an exact symplectic $\CCC^\ell$ map with $\ell>4$. Assume that $f=f_0+\eps f_1$ where $f_0(I,\psi)=(I,\psi+A(I))$, $A$ is $\CCC^\ell$, $|\pa_IA(I)|>M$ and $\|f_1\|_{\CCC^\ell}\leq 1$. Then, if $\eps^{1/2}M^{-1}=\rho$ is sufficiently small, for a set of Diophantine numbers $\omega$ with exponent $\theta=5/4$, we can find 1-dimensional invariant tori which are graph of $\CCC^{\ell-3}$ functions $u_\omega$, the motion on them is $\CCC^{\ell-3}$ conjugate to the rotation by $\omega$, $\|u_\omega\|_{\CCC^{\ell-3}}\leq\eps^{1/2}$ and these tori cover the whole annulus $[0,1]\times \mathbb{T}$ except for a set of measure of order $M^{-1}\eps^{1/2}$. \end{theorem}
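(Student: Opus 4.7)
The plan is to prove this as a standard $C^\ell$ KAM theorem for exact symplectic twist maps of the annulus, following the Arnold--Moser scheme with Moser's smoothing technique to handle the finite differentiability, as carried out in \cite{Herman83} and refined in \cite{DelshamsLS00}. First I would fix a target rotation number $\omega$ (to be chosen Diophantine of exponent $\theta=5/4$) and seek an invariant torus as a Lagrangian graph $I=u_\omega(\psi)$ on which the induced dynamics is conjugate to $\psi\mapsto \psi+\omega$. The invariance condition, combined with a conjugacy equation, yields a nonlinear functional equation $\mathcal{F}(u_\omega,\omega)=0$ whose linearization at an approximate solution is a cohomological equation of the form
\[
v(\psi+\omega)-v(\psi)=g(\psi)-\langle g\rangle,
\]
solvable in Fourier series with the classical small-divisor loss controlled by $|e^{2\pi i k\omega}-1|^{-1}\lesssim \gamma^{-1}|k|^{\theta}$ under the Diophantine condition with constant $\gamma$.

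The second step is to run a Newton iteration in a scale of Banach spaces. At each step, the twist hypothesis $|\partial_I A(I)|>M$ is used in the customary Arnold trick to adjust the action so that the new approximate torus still carries the prescribed rotation number $\omega$, with the adjustment controlled by $M^{-1}$; the exact-symplecticness provides the vanishing-average condition needed to solve the cohomological equation. Since $f_1$ is only $C^\ell$, Moser-type smoothing operators must be inserted between iterations, producing a geometrically decreasing error at the cost of a controlled loss of derivatives. A careful book-keeping shows that three derivatives are lost in the limit, yielding a torus of class $C^{\ell-3}$ with $\|u_\omega\|_{C^{\ell-3}}\lesssim \epsilon^{1/2}$. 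The quantitative smallness requirement that drives the convergence of this scheme is precisely $\epsilon^{1/2}/(\gamma M)\ll 1$; choosing the Diophantine constant $\gamma\sim \epsilon^{1/2}$ gives the hypothesis $\rho=\epsilon^{1/2}M^{-1}\ll 1$ stated in the theorem.

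Finally, the measure estimate on the surviving tori follows from the standard Borel--Cantelli count: the set of $\omega\in[0,1]$ that fail the Diophantine condition with constant $\gamma$ and exponent $\theta>1$ has Lebesgue measure $O(\gamma)$. Pulling this set back through the frequency map $A\colon I\mapsto \omega$, whose inverse has Lipschitz constant bounded by $M^{-1}$, gives a complement in the action direction of measure $O(M^{-1}\gamma)=O(M^{-1}\epsilon^{1/2})$; integrating in $\psi$ (the tori are graphs over $\mathbb{T}$) preserves this bound.

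The main obstacle is, as always in finite-regularity KAM, the tight interplay between the Diophantine exponent $\theta$, the regularity index $\ell$, and the number of derivatives lost at each step of the smoothed Newton iteration: one needs $\ell$ strictly larger than $2\theta+\tfrac{3}{2}$ in order that the geometric series controlling the derivative loss converges and leaves room for the three-derivative loss in the final estimate. With $\theta=5/4$ the condition $\ell>4$ in the statement is exactly what is needed, and verifying this inequality carefully, together with the accompanying constants in the smoothing estimates, is the delicate part of the argument. All other ingredients (construction of the linearized solution, twist adjustment, measure of the Diophantine set) are then standard.
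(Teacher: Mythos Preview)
Your sketch is a faithful outline of the standard finite-regularity KAM proof (Moser smoothing plus Newton iteration with the Arnold twist adjustment), and it correctly identifies the references \cite{Herman83} and \cite{DelshamsLS00} as the source of the argument. However, the paper does not give its own proof of this theorem: it is simply quoted from \cite{DelshamsLS00} (with a pointer to \cite{Herman83}) and then applied as a black box in Lemma~\ref{lemma:transitionchain}. So there is no ``paper's proof'' to compare against; your proposal is essentially a summary of what those cited works do.
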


Lemmas \ref{lemma:averaginginner} and \ref{lemma:averagingouter} and Theorem \ref{thm:KAM} imply the following lemma.

\begin{lemma}\label{lemma:transitionchain}
Fix $\de>0$ small. Then, for $\iM>0$ small enough the following is satisfied.
\begin{itemize}
 \item There exists a sequence of tori $\{\TT_{1,k}\}_{k=1}^{N_1}\subset\wt\Lambda_{\iM,\de}$ which are invariant by the map ${\FF}^{\inner}_{\iM}$ and whose dynamics are conjugated to a quasi-periodic rigid rotation such that 
 \begin{equation*}
\FF^{\outer,*}_{\iM}(\TT_{1,k})\pitchfork \TT_{1,k+1}, \qquad k=1\ldots N_1-1,
 \end{equation*}
for either $*=\prim$ or $*=\secn$ and 
\begin{equation}\label{def:firstlasttorus}
\TT_{1,1}\subset [J_{\min}, J_{\min}+2\de]\qquad \text{and} \qquad \TT_{1,N_1}\subset [\Jres-5\de, \Jres-2\de].
\end{equation}
 \item There exists a sequence of tori $\{\TT_{2,k}\}_{k=2}^{N_2}\subset\wt\Lambda_{\iM,\de}$ which are invariant by the map ${\FF}^{\outer,\prim}_{\iM}$ and whose dynamics are conjugated to a quasi-periodic rigid rotation such that 
 \[
\FF^{\inner}_{\iM}(\TT_{2,k})\pitchfork \TT_{2,k+1}, \qquad k=1\ldots N_2-1
 \]
 and 
\[
\TT_{2,1}\subset [\Jres-5\de, \Jres-2\de]\qquad \text{and} \qquad \TT_{2,N_2}\subset [\Jres+2\de, \Jres+5\de].
\]
Moreover,
\begin{equation}\label{def:toriInnerOuter}
\TT_{1,N_1}\pitchfork \TT_{2,1}.
\end{equation}
 \item There exists a sequence of tori $\{\TT_{3,k}\}_{k=1}^{N_3}\subset\wt\Lambda_{\iM,\de}$ which are invariant by the map ${\FF}^{\inner}_{\iM}$ and whose dynamics are conjugated to a quasi-periodic rigid rotation such that 
 \[
\FF^{\outer,*}_{\iM}(\TT_{3,k})\pitchfork \TT_{3,k+1}, \qquad k=1\ldots N_3-1,
 \]
for either $*=\prim$ or $*=\secn$ and 
\[
\TT_{3,1}\subset [\Jres+2\de, \Jres+5\de]\qquad \text{and} \qquad \TT_{3,N_3}\subset [J_{\max}-2\de, J_{\max}].
\]
Moreover,
\[
\TT_{2,N_2}\pitchfork \TT_{3,1}.
\]
\end{itemize}
\end{lemma}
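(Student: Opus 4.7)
The plan is to apply the KAM theorem (Theorem \ref{thm:KAM}) to each of the two straightened maps furnished by Lemmas \ref{lemma:averaginginner} and \ref{lemma:averagingouter}, obtaining large Cantor families of invariant tori, and then to use the leading-order $J$-displacements of the \emph{non-straightened} map in each region, which Ansatz \ref{ansatz:straightening} keeps away from zero at order $\iM$, to supply the required transverse heteroclinics.

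First, in the non-resonant region, Lemma \ref{lemma:averaginginner} presents the inner map as an $\OO(\iM^3)$-perturbation of the integrable twist $(\wJ,\wOmegaM)\mapsto(\wJ,\wOmegaM+n_{\OmegaM}\TTT_0(\wJ))$, with twist $\pa_J\TTT_0<0$ from Lemma \ref{corollary:innerMapCoplanar}. Applying Theorem \ref{thm:KAM} with $\eps=\iM^3$ yields a Cantor family of KAM tori, each a $\CCC^{r-3}$ graph $\wJ=u_\omega(\wOmegaM)$ of $\CCC^0$-size $\OO(\iM^{3/2})$, with complement of measure $\OO(\iM^{3/2})$. In these coordinates, the outer map $\wt\FF^{\outer,*}_{\iM}$ produces a $J$-displacement
\[
\iM\bigl(\wt B_1^{*,+}(\wJ)e^{i\wOmegaM}+\wt B_1^{*,-}(\wJ)e^{-i\wOmegaM}\bigr)+\OO(\iM^2),\qquad \wt B_1^{*,\pm}(\wJ)=\frac{f^{*}_\pm(\wJ)}{e^{\pm i n_{\OmegaM}\TTT_0(\wJ)}-1}.
\]
Ansatz \ref{ansatz:straightening} together with $n_{\OmegaM}\TTT_0\notin 2\pi\ZZ$ on $\widehat{\tD}_{3\de}$ (Lemma \ref{corollary:innerMapCoplanar}) bounds $|\wt B_1^{*,\pm}|$ below by some $c>0$, so the image of each inner-KAM torus under the outer map oscillates in $J$ across a band of width $c\iM\gg\OO(\iM^{3/2})$ and therefore crosses many neighbouring tori transversely; one such torus is selected as $\TT_{1,k+1}$. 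At the finitely many energies where the primary channel is tangent (Table \ref{tab:reldiff}), one switches to $*=\secn$, which is transverse there. Iterating from a torus near $J_{\min}$ up to one in $(\Jres-5\de,\Jres-2\de)$ builds the first chain; a mirror argument above the resonance produces the third.

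For the second sequence the roles of the two maps are exchanged. Lemma \ref{lemma:averagingouter} writes $\wh\FF^{\outer,\prim}_{\iM}$ as an $\OO(\iM^3)$-perturbation of the integrable twist $(\wh J,\whOmegaM)\mapsto(\wh J,\whOmegaM+n_{\OmegaM}\zeta^{\prim}(\wh J))$, whose twist $\pa_J\zeta^{\prim}(\Jres)\neq 0$ is supplied by Ansatz \ref{ansatz:phaseshiftouter}; Theorem \ref{thm:KAM} then delivers a dense family of outer-KAM tori throughout $\tR_{4\de}$. The inner map $\wh\FF^{\inner}_{\iM}$ produces a $J$-jump of leading amplitude
\[
\iM\bigl(\wh A_1^{+}(\wh J)e^{i\whOmegaM}+\wh A_1^{-}(\wh J)e^{-i\whOmegaM}\bigr)+\OO(\iM^2),\qquad \wh A_1^{\pm}(\wh J)=-\frac{f^{\prim}_\pm(\wh J)}{e^{\pm i n_{\OmegaM}\zeta^{\prim}(\wh J)}-1},
\]
which is again uniformly of size $\iM$ by Ansätze \ref{ansatz:phaseshiftouter} and \ref{ansatz:straightening}. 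The same oscillation-versus-gap comparison as before produces the transition chain $\TT_{2,1},\ldots,\TT_{2,N_2}$ via iterates of $\wh\FF^{\inner}_{\iM}$.

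It remains to glue the chains at \eqref{def:toriInnerOuter}. Using the density of each KAM family, we pick $\TT_{1,N_1}$ (inner-KAM) and $\TT_{2,1}$ (outer-KAM) with mean $J$-heights that coincide up to $\OO(\iM^{3/2})$. Pulled back to the original coordinates, each torus is $\OO(\iM)$-close to a horizontal circle, the first-harmonic correction being $-\iM\,\pa_{\OmegaM}S^{\inner}$, respectively $-\iM\,\pa_{\OmegaM}S^{\outer}$, where $S^{\inner},S^{\outer}$ are the generators of the two averaging transformations. A direct computation of their Fourier coefficients at $e^{\pm i\OmegaM}$ shows that the difference is proportional to $f^{\prim}_\pm/[(e^{\pm i n_{\OmegaM}\TTT_0}-1)(e^{\pm i n_{\OmegaM}\zeta^{\prim}}-1)]$, which is nonzero precisely by Ansatz \ref{ansatz:straightening} combined with the nonresonance assumptions. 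Thus the two tori share the same mean height but have $\OmegaM$-profiles differing at order $\iM$; their difference is a $2\pi$-periodic function with two simple zeros, yielding a transverse intersection. The main obstacle in the whole scheme is exactly this last non-proportionality of KAM profiles, and it is precisely what Ansatz \ref{ansatz:straightening} is designed to ensure.
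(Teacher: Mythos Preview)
Your proof is correct and follows the same approach as the paper: apply Theorem \ref{thm:KAM} to the averaged inner (resp.\ primary outer) map from Lemmas \ref{lemma:averaginginner}--\ref{lemma:averagingouter}, then use the $\iM$-size oscillation of the other map's $J$-component (nonvanishing by Ansatz \ref{ansatz:straightening}) against the $\OO(\iM^{3/2})$ KAM gaps to produce the transverse crossings. Your treatment of the gluing \eqref{def:toriInnerOuter}---computing the difference of the two averaging profiles and identifying it with $f^{\prim}_\pm/[(e^{\pm i n_{\OmegaM}\TTT_0}-1)(e^{\pm i n_{\OmegaM}\zeta^{\prim}}-1)]$---is more explicit than the paper's one-line appeal to ``analogous arguments'', but the underlying idea is identical.
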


\begin{proof}
Since the proofs of the three statements follow exactly the same lines, we only prove the first one. We prove the statement in the coordinates provided by Lemma~\ref{lemma:averaginginner}. Since they are $\OO(\iM)$-close to the original coordinates, one can easily deduce the statement for the original coordinates from that of the averaging coordinates.

By Lemma \ref{lemma:averaginginner}, the inner map $\wt{\FF}^{\inner}_{\iM}$ is $\OO(\iM^3)$-close to integrable. Moreover, by Lemma \ref{corollary:innerMapCoplanar} the map is twist and the twist has a lower bound independent of $\iM$. Then, Theorem \ref{thm:KAM} implies that there is a sequence of invariant tori $\{\wt\TT_{1,k}\}_{k=1}^{N_1}\subset\wt\Lambda_{\iM,\de}$ with quasi-periodic dynamics which satisfy \eqref{def:firstlasttorus} and 
\begin{equation}\label{disttori}
\mathrm{dist}\left(\wt\TT_{1,k},\wt\TT_{1,k+1}\right)\leq C \iM^{3/2}\qquad k=1,\ldots ,N_1-1.
\end{equation}
for some $C>0$.

Let $u_k$ be such that $\wt \TT_{1,k}$ can be expressed as graph as  $J=u_k(\OmegaM)$. From the expression of $\wt \FF^{\inner}_{\iM}$ in Lemma~\ref{lemma:averaginginner},  $u_k(\OmegaM)=J^k + \mathcal{O}(\iM^3)$  with $J^k$ a constant. Then 
$\wt \FF^{\outer,*}_{\iM} (\wt \TT_{1,k})$ is the graph of 
$$
w_{k+1}^{\outer}(\OmegaM)= J^k + \iM B_1(J^k,\OmegaM-n_{\OmegaM} \zeta^*(J^k)) + \mathcal{O}(\iM^2).
$$
Therefore the intersection 
$\wt\FF^{\outer,*}_{\iM}(\wt\TT_{1,k})\cap \wt\TT_{1,k+1}$
is defined by the points satisfying
$$
J^k + \iM \wt B_1^* \big (J^k,\OmegaM-n_{\OmegaM} \zeta^*(J^k) \big ) = J^{k+1} + \mathcal{O}(\iM^3).
$$
This condition, since by~\eqref{disttori}, $|J^k-J^{k+1}|\leq C\iM^{3/2}$, is equivalent to  
$$
\wt B_1^* \big (J^k,\OmegaM-n_{\OmegaM} \zeta^*(J^k) \big ) = \mathcal{O}(\iM^{1/2}). 
$$
Now, by Ansatz \ref{ansatz:straightening}, the functions ${\wt B}_1^{*,\pm}$ introduced in Lemma \ref{lemma:averaginginner} do not vanish for $J\in \widehat\tD^*_{\de}$ and $B_1^*$ has just harmonics $\pm 1$. Therefore one can easily show that 
$$ 
\wt B_1^*(J,\OmegaM)= |\wt B_1^{*,+}(J)| \cos (\OmegaM + \psi(J)), \qquad \psi= \mathrm{arg} (B_1^{*,+}(J))
$$
and to conclude that $\wt \FF^{\outer,*}_{\iM}(\wt \TT_{1,k})$ and $\wt \TT_{1,k+1}$, for $k=1,\ldots ,N_1-1$, intersects transversally. 

Finally, doing analogous arguments as the previous one, the transversality in \eqref{def:toriInnerOuter} is a direct consequence of Ansatz \ref{ansatz:straightening}.

%
%
%
\end{proof}

Then, the pseudo-orbit given in Theorem~\ref{thm:pseudoorbit} can be easily obtained from the  sequence of tori given by Lemma \ref{lemma:transitionchain}. Indeed, note that the fact the tori are quasi-periodic imply the following. Take points 
\[
P_1\in \FF^{\outer,*}_{\iM}(\TT_{1,k-1})\pitchfork \TT_{1,k}\quad \text{and}\quad P_2\in \FF^{\outer,*}_{\iM}(\TT_{1,k})\pitchfork \TT_{1,k+1}
\]
Then, for any  $\nu>0$ arbitrarily small, there exists $K$ such that 
\[
\left|(\FF^{\inner}_{\iM})^K(P_1)-P_2\right|\leq \nu.
\]
Therefore, to construct the pseudo-orbit it is enough to use that both the inner and outer maps are regular with respect to the parameter $\iM$.


\section{Conclusions}

In this work, we have shown how to model the eccentricity growth for the Galileo constellation by an Arnold diffusion mechanism. To this end, we have considered the full quadrupolar expansion of the lunar gravitational perturbation, coupled with the Earth's oblateness. By assuming that the Moon lies on the ecliptic plane, the dynamical system is autonomous and we can compute numerically the normally hyperbolic invariant manifold stemming from the $2g+h$ resonance and the associated stable and unstable manifolds. Then, we  are able to describe the full dynamics under the assumption that the inclination of the Moon is small enough. Indeed, in this regime, the  cylinder, its dynamics and its invariant manifolds are close to those of the coplanar  one. In other words, the inner map describing the cylinder dynamics and the outer map describing the homoclinic connections to the cylinder are first derived for the coplanar case, and then extended to the full system by means of a perturbative approach, assuming the lunar inclination as a small parameter. Thanks to the existence of the homoclinic connections, we are able to concatenate invariant objects along which the eccentricity increases, on different energy levels. There exist orbits that shadow the sequence of homoclinic orbits. Along these orbits, for $a=29600$ km, the eccentricity can transition from $0$ to $0.78$ and higher, eventually to achieve a re-entry.

The work is based on the idea that the chaotic behavior associated with the homoclinic connection can be exploited to jump from one energy level to the other. Although already proposed in very recent works \cite{Daquin2022,Legnaro2023}, the Arnold diffusion is handled here in a semi-analytical way, considering the full model. 
Possible resonance overlapings, although detected in the numerical computation, are not considered as the main trigger to get to the atmospheric reentry. As a matter of fact, in the normally hyperbolic invariant cylinder the resonance $2h-\OmegaM$ plays a role. This was already mentioned in \cite{Daquin2022}, where they see that, when $a= 29600$ km, both $2g+h$ and $2h-\OmegaM$ resonances interact. However, under the assumption that $i_\rM>0$ is small, the second resonance is weak and, in particular, it does not break up the invariant cylinder that exists along the $2g+h$ resonance. Therefore, one can apply an Arnold-like mechanism to drift along the $2g+h$ resonance even in the presence of a crossing weak resonance.

The procedure developed is general and can be applied to other resonances (e.g., for GLONASS) and other values of semi-major axis, for instance to show where to locate initially the satellites to facilitate eventually the end-of-life phase. The same argument can be applied also to design very stable graveyard solutions, by exploiting the dynamics in such a way that the excursion in eccentricity along the stable and unstable manifolds gets lower and lower (0 in the limit)\footnote{Note that at the resonances one can find stability zones associated to secondary tori and elliptic periodic orbits. Their analysis is beyond the scope of this paper.}. 

The analysis and the tools provided in the work lay the foundations to study the problem with the actual value of $i_\rM$. In this regard, we expect that the mechanism will persist and that the time of diffusion will reduce, but that the second resonance could be significant\footnote{See \cite{Kaloshin:2020} for the analysis of Arnold diffusion along double  (strong) resonances.}.

\appendix

\section{Expression of the Hamiltonian}

This appendix is devoted to compute explicit computations for the perturbative term of the Hamiltonians $\rH_1$ given in \eqref{def:hamiltonianDelaunayH1} and $\HH_1$ in \eqref{def:hamiltonianPoincareH1}.
In Appendix~\ref{appendix:HamiltonianDelaunay}, we obtain an expression for $\rH_1$ in the slow-fast coordinates introduced in Section~\ref{sec:slowfast} and in Appendix~\ref{appendix:HamiltonianPoincare} an expression for $\HH_1$ in the Poincar\'e coordinates introduced in Section~\ref{sec:poincare}.



\subsection{Hamiltonian in slow-fast coordinates $(y,x)$}
\label{appendix:HamiltonianDelaunay}

In this section, we compute explicit expressions for the Hamiltonian $\rH_1$ given in \eqref{def:hamiltonianDelaunayH1}. Let us recall its expression here:
\bes
\begin{split}
	\rH_1&(y , \Gam, x, h, \OmegaM;\iM)
	=	-\frac{\rho_1}{L^2} 
	\sum_{m=0}^2	
	\sum_{p=0}^2 
	D_{m,p}(y,\Gam)\sum_{s=0}^2				
	c_{m,s} F_{2,s,1}(i_\rM)\\
	&\times\Big[
	U_2^{m,-s}(\epsilon)\cos\Big(\psi_{m, p, s}(x,h,\OmegaM)\Big)
	+	U_2^{m,s}(\epsilon)\cos\Big(\psi_{m, p, -s}(x,h,\OmegaM)\Big)
	\Big],
\end{split}
\ees
where $D_{m,p} = \Dt_{m,p} \circ \UDelaunay$ 
and $\psi_{m,p,s}=\psit_{m,p,s} \circ \UDelaunay$ (see \eqref{def:Dtilde}, \eqref{def:PsiTilde} and~\eqref{eq:xy} for the change of coordinates).
In addition, $U_2^{m, \mp s}(\epsilon)$ is the Giacaglia function given in Table~\ref{tab:U}.

Applying the corresponding change of coordinates, one obtains the expressions for $D_{m,p}$ given in Table~\ref{tab:functionsDelaunay}.
\begin{table}[!t]
	\begin{center}
		\begin{tabular}{llll}
			\hline	
			$m$ 	&	$s$		&	$U_2^{m,s}(\epsilon)$ 									&	$\simeq$\\
			\hline 	
			0 		& 	0 		& 	$1-6 C^{2}+6 C^{4}$ 							&	0.762646\\
			0 		& 	-1 		&  	$-2 C S^{-1}\left(2 C^{4}-3 C^{2}+1\right)$		&	0.364961\\ 
			0 		& 	1 		&  $-2 C S\left(1-2 C^{2}\right)$					&	0.364961\\
			0 		& 	-2 		&  $C^{2} S^{-2}\left(C^{2}-1\right)^{2}$			&	0.039558\\
			0 		& 	2 		&  $C^2S^2$											&	0.039558\\
			1 		& 	0 		&  $-3 C S^{-1}\left(2 C^{4}-3 C^{2}+1\right)$	 	&	0.547442\\
			1 		& 	-1 		&  $S^{-2}\left(4 C^{6}-9 C^{4}+6 C^{2}-1\right)$	&	0.116974\\
			1 		& 	1 		&  $C^{2}\left(4 C^{2}-3\right)$					&	0.800502\\
			1 		& 	-2 		&  $-C S^{-3}\left(C^{2}-1\right)^{3}$				&	0.008206\\
			1 		& 	2 		&  $-C^{3} S$										&	-0.190687\\
			2 		& 	0 		&  $6 C^{2} S^{-2}\left(C^{2}-1\right)^{2}$	 		&	0.237353\\
			2 		& 	-1 		&  $-4 C S^{-3}\left(C^{2}-1\right)^{3}$			&	0.032826\\
			2 		& 	1 		&  $-4 C^{3} S^{-1}\left(C^{2}-1\right)$			&	0.762750\\
			2 		& 	-2 		&  $S^{-4}\left(C^{2}-1\right)^{4}$					&	0.001702\\
			2 		& 	2 		&  $C^4$											&	0.919179\\
			\hline
		\end{tabular}
		\caption{The Giacaglia function $U_2^{m, s}(\epsilon)$ for the moon perturbation (see~\cite{G1974}) where $C=\cos\frac{\epsilon}2$ and $S=\sin\frac{\epsilon}2$ and its value for $\epsilon = 23.44\degre$.}
		\label{tab:U}
	\end{center}
\end{table}
 	\begin{table}[h]
	\begin{tabular}{cclc}
		\hline	
		$m$ 	&	$p$		& \multicolumn{1}{c}{$D_{m,p}(y,\Gam)$}	& $\psi_{m,p,0}$	\\
		\hline 	
		0 		& 	0 		&	
		$-\frac{15}{64}(Ly)^{-2}(y - \Gam)(3y + \Gam)\left(L - 2 y\right)\left(L + 2 y\right)$
		& $x-h$
		\\
		0 		& 	1 		&
		$\phantom{-}\frac{1}{32}(Ly)^{-2}( y^2 - 6\Gam y - 3\Gam^2)\left(5L^2	-	12 y^2\right)$
		& $0$
		\\
		0 		& 	2 		& 	
		$-\frac{15}{64}(Ly)^{-2}(y - \Gam)(3y + \Gam)\left(L - 2 y\right)\left(L + 2 y\right)$
		& $-x+h$
		\\
		1 		& 	0 		& 
		$\phantom{-}\frac{15}{32}(Ly)^{-2}\sqrt{(y - \Gam)(3y + \Gam)}\left(3y + \Gam\right)\left(L - 2 y\right)\left(L + 2 y\right)$	
		& $x$	 
		\\
		1 		& 	1 		& 
		$-\frac{3}{16}(Ly)^{-2}\sqrt{(y - \Gam)(3y + \Gam)}(\Gam + y)\left(5L^2	-	12y^2\right)$
		& $h$		 
		\\
		1 		& 	2 		& 	
		$-\frac{15}{32}(Ly)^{-2}\sqrt{(y - \Gam)(3y + \Gam)}\left(y-\Gam\right)\left(L - 2 y\right)\left(L + 2 y\right)$  
		& $-x+2h$
		\\
		2 		& 	0 		& 
		$\phantom{-}\frac{15}{32}(Ly)^{-2}\left(3y +\Gam\right)^{2}\left(L - 2 y\right)\left(L + 2 y\right)$
		& $x+h$
		\\
		2 		& 	1 		& 	
		$\phantom{-}\frac{3}{16}(Ly)^{-2}(y - \Gam)(3y + \Gam)\left(5L^2	-	12y^2\right)$
		& $2h$
		\\
		2 		& 	2 		& 	
		$\phantom{-}\frac{15}{32}(Ly)^{-2}\left(y-\Gam\right)^{2}\left(L - 2 y\right)\left(L + 2 y\right)$
		& $-x+3h$
		\\
		\hline
	\end{tabular}
	\caption{Computation of the functions $(D_{m,p})_{m,p\in\{0,1,2\}}$ and  $(\psi_{m,p, 0})_{m,p\in\{0,1,2\}}$ for the prograde case.}
	\label{tab:functionsDelaunay}
\end{table}
Moreover, 
\begin{align*}
	\psi_{m,p, s}(x,h,\OmegaM) &=	(1 -p)x	
	-	(1 -p -m) h	+	s\left(\OmegaM	-	\frac{\pi}{2}\right)	
	-	\ry_{\modu{s}}\pi.
\end{align*}
with $\ry_{\modu{s}}$ as given in \eqref{def:auxiliaryFunctionsOriginal}.
Then, 
\begin{align*}
	\psi_{m,p, 0}(x,h) &= (1 -p)x -(1 -p -m) h,	\\
	\psi_{m,p, 1}(x,h,\OmegaM) &=	\psi_{m,p, 0}(x,h) + \OmegaM - \pi,	\\
	\psi_{m,p, -1}(x,h,\OmegaM) &=	\psi_{m,p, 0}(x,h) - \OmegaM,	\\
	\psi_{m,p, 2}(x,h,\OmegaM) &=	\psi_{m,p, 0}(x,h) + 2\OmegaM - \pi,	\\
	\psi_{m,p, -2}(x,h,\OmegaM) &=	\psi_{m,p, 0}(x,h) - 2\OmegaM + \pi.
\end{align*}
See Table~\ref{tab:functionsDelaunay} for the values of $\psi_{m, p, 0}(x,h)$.
Moreover, notice that for $s,m\in\{0,1,2\}$ the constants $c_{m,s}$ as defined in~\eqref{def:auxiliaryFunctionsOriginal} do not depend on $s$. Therefore, we denote
\begin{align*}
	\hat{c}_0 := c_{0,s} = \frac12, \qquad
	\hat{c}_1 := c_{1,s} = \frac13, \qquad
	\hat{c}_2 := c_{2,s} = -\frac1{12}.
\end{align*}

Applying all these expressions, one can express the Hamiltonian $\rH_1$ as a series in $\iM$. Indeed, considering the Kaula's inclination functions in~\eqref{eq:KaulaInclinationMoon}, one has that
\begin{align*} 
	F_{2,s,1}(\iM) 
	=	
	\left\{\begin{array}{ll}
		-\frac12+\OO(\iM^2)		&	\mbox{if $s=0$,}\\[0.5em]
		-\frac32 \iM + \OO(\iM^3) 				& 	\mbox{if $s=1$,}\\[0.5em]
		\OO(\iM^2)		& 	\mbox{if $s=2$.}
	\end{array}\right. 
\end{align*}
Therefore, the Hamiltonian $\rH_1$ can be expressed as
\begin{equation}\label{eq:expansionH1Delaunay}
\begin{aligned}
	\rH_{1}(y,\Gam,x,h,\OmegaM;\iM) =& \,
	\rH_{\CP,1}(y,\Gam,x,h) 
	+ \iM \rR(y,\Gam,x,h,\OmegaM;\iM),
	\\
	\rR(y,\Gam,x,h,\OmegaM;\iM) 
	=& \,
	\rR_1(y,\Gam,x,h,\OmegaM) \\
	&+
	\iM \rR_2(y,\Gam,x,h,\OmegaM) 
	+ 
	\OO(\iM^3),
\end{aligned}
\end{equation}
where
\begin{align*}
	\rH_{\CP,1}(y,\Gam, x, h)
	&=	\frac{\rho_1}{L^2}					
	\sum_{m=0}^2
	\sum_{p=0}^2 
	f_m^0	
	D_{m,p}(y,\Gam)
	\cos\Big(\psi_{m, p, 0}(x,h)	\Big),
	\\
	\rR_1(y,\Gam,x,h,\OmegaM) &=
	\cos(\OmegaM) \rR_{\cos,1}(y,\Gam,x,h) 
	+ \sin(\OmegaM) \rR_{\sin,1}(y,\Gam,x,h),
	\\
	\rR_{\cos,1}(y,\Gam,x,h) 
	&=	\frac{3\rho_1}{2 L^2}					
	\sum_{m=0}^2
	\sum_{p=0}^2 
	f_m^{\cos} D_{m,p}(y,\Gam)
	\cos\Big( 	\psi_{m,p,0}(x,h) \Big),
	\\
	\rR_{\sin,1}(y,\Gam,x,h) 
	&=	\frac{3\rho_1}{2 L^2}					
	\sum_{m=0}^2
	\sum_{p=0}^2 
	f_m^{\sin} D_{m,p}(y,\Gam)
	\sin\Big( 	\psi_{m,p,0}(x,h) \Big), 
\end{align*}
with
\begin{align*}
	{f}^0_m = \hat{c}_m U_2^{m,0}, 
	\quad
	{f}_m^{\cos} = \hat{c}_m \left( U_2^{m,1} - U_2^{m,-1} \right),
	\quad
	{f}_m^{\sin} = \hat{c}_m \left( U_2^{m,1} + U_2^{m,-1} \right).
\end{align*}
In addition, the harmonics of $\rR_2$ satisfy that (see Definition~\ref{def:numberharmonics})
\[
\NNN_{\OmegaM}(\rR_2) = \{0,\pm 2\}.
\]

 \subsection{Hamiltonian in Poincar\'e coordinates \texorpdfstring{$(\eta,\xi)$}{(eta,xi)}}
 \label{appendix:HamiltonianPoincare}
In this section, we compute explicit expressions for the Hamiltonian $\HH_1$ given in \eqref{def:hamiltonianPoincareH1}. Let us recall that
 \begin{equation*}
 	\begin{split}
 		\HH_1&(\eta , \Gam, \xi, h, \OmegaM;\iM)
 		=	(\rH_1 \circ \UPoincare) (\eta , \Gam, \xi, h, \OmegaM;\iM),
 	\end{split}
 \end{equation*}
with $\UPoincare$ as given in \eqref{eq:xieta}. 
Following the expression in \eqref{eq:expansionH1Delaunay}, one has that
\begin{equation}\label{eq:expansionH1Poincare}
\begin{aligned}
	\HH_{1}(\eta,\Gam,\xi,h,\OmegaM;\iM) =& \,
	\HH_{\CP,1}(\eta,\Gam,\xi,h) 
	+ \iM \RRR(\eta,\Gam,\xi,h,\OmegaM;\iM) ,
	\\
	\RRR(\eta,\Gam,\xi,h,\OmegaM;\iM) =& \,
	\RRR_1(\eta,\Gam,\xi,h,\OmegaM)
	\\
	&+ 
	\iM\RRR_2(\eta,\Gam,\xi,h,\OmegaM)
	+ 
	\OO(\iM^3),
\end{aligned}
\end{equation}
where $\HH_{\CP,1} = \rH_{\CP,1} \circ \UPoincare$, $\RRR_{1} = \rR_{1} \circ \UPoincare$ and
$\RRR_{2}=\rR_{2}\circ \UPoincare$.
Taking into account that the Poincar\'e change of coordinates satisfy that
\begin{align*}
	y = \frac{2L-\xi^2-\eta^2}4,
	\qquad
	(L-2y)\cos x = \frac{\xi^2-\eta^2}2,
	\qquad
	(L-2y)\sin x= \xi \eta,
\end{align*}
one has that
\begin{equation}\label{eq:expressionPoincareHCP1}
\begin{aligned}
	\HH_{\CP, 1}
	= \,	\frac{\rho_1}{L^2}					
	&\sum_{m=0}^2 \Big[
	f_m^0	\DDD_{m,0}(\eta,\Gam,\xi) \left(
	\frac{\xi^2-\eta^2}2 \cos\big((1-m)h\big)
	+
	\xi\eta \sin\big((1-m)h\big) \right)
	\\
	&+ f_m^0	
	\DDD_{m,1}(\eta,\Gam,\xi)
	\left(8L^2+12 L (\xi^2+\eta^2)-3(\xi^2+\eta^2)^2\right)
	\cos(mh)
	\\
	&+ f_m^0	\DDD_{m,2}(\eta,\Gam,\xi) \left(
	\frac{\xi^2-\eta^2}2 \cos\big((1+m)h\big)
	+
	\xi\eta \sin\big((1+m)h\big) \right)
	\Big],
 \end{aligned}
\end{equation}
where the functions $(\DDD_{m,p})_{m,p\in\{0,1,2\}}$ are given in Table~\ref{tab:functionsPoincare}.
From this explicit expression, one can easily see that the coplanar Hamiltonian $\HH_{\CP}=\HH_0+\al^3\HH_{\CP,1}$ (see~\eqref{def:hamiltonianPoincareH0} and~\eqref{eq:expressionPoincareHCP1}) is quadratic with respect to $(\eta,\xi)=(0,0)$.
This implies the following lemma.

\begin{lemma}
	\label{lemma:periodicOrbit}
	The Hamiltonian system given by $\HH_{\CP}(\eta,\Gam,\xi,h)$ in~\eqref{def:Ham:Coplanar} has orbits of the form $(\eta,\Gam,\xi,h)=(0,\Gam(t),0,h(t))$ satisfying that 
	\begin{align*}
		\dot{h} = \partial_{\Gam} \HH_{0}(0,\Gam,0) 
		+ \al^3 \partial_{\Gam} \HH_{\CP, 1}(0,\Gam,0,h),
		\qquad
		\dot{\Gam} = - \al^3 \pa_h\HH_{\CP,1}(0,\Gam,0,h),
	\end{align*}
	where
	\begin{align*}
		\partial_{\Gam} \HH_0(0,\Gam,0) 
		=
		-\frac{3\rho_0}{4 L^8}&(L+2\Gam),
		\\
		\partial_{\Gam} {\HH}_{\CP,1}(0,\Gam,0,h) 
		= -\frac{\rho_1}{8 L^4} &\Big(
		3 U_2^{0,0}(L + 2\Gam)
		+ {4 U_2^{1,0}}
		\frac{L^2-4L\Gam-4\Gam^2}{\sqrt{(L-2\Gam)(3L+2\Gam)}} \cos h
		\\
		&-{U_2^{2,0}}
		(L+2\Gam)\cos(2h) \Big),
		\\[0.4em]
		\partial_h {\HH}_{\CP,1}(0,\Gam,0,h)
		=
		\frac{\rho_1}{16 L^4} &\Big(
		2 U_2^{1,0}
		\sqrt{(L-2\Gam)(3L+2\Gam)}(2\Gam+L) 
		\sin h \\
		&+U_2^{2,0} (L-2\Gam)(3L+2\Gam)\sin(2h) \Big).
	\end{align*}
\end{lemma}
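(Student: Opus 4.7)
The lemma is really two statements bundled together: (i) the plane $\{\eta=\xi=0\}$ is invariant for the flow of $\HH_{\CP}$, so that orbits of the indicated form exist and are governed by the stated reduced system; and (ii) explicit formulas for the partial derivatives appearing in that reduced system. My plan is to address these in turn, both being essentially a direct calculation from the expansions already derived in Appendix~\ref{appendix:HamiltonianPoincare}.

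For (i), I would invoke the observation made just before Lemma~\ref{lemma:periodicOrbit}: the coplanar Hamiltonian $\HH_{\CP}=\HH_0+\alpha^3\HH_{\CP,1}$ has no constant or linear terms in $(\eta,\xi)$ about the origin. This is transparent in $\HH_0$ from the explicit formula~\eqref{def:hamiltonianPoincareH0} (which depends on $(\eta,\xi)$ only through $\eta^2+\xi^2$), and for $\HH_{\CP,1}$ it is transparent from~\eqref{eq:expressionPoincareHCP1}: each of the three inner summands contains either an overall factor $(\xi^2-\eta^2)/2$, $\xi\eta$, or a bracketed polynomial $8L^2+12L(\xi^2+\eta^2)-3(\xi^2+\eta^2)^2$ whose $(\eta,\xi)$-gradient vanishes at the origin. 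Hence $\partial_{\eta}\HH_{\CP}(0,\Gam,0,h)=\partial_{\xi}\HH_{\CP}(0,\Gam,0,h)=0$, so Hamilton's equations force $\dot\eta=\dot\xi=0$ on $\{\eta=\xi=0\}$, which proves invariance. The reduced equations for $(h,\Gam)$ are then just the remaining two Hamiltonian equations evaluated on this invariant plane.

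For (ii), the computation of $\partial_{\Gam}\HH_0(0,\Gam,0)$ follows by plain differentiation of~\eqref{def:hamiltonianPoincareH0} at $\eta=\xi=0$: the relevant numerator reduces to $(2L)^2-24(2L)\Gam-48\Gam^2=4L^2-48L\Gam-48\Gam^2=-48(\Gam^2+L\Gam-L^2/12)$, and dividing by $L^3(2L)^5=32L^8$ and differentiating in $\Gam$ yields the claimed $-\frac{3\rho_0}{4L^8}(L+2\Gam)$. For the derivatives of $\HH_{\CP,1}$, I would evaluate~\eqref{eq:expressionPoincareHCP1} at $\eta=\xi=0$: the first and third summands vanish together with their $\Gam$- and $h$-derivatives (they carry the quadratic factors $(\xi^2-\eta^2)/2$ and $\xi\eta$), so only the $\DDD_{m,1}$ terms survive. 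Since $8L^2+12L(\xi^2+\eta^2)-3(\xi^2+\eta^2)^2\big|_{\eta=\xi=0}=8L^2$, one is reduced to
\[
\HH_{\CP,1}(0,\Gam,0,h)=\frac{8\rho_1}{L^2}\sum_{m=0}^{2}f_{m}^{0}\,\DDD_{m,1}(0,\Gam,0)\cos(mh),
\]
with $\DDD_{m,1}(0,\Gam,0)=D_{m,1}(L/2,\Gam)$ read off from Table~\ref{tab:functionsDelaunay} via $y=(2L-\xi^2-\eta^2)/4$. Plugging in the three values of $D_{m,1}(L/2,\Gam)$ and using $f_m^0=\hat c_m U_2^{m,0}$ gives, after simplification, a sum of the form $c_0(\Gam)+c_1(\Gam)\cos h+c_2(\Gam)\cos(2h)$ whose $\Gam$- and $h$-derivatives match the formulas in the statement.

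The ``hard'' part is really only bookkeeping: tracking the algebraic simplifications in $D_{m,1}(L/2,\Gam)$, checking the signs produced by the constants $\hat c_m$, and verifying the square-root factor $\sqrt{(L-2\Gam)(3L+2\Gam)}$ appears with the right coefficient $4U_2^{1,0}$ after differentiating the $\sqrt{(y-\Gam)(3y+\Gam)}(\Gam+y)$ in $D_{1,1}$ at $y=L/2$. No conceptual ingredient beyond the structural vanishing of the $(\eta,\xi)$-linear part of $\HH_{\CP}$ is needed; the content of the lemma is that this vanishing occurs and that the explicit reduced Hamiltonian is the one obtained by keeping only the $p=1$ slice of Table~\ref{tab:functionsDelaunay}.
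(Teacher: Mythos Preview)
Your proposal is correct and follows exactly the approach indicated in the paper: the text immediately preceding the lemma observes that $\HH_{\CP}$ is quadratic in $(\eta,\xi)$ about the origin and states that ``this implies the following lemma,'' without further detail. Your write-up simply fills in the invariance argument and the bookkeeping from~\eqref{def:hamiltonianPoincareH0}, \eqref{eq:expressionPoincareHCP1} and Table~\ref{tab:functionsDelaunay} that the paper leaves implicit.
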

Analogously, one can proceed for $\RRR_1$, which can be written as
\begin{equation} \label{def:RRR1appendix}
		\RRR_1(\eta,\Gam,\xi,h,\OmegaM) =
		e^{i\OmegaM} \RRR_{1}^+(\eta,\Gam,\xi,h) 
		+ e^{-i\OmegaM} \RRR_{1}^-(\eta,\Gam,\xi,h),
\end{equation}
where
\begin{equation*}
\begin{aligned}
	\RRR_{1}^{\pm}
	= \,	\frac{3\rho_1}{2L^2}					
	&\sum_{m=0}^2 \Big[
	f^{\pm}_m	\DDD_{m,0}(\eta,\Gam,\xi) \left(
	\frac{\xi^2-\eta^2}2 \cos\big((1-m)h\big)
	+
	\xi\eta \sin\big((1-m)h\big) \right)
	\\
	&+ f^{\pm}_m	
	\DDD_{m,1}(\eta,\Gam,\xi)
	\left(8L^2+12 L (\xi^2+\eta^2)-3(\xi^2+\eta^2)^2\right)
	\cos(mh)
	\\
	&+ f^{\pm}_m	\DDD_{m,2}(\eta,\Gam,\xi) \left(
	\frac{\xi^2-\eta^2}2 \cos\big((1+m)h\big)
	+
	\xi\eta \sin\big((1+m)h\big) \right)
	\Big],
 \end{aligned}
\end{equation*}
with
\begin{align*}
{f}_m^{\pm} &= \frac{\hat{c}_m}2 \left( (1\mp i)U_2^{m,1} - (1\pm i)U_2^{m,-1} \right).
\end{align*}
In addition, the harmonics of $\RRR_2$ satisfy that
\begin{equation}\label{def:harmonicsRRR2Appendix}
  \NNN_{\OmegaM}(\RRR_2) = \{0, \pm 2\}.  
\end{equation}

\begin{table}[h]
\begin{equation*}
\def\arraystretch{1.3}
\begin{array}{ccl}
\hline
m & p & \multicolumn{1}{c}{\DDD_{m,p}(\eta,\Gam,\xi)} \\
\hline 
0 		& 	0 		&	
-\frac{15}{128} {L^{-2} (2L-M)^{-2}} 
{(2L-M-4\Gam)(6L-3M+4\Gam)(4L-M)}
\\
\hline
0 		& 	1 		&
\phantom{-}\frac1{128}
{L^{-2} (2L-M)^{-2}}((2L-M)^2-24(2L-M)\Gam -48\Gam^2)
\\
\hline
0 		& 	2 		& 	
-\frac{15}{128}
{L^{-2} (2L-M)^{-2}}
{(2L-M-4\Gam)(6L-3M+4\Gam)(4L-M) }
\\
\hline
1 & 0 &
\phantom{-}\frac{15}{64}
{L^{-2} (2L-M)^{-2}}
{\sqrt{(2L-M-4\Gam)}(6L-3M+4\Gam)^{3/2}(4L-M)}
\\
\hline
1 & 1 &
-\frac{3}{64}
{L^{-2} (2L-M)^{-2}}
{\sqrt{(2L-M-4\Gam)(6L-3M+4\Gam)}(2L-M+4\Gam)} 
\\
\hline
1 & 2 &
-\frac{15}{64}
{L^{-2} (2L-M)^{-2}}
{(2L-M-4\Gam)^{3/2}\sqrt{6L-3M+4\Gam}(4L-M)}
\\
\hline
2 & 0 &
\phantom{-}\frac{15}{64}
{L^{-2} (2L-M)^{-2}}
{(6L-3M+4\Gam)^{2}(4L-M)}
\\
\hline
2 & 1 &
\phantom{-}\frac{3}{64}
{L^{-2} (2L-M)^{-2}}
{(2L-M-4\Gam)(6L-3M+4\Gam)}
\\
\hline
2 & 2 &
\phantom{-}\frac{15}{64}
{L^{-2} (2L-M)^{-2}}
{(2L-M-4\Gam)^{2}(4L-M) }
\\
\hline
\end{array}
\end{equation*}
	\caption{Computation of the functions $(\DDD_{m,p})_{m,p\in\{0,1,2\}}$ with $M:=\xi^2+\eta^2$.}
\label{tab:functionsPoincare}
\end{table}

{\small
\bibliographystyle{abbrv}
\bibliography{references} 
}

\end{document}